\title{A general error analysis for randomized low-rank approximation methods \thanks{\funding{This work was funded by ISAE-SUPAERO.}}}
\author{Youssef Diouane\thanks{Department of Mathematics and Industrial Engineering, Polytechnique Montr\'eal, QC, Canada
  (\email{youssef.diouane@polymtl.ca}).}
\and Selime G\"urol\thanks{CERFACS, 42 Avenue Gaspard Coriolis, F-31057 Toulouse Cedex 01, France
  (\email{selime.gurol@cerfacs.fr}).}
\and Alexandre Scotto di Perrotolo\thanks{Universit\'e de Toulouse, INSA-Toulouse, 135 avenue de Rangueil, F-31400 Toulouse, France
  (\email{scottodi@insa-toulouse.fr}).}
\and Xavier Vasseur\thanks{Universit\'e de Toulouse, ISAE-SUPAERO, 10 avenue Edouard Belin, BP~ 54032, F-31055 Toulouse Cedex 4, France
	(\email{xavier.vasseur@isae-supaero.fr}).}
}
\begin{document}

\maketitle

\begin{abstract}
    We propose a general error analysis related to the low-rank approximation of a given real matrix in both the spectral and Frobenius norms. First, we derive deterministic error bounds that hold with some minimal assumptions. Second, we derive error bounds in expectation in the non-standard Gaussian case, assuming a non-trivial mean and a general covariance matrix for the random matrix variable. The proposed analysis generalizes and improves the error bounds for spectral and Frobenius norms proposed by Halko, Martinsson and Tropp. Third, we consider the Randomized Singular Value Decomposition and specialize our error bounds in expectation in this setting. Numerical experiments on an instructional synthetic test case demonstrate the tightness of the new error bounds.
\end{abstract}

\begin{keywords}
Low-rank approximation, randomized algorithms, Singular Value Decomposition.
\end{keywords}

\begin{AMS}
15A18, 60B20, 68W20
\end{AMS}

\section{Introduction}

Low-rank approximation of large-scale matrices is a key ingredient in numerous applications in data analysis and scientific computing. These applications include   principal component analysis~\cite{RokhlinSzlamEtAl_RandomizedAlgorithmPrincipal_2010}, data compression~\cite{Mahoney_RandomizedAlgorithmsMatrices_2010} and approximation algorithms for partial differential and integral equations~\cite{Hackbusch_HierarchicalMatricesAlgorithms_2015}, to name a few.  Namely, let $A \in \Rnm$ be a rectangular matrix satisfying $n \geq m$ and $Z \in \Rnp$ any full column rank random matrix (with $p \leq \rank(A) \leq m$). If $\pi(Z)$ denotes the orthogonal projection onto the range of $Z$ and $I_n \in \Rnn$ the identity matrix of order $n$, we aim at analyzing the general quantity of interest
\begin{equation} \label{eq:general_error_term}
    \norm{[\I_n - \pi(Z)] A}_{2, F},
\end{equation}
where $\norm{.}_{2,F}$ represents a shortcut for either the spectral norm ($\norm{.}_{2}$) or the Frobenius norm ($\norm{.}_{F}$), respectively.

In our analysis, we will consider the standard Singular Value Decomposition (SVD) of $A$, i.e.,  $A = U \Sigma V\tra $, where $U \in \Rnn$ and $V \in\Rmm$ are orthogonal matrices containing the left and right singular vectors of $A$ respectively and $\Sigma \in \Rnm$ is a diagonal matrix containing the singular values of $A$ denoted as $\sigma_1, \dots, \sigma_m$ (sorted in decreasing order, i.e. $\sigma_1 \geq \sigma_2 \geq \dots \geq \sigma_m \geq 0$). For a target rank $k \in \set{1, \dots, \rank{(A)}}$, which is assumed to be much smaller than $n$, a central decomposition appearing in the error analysis is then given by
\begin{equation} \label{eq:partitioning_svd}
    A =
    \begin{bmatrix}
        U_k & \ubar{U}_k
    \end{bmatrix}
    \begin{bmatrix}
        \Sigma_k & \\
        & \ubar{\Sigma}_k
    \end{bmatrix}
    \begin{bmatrix}
        V_k\tra \\
        \ubar{V}_k\tra
    \end{bmatrix},
\end{equation}
with $U_k \in \Rnk$, $\ubar{U}_k \in \R^{n \times (n-k)}$, $V_k \in \Rmk$, $\ubar{V}_k \in \R^{m \times (m-k)}$, $\Sigma_k \in \Rkk$ and $\ubar{\Sigma}_k \in \R^{(n-k) \times (m-k)}$. We also set $A_k = U_k \Sigma_k V_k\tra $ and  $\ubar{A}_k = \ubar{U}_k \ubar{\Sigma}_k \ubar{V}_k\tra$ so that $A = A_k + \ubar{A}_k.$

The Eckart-Young theorem~\cite{EckartYoung_ApproximationOneMatrix_1936} states that the optimal rank-$k$ approximation of $A$ is given by $A_k = \pi(U_k) A$. In practice, for large-scale applications, computing $U_k$ can be  computationally challenging or too expensive. In this context, randomized algorithms for approximating $U_k$ have become increasingly popular~\cite{HalkoMartinssonEtAl_FindingStructureRandomness_2011, MartinssonTropp_RandomizedNumericalLinear_2020} in the past few years. They have been proved to be easy to implement, computationally efficient and numerically robust. The general idea of randomized subspace iteration methods is to use random sampling to identify a subspace that approximates the range of a given matrix. In this manuscript, we propose a general error analysis related to the low-rank approximation to a given real matrix in the spectral and Frobenius norms.

\subsubsection*{Related research}

The error analysis of randomized algorithms for low-rank approximation to a given matrix has been extensively considered in the literature; see, e.g., the survey papers~\cite{HalkoMartinssonEtAl_FindingStructureRandomness_2011, Mahoney_RandomizedAlgorithmsMatrices_2010, MartinssonTropp_RandomizedNumericalLinear_2020, Woodruff_SketchingToolNumerical_2014} for a general overview. This theoretical analysis takes into account the distribution of the random matrix to derive either error bounds in expectation or tail bounds of the error distribution. The case of standard Gaussian matrices is usually considered, even though alternative theoretical results exist, based on either random column selection matrices~\cite{BoutsidisMahoneyEtAl_ImprovedApproximationAlgorithm_2009} or Subsampled Random Fourier Transform matrices~\cite{HalkoMartinssonEtAl_FindingStructureRandomness_2011}. Halko, Martinsson and Tropp  have developed a reference error analysis in expectation~\cite[Theorems 10.5 and 10.6]{HalkoMartinssonEtAl_FindingStructureRandomness_2011} in the Frobenius and spectral norms for $Z = A \Omega$, $\Omega$ being a standard Gaussian matrix. Later, Gu has refined these error bounds~\cite[Theorem 5.7]{Gu_SubspaceIterationRandomization_2015}.
More recently, Saibaba~\cite{Saibaba_RandomizedSubspaceIteration_2019} has proposed a complementary average case error analysis in terms of principal angles between appropriate subspaces that is available in any unitarily invariant norm.



\subsubsection*{Motivations}

In certain situations, a priori knowledge of $Z$ (or of the corresponding projection subspace) may be available. This naturally arises when considering, e.g., the solution of large-scale nonlinear systems of equations, requiring the solution of a sequence of linear systems of equations. Approximate spectral information based on Ritz or Harmonic Ritz vectors can be easily retrieved to form such a subspace. Exploiting this a priori knowledge is thus of primary interest to design fast, robust and efficient low-rank approximation algorithms. Hence we aim at developing a general theoretical error analysis of randomized algorithms for the low-rank approximation, assuming the existence of a non-trivial mean and of a general covariance matrix for $Z$.

\subsubsection*{Contributions} In this manuscript, for a given target rank, we propose to analyze theoretically the low-rank approximation to a given matrix in both the spectral and Frobenius norms, when $Z$ is drawn from a non-standard Gaussian distribution. Namely, we will first derive in Theorems~\ref{thm:1} and~\ref{thm:2} deterministic error bounds that hold with some minimal assumptions. Second, we will derive error bounds in expectation in the non-standard Gaussian case (with a non-trivial mean value and a general covariance matrix) in Theorems~\ref{thm:3},~\ref{thm:4} and~\ref{thm:5}, respectively. Our analysis simultaneously generalizes and improves the error bounds for spectral and Frobenius norms proposed in~\cite{HalkoMartinssonEtAl_FindingStructureRandomness_2011}. We specialize our error bounds to the Randomized Singular Value Decomposition (RSVD) in Corollaries~\ref{thm:3:rsvd},~\ref{thm:4:rsvd} and~\ref{thm:5:rsvd}, respectively and provide numerical experiments on a synthetic test case that illustrate the tightness of the obtained error bounds.

\subsubsection*{Structure of the manuscript}  Section~\ref{sec:1} introduces specific results useful later in our analysis. Section~\ref{sec:2}  details our error analysis related to the low-rank approximation of the matrix $A$. First, in Section~\ref{sec:2:sub:1}, we derive deterministic error bounds that hold with some minimal assumptions. Then, in Section~\ref{sec:2:sub:2}, we derive error bounds in expectation, with $Z$ drawn following a non-standard Gaussian distribution. In Section~\ref{sec:3},  we specialize our error bounds to the Randomized Singular Value Decomposition. In Section~\ref{sec:4}, we provide detailed numerical illustrations, including a broad comparison with reference error bounds. Conclusions are finally drawn in Section~\ref{sec:5}.

\section{Preliminaries} \label{sec:1}

We first introduce notation used throughout the manuscript and remind specific results. \\




\paragraph{Gaussian matrix}
We use the notation $x \sim \mathcal{N}(\mu, \sigma^2)$ to define a scalar random variable $x$ that follows a Gaussian distribution with mean $\mu$ and standard deviation $\sigma$. A Gaussian matrix $M \in \Rmp$ is a matrix whose columns are independent $m$-dimensional Gaussian vectors, that is $M = [m_1,~\dots,~m_p]$ with $m_i \in \Rm$ for $1 \leq i \leq p$. If $m_i \sim \mathcal{N}(\widehat{m}_i, C)$ where $\widehat{m}_i \in \Rm$ and $C \in \Rmm$, then  we write
\begin{equation} \label{eq:distribution_gaussian_matrix}
    M \sim \mathcal{N}(\widehat{M}, C) \quad \txtwith \quad
    \widehat{M} = [\widehat{m}_1,~\dots,~\widehat{m}_p] \in \Rmp.
\end{equation}
Hence the covariance matrix of the random matrix variable $M$, later explicitly denoted by $\Covmat(M)$, is equal to $C$. An equivalent formulation~\cite[Chapter 5]{Gut_IntermediateCourseProbability_2009} also reads
\begin{equation} \label{eq:gaussian_matrix_relation}
   M = \widehat{M} + \Covmat(M) \half G \quad \txtwith \quad G \sim \mathcal{N}(0, \I_m),
\end{equation}

where $\Covmat(M) \half$ refers either to the positive definite square root of $\Covmat(M)$ if $\Covmat(M)$ is positive definite or to the unique positive semidefinite square root of $\Covmat(M)$ if $\Covmat(M)$ is positive semidefinite~\cite[Theorem 7.2.6]{HornJohnson_MatrixAnalysis_2012}, respectively. \\

\paragraph{Partial ordering on the set of symmetric matrices}
Let $M \in\Rmm, N \in\Rmm$ be two symmetric matrices. The notation $M \lleq N$ means that $N-M$ is positive semidefinite. It defines a partial ordering on the set of symmetric matrices~\cite[Section 7.7]{HornJohnson_MatrixAnalysis_2012}. An  important property is that the partial ordering is preserved under the conjugation rule, i.e.,
\begin{equation} \label{eq:conjugation_rule}
    M \lleq N \implies Q\tra M Q \lleq Q\tra N Q, \quad \forall Q \in \Rmn.
\end{equation}
We note that, in particular, for all $M \in \Rmn$ and $N \in \Rmn$ such that $M\tra M \lleq N\tra N$, we have $\norm{M}_{2,F} \le \norm{N}_{2,F}$. \\

\paragraph{Projection matrices}
Suppose that $M \in \Rnm$ has full column rank with column range space denoted by $\range(M)$. We denote by  $M\pinv$ the left multiplicative inverse of $M$, i.e., the Moore-Penrose inverse of $M$, see, e.g.,~\cite{HornJohnson_MatrixAnalysis_2012}. The orthogonal projection on $\range(M)$ is then given by $\pi(M) = M M\pinv$, in particular, one has $\range(\pi(M)) = \range(M)$. \\

\paragraph{Sherman-Morrison formula} Let $M \in \Rmn$ and $ N \in\Rnm$ such that $\I_n + NM$ is non-singular. Then, $\I_m + MN$ is also non-singular and $(\I_m + MN)\inv = \I_m - M(\I_n + NM)\inv N$ \cite[Section 2.1.4]{GolubVanLoan_MatrixComputations_1996}. \\

\paragraph{Strong submultiplicativity of Frobenius and spectral norms}
The strong submultiplicativity of the spectral and Frobenius norms~\cite[Relation(B.7)]{Higham_FunctionsMatricesTheory_2008} reads
\begin{equation} \label{eq:submultiplicativity}
    \forall M \in \Rnp, \forall  N \in \R^{p \times q}, \forall  Q \in \R^{q \times m},
    \norm{M N Q}_{2,F} \leq \norm{M}_2 ~\norm{N}_{2,F} ~\norm{Q}_2.
\end{equation}

\section{Error bounds for the low-rank approximation of a matrix} \label{sec:2}

Our main objective is to derive error bounds related to an approximation of rank $k$ to $A$ using the orthogonal projection $\pi(Z)$ where $Z \in \Rnp$ and $k \in \{1, \ldots, p\}$, $p \le \rank(A)$. First, in Section~\ref{sec:2:sub:1}, we consider the general case with the minimal assumption that $Z$ is full column rank.  In this case, we are able to derive deterministic error bounds using a systematic approach. Second, in Section~\ref{sec:2:sub:2}, we focus on error bounds that are tractable now from a stochastic point of view, where we assume that the matrix $Z$ corresponds to a general Gaussian matrix. Namely, $Z \in \Rnp$ will be drawn as a Gaussian full column rank matrix of mean $\widehat{Z} \in \Rnp$ and covariance matrix $\Covmat(Z)$, that is, $Z \sim \mathcal{N}(\widehat{Z}, \Covmat(Z))$. In this case, we develop an error analysis in expectation with respect to the random variable $Z$.


\subsection{Deterministic analysis} \label{sec:2:sub:1}

 Without loss of generality, we aim at deriving error bounds for the following quantity
\begin{equation} \label{eq:general_metric}
   \norm{[\I_n - \pi(Z)] A}^2_{2, F}- \norm{[\I_n - \pi(Z)]\ubar{A}_k}^2_{2, F},
\end{equation}

\noindent where  $Z \in \Rnp$ is a full column rank matrix (with $p \le \rank(A)$). Since $\norm{[\I_n - \pi(Z)]\ubar{A}_k}_{2,F}^2 \leq \norm{\ubar{A}_k}_{2,F}^2$, we note that~\eqref{eq:general_metric} is an upper bound  of
\begin{equation} \label{eq:halko_metric}
    \norm{[\I_n - \pi(Z)] A}^2_{2, F}- \norm{\ubar{A}_k}^2_{2, F},
\end{equation}
a quantity which is frequently considered in the analysis of low-rank approximation methods, see, e.g.,~\cite{HalkoMartinssonEtAl_FindingStructureRandomness_2011}. In this sense, our error bounds will naturally cover existing bounds from the literature. The next lemma will be helpful to derive the deterministic error bound for~\eqref{eq:general_metric}.
\begin{lemma}  \label{lem:1}
    Let $A \in \Rnm$ such that $n \geq m$ and $Z \in \Rnp$ a full column rank matrix with $p \leq \rank(A)$. For a given $k \in \{ 1, \dots,p\}$, set $\Omega_k = U_k\tra Z \in \R^{k \times p}$ and $\ubar{\Omega}_k = \ubar{U}_k\tra Z \in \R^{(n-k) \times p}$ such that $\Omega_k$ is full row rank (i.e., $\rank(\Omega_k) = k$).
    Then, one has
    \begin{equation} \label{eq:sine_bound_yd}
        U_k\tra [\I_n - \pi(Z)] U_k \quad \lleq \quad S_k\tra S_k \quad \lleq \quad T_k\tra T_k,
    \end{equation}
    with $T_k = \ubar{\Omega}_k \Omega_k\pinv \in \R^{(n-k) \times k}$ and $S_k = (\I_{n-k} + T_k T_k\tra )\halfinv T_k \in \R^{(n-k) \times k}$.
\end{lemma}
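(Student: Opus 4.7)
The plan is to prove the two inequalities separately, as they are of quite different nature: the right one is a purely algebraic consequence of Loewner monotonicity, whereas the left one requires first replacing $Z$ by a carefully chosen matrix with a smaller, more transparent range.

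For the right-hand inequality $S_k\tra S_k \lleq T_k\tra T_k$, I would start from the definition $S_k\tra S_k = T_k\tra (\I_{n-k} + T_k T_k\tra)\inv T_k$. Since $T_k T_k\tra \succeq 0$, we have $\I_{n-k} \lleq \I_{n-k} + T_k T_k\tra$, and operator antitonicity of matrix inversion on positive definite matrices yields $(\I_{n-k} + T_k T_k\tra)\inv \lleq \I_{n-k}$. Applying the conjugation rule~\eqref{eq:conjugation_rule} with $T_k$ then gives $S_k\tra S_k \lleq T_k\tra T_k$.

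For the left-hand inequality, the key idea is to introduce the auxiliary matrix $W := Z \Omega_k\pinv \in \Rnk$. The assumption that $\Omega_k$ has full row rank ensures $\Omega_k \Omega_k\pinv = \I_k$, and since $[U_k, \ubar{U}_k]$ is orthogonal the decomposition $Z = U_k \Omega_k + \ubar{U}_k \ubar{\Omega}_k$ yields
\begin{equation*}
    W \;=\; U_k + \ubar{U}_k T_k,
\end{equation*}
so that $U_k\tra W = \I_k$ and $W\tra W = \I_k + T_k\tra T_k$ is invertible. Because $\range(W) \subseteq \range(Z)$, the standard identity $\pi(Z) \pi(W) = \pi(W)$ gives $(\pi(Z) - \pi(W))^2 = \pi(Z) - \pi(W)$, hence $\pi(W) \lleq \pi(Z)$; conjugating $\I_n - \pi(Z) \lleq \I_n - \pi(W)$ by $U_k$ yields $U_k\tra [\I_n - \pi(Z)] U_k \lleq U_k\tra [\I_n - \pi(W)] U_k$. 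A direct computation using $U_k\tra W = \I_k$ then gives $U_k\tra \pi(W) U_k = (\I_k + T_k\tra T_k)\inv$, so $U_k\tra [\I_n - \pi(W)] U_k = \I_k - (\I_k + T_k\tra T_k)\inv$.

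The final step is to verify that the last expression coincides with $S_k\tra S_k$. Applying the Sherman-Morrison formula to $(\I_{n-k} + T_k T_k\tra)\inv$ and sandwiching with $T_k\tra$ and $T_k$, or equivalently using the push-through identity $T_k\tra(\I_{n-k} + T_k T_k\tra)\inv = (\I_k + T_k\tra T_k)\inv T_k\tra$, gives
\begin{equation*}
    S_k\tra S_k \;=\; T_k\tra (\I_{n-k} + T_k T_k\tra)\inv T_k \;=\; (\I_k + T_k\tra T_k)\inv T_k\tra T_k \;=\; \I_k - (\I_k + T_k\tra T_k)\inv,
\end{equation*}
which closes the chain. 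The main obstacle is guessing the right auxiliary matrix $W$: once $W = U_k + \ubar{U}_k T_k$ is identified, every subsequent manipulation is essentially forced, and translating between the two equivalent Sherman-Morrison forms of $S_k\tra S_k$ is then routine.
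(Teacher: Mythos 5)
Your proof is correct and follows essentially the same route as the paper: your auxiliary matrix $W = Z\Omega_k\pinv$ is exactly the paper's $\bar{Z}_k$, and both arguments proceed by projection monotonicity on the smaller range, conjugation by $U_k$, and a Sherman--Morrison manipulation to identify $\I_k - (\I_k + T_k\tra T_k)\inv$ with $S_k\tra S_k$. The only cosmetic difference is that you prove $\pi(W) \lleq \pi(Z)$ directly via the idempotency of $\pi(Z) - \pi(W)$ rather than citing the corresponding result from Halko, Martinsson and Tropp.
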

\begin{proof}
    By assumption, $\Omega_k$ has full row rank and therefore has a right multiplicative inverse $\Omega_k\pinv$. Hence $\bar{Z}_k = Z \Omega_k\pinv$ satisfies the two relations
    \begin{equation*}
        U_k\tra \bar{Z}_k = \I_k \quad \txtand \quad \ubar{U}_k\tra \bar{Z}_k = T_k.
    \end{equation*}
    Moreover, we have $\range(\bar{Z}_k) \subset \range(Z)$. Hence, by applying~\cite[Proposition 8.5]{HalkoMartinssonEtAl_FindingStructureRandomness_2011}, one gets $\I_n - \pi(Z) \lleq \I_n - \pi(\bar{Z}_k)$.  By using the conjugation rule (\ref{eq:conjugation_rule}) and the identity $U_k U_k\tra + \ubar{U}_k\ubar{U}_k\tra = \I_n$, we obtain
    \begin{align*}
        U_k\tra [\I_n - \pi(Z)] U_k \lleq U_k\tra [\I_n - \pi(\bar{Z}_k)] U_k & = U_k\tra \left(\I_n - \bar{Z}_k (\bar{Z}_k\tra \bar{Z}_k)\inv \bar{Z}_k\tra \right) U_k, \\
        & = \I_k - U_k\tra \bar{Z}_k (\bar{Z}_k\tra \bar{Z}_k)\inv \bar{Z}_k\tra U_k, \\
        & = \I_k - (\bar{Z}_k\tra \bar{Z}_k)\inv, \\
        & = \I_k - \left(\bar{Z}_k\tra (U_k U_k\tra + \ubar{U}_k \ubar{U}_k\tra)\bar{Z}_k\right)\inv, \\
        & = \I_k - \left(\I_k + T_k\tra T_k \right)\inv, \\
        & = T_k\tra \left(\I_{n-k} + T_k T_k\tra \right)\inv T_k,
    \end{align*}
    where the last equality is obtained using the Sherman-Morrison formula. Then we observe that $T_k\tra \left(\I_{n-k} + T_k T_k\tra \right)\inv T_k = S_k\tra S_k$ and $\left(\I_{n-k} + T_k T_k\tra \right)\inv \lleq \I_{n-k}$. We
    conclude the proof by applying the conjugation rule to deduce $S_k\tra S_k \lleq T_k\tra T_k$.
\end{proof}


\begin{remark}
    In the particular case of $Z= A \Omega$ ($\Omega \in \Rnp$), we have $\Omega_k = \Sigma_k (V_k\tra \Omega)$,  $\ubar{\Omega}_k = \ubar{\Sigma}_k (\ubar{V}_k\tra \Omega)$ and $T_k = \ubar{\Sigma}_k (\ubar{V}_k\tra \Omega) (V_k\tra \Omega)^{\pinv} \Sigma_k^{\inv}$. With the notation of~\cite{HalkoMartinssonEtAl_FindingStructureRandomness_2011}, this gives $\Omega_k := \Sigma_1 \Omega_1$, $\ubar{\Omega}_k := \Sigma_2 \Omega_2$ and $T_k := \Sigma_2 \Omega_2 \Omega_1^{\pinv} \Sigma_1^{\inv}$, respectively. We therefore point out that our notation is related but not directly equivalent to the one employed in~\cite{HalkoMartinssonEtAl_FindingStructureRandomness_2011}.
\end{remark}

\begin{remark}
    We note that the positive singular values of $T_k$ represent the {\it{tangent}} of the canonical angles between $\range(Z \Omega_k\pinv)$ and $\range(U_k)$~\cite[Theorem 3.1 and Remark 3.1]{ZhuKnyazev_AnglesSubspacesTheir_2013}, while the positive singular values of $S_k$ represent the {\it{sine}} of the canonical angles between the same subspaces. Hence we stress this information in the notation and refer the reader to~\cite{Saibaba_RandomizedSubspaceIteration_2019} for a theoretical analysis of low-rank approximation methods in terms of subspace angles.
\end{remark}


\noindent The next theorem introduces unified deterministic error bounds for the quantity of interest~\eqref{eq:general_metric}.

\begin{theorem}(Deterministic error bounds in Frobenius and spectral norms) \label{thm:1}
    Let $A \in \Rnm$ such that $n \geq m$ and $Z \in \Rnp$ a full column rank matrix with $p \leq \rank(A)$. For a given $k \in \{ 1, \dots,p\}$, set $\Omega_k = U_k\tra Z \in \R^{k \times p}$ and $\ubar{\Omega}_k = \ubar{U}_k\tra Z \in \R^{(n-k) \times p}$ such that $\Omega_k$ is full row rank (i.e., $\rank(\Omega_k) = k$). Then, one has
    \begin{equation} \label{deterministic_bound_F_2}
        \norm{[\I_n - \pi(Z)] A}_{2,F}^2 - \norm{[\I_n - \pi(Z)]\ubar{A}_k}_{2,F}^2 \leq \norm{[\I_n - \pi(Z)] A_k}_{2,F}^2 \leq
        \min \set{\norm{S_k}_{2,F}^2 \norm{\Sigma_k}_2^2,~ \norm{T_k \Sigma_k}_{2,F}^2},
    \end{equation}
    where $T_k = \ubar{\Omega}_k \Omega_k\pinv \in \R^{(n-k) \times k}$ and $S_k = (\I_{n-k} + T_k T_k\tra)\halfinv T_k \in \R^{(n-k) \times k}$.
\end{theorem}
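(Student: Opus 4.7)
The plan is to handle the two inequalities in~\eqref{deterministic_bound_F_2} separately, uniformly across both the spectral and Frobenius norms. Throughout, set $P = \I_n - \pi(Z)$, which is an orthogonal projector, so $P\tra P = P^2 = P$.

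For the first inequality, the key observation is the block-orthogonality built into~\eqref{eq:partitioning_svd}: since $U_k\tra \ubar{U}_k = 0$, one has $A_k \ubar{A}_k\tra = 0$ and $\ubar{A}_k A_k\tra = 0$, which yields the clean identity $AA\tra = A_k A_k\tra + \ubar{A}_k \ubar{A}_k\tra$. Multiplying on both sides by $P$, I would write
\begin{equation*}
    (PA)(PA)\tra \;=\; P A_k A_k\tra P + P \ubar{A}_k \ubar{A}_k\tra P,
\end{equation*}
which is the sum of two positive semidefinite matrices. Taking the trace produces the Frobenius identity $\norm{PA}_F^2 = \norm{PA_k}_F^2 + \norm{P\ubar{A}_k}_F^2$; for the spectral norm, the subadditivity $\norm{X+Y}_2 \leq \norm{X}_2 + \norm{Y}_2$ valid on positive semidefinite matrices gives $\norm{PA}_2^2 \leq \norm{PA_k}_2^2 + \norm{P\ubar{A}_k}_2^2$. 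Either way, rearranging delivers the first inequality.

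For the second inequality, the strategy is to reduce $\norm{PA_k}_{2,F}$ to a quantity directly controlled by Lemma~\ref{lem:1}. Since $V_k\tra$ has orthonormal rows, right-multiplication by $V_k\tra$ preserves both norms, so $\norm{PA_k}_{2,F} = \norm{PU_k\Sigma_k}_{2,F}$. I would then compute $(PU_k\Sigma_k)\tra (PU_k\Sigma_k) = \Sigma_k U_k\tra P U_k \Sigma_k$ and apply Lemma~\ref{lem:1} together with the conjugation rule~\eqref{eq:conjugation_rule} with $Q = \Sigma_k$ to obtain
\begin{equation*}
    (PU_k\Sigma_k)\tra (PU_k\Sigma_k) \;\lleq\; (S_k \Sigma_k)\tra (S_k \Sigma_k) \;\lleq\; (T_k \Sigma_k)\tra (T_k \Sigma_k).
\end{equation*}
The criterion $M\tra M \lleq N\tra N \implies \norm{M}_{2,F} \leq \norm{N}_{2,F}$ recalled in Section~\ref{sec:1} then gives $\norm{PA_k}_{2,F} \leq \norm{S_k \Sigma_k}_{2,F}$ and $\norm{PA_k}_{2,F} \leq \norm{T_k \Sigma_k}_{2,F}$, and strong submultiplicativity~\eqref{eq:submultiplicativity} produces $\norm{S_k \Sigma_k}_{2,F} \leq \norm{S_k}_{2,F}\,\norm{\Sigma_k}_2$. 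Taking the minimum of the two resulting bounds closes the argument.

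The delicate step I expect is the spectral-norm case of the first inequality. For the Frobenius norm the decomposition of $(PA)(PA)\tra$ into two PSD summands yields an exact Pythagorean identity via the trace, but for the spectral norm one must argue through subadditivity on positive semidefinite matrices rather than an equality. The key maneuver that makes both cases go through in a single stroke is expressing everything through $AA\tra$ rather than $A\tra A$, since only then do the cross-terms vanish (thanks to $U_k\tra \ubar{U}_k = 0$), leaving a sum of two PSD blocks on which the inequality $\norm{X+Y}_2 \leq \norm{X}_2 + \norm{Y}_2$ can be safely invoked.
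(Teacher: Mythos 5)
Your proposal is correct and follows essentially the same route as the paper's own proof: the decomposition $AA\tra = A_k A_k\tra + \ubar{A}_k\ubar{A}_k\tra$ with trace additivity (Frobenius) versus subadditivity of the spectral norm on positive semidefinite summands, followed by unitary invariance, Lemma~\ref{lem:1} conjugated by $\Sigma_k$, and strong submultiplicativity. The only cosmetic difference is that you invoke the criterion $M\tra M \lleq N\tra N \implies \norm{M}_{2,F}\leq\norm{N}_{2,F}$ where the paper passes through $\norm{P U_k \Sigma_k}^2_{2}=\norm{\Sigma_k\tra U_k\tra P U_k \Sigma_k}_{2}$ and the trace, which is the same argument in substance.
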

\begin{proof}
    Using the identity $AA\tra= A_k A_k\tra +\ubar{A}_k\ubar{A}_k\tra$, we obtain for the spectral norm case
    \begin{align*}
        \norm{[\I_n - \pi(Z)] A}_2^2 & = \norm{[\I_n - \pi(Z)] A A\tra [\I_n - \pi(Z)]}_2, \\
        & = \norm{[\I_n - \pi(Z)] [A_k A_k\tra +\ubar{A}_k\ubar{A}_k\tra] [\I_n - \pi(Z)]}_2.
    \end{align*}
    Hence we obtain
    \begin{align*}
        \norm{[\I_n - \pi(Z)] A}_2^2
        & \leq \norm{[\I_n - \pi(Z)]A_k}_2^2 +  \norm{[\I_n - \pi(Z)]\ubar{A}_k}_2^2.
    \end{align*}
    Thus, by using the unitarily invariance of the spectral norm, we get
    \begin{equation} \label{rel:thm1:1}
        \norm{[\I_n - \pi(Z)] A}_2^2 - \norm{[\I_n - \pi(Z)]\ubar{A}_k}_2^2 \leq \norm{[\I_n - \pi(Z)] U_k \Sigma_k}_2^2.
    \end{equation}
    Moreover, using Lemma~\ref{lem:1} and the conjugation rule, we get
    \begin{equation} \label{rel:thm1:2}
        \Sigma_k\tra U_k\tra  [\I_n - \pi(Z)] U_k \Sigma_k \lleq \Sigma_k\tra S_k\tra S_k \Sigma_k \lleq \Sigma_k T_k\tra T_k \Sigma_k.
    \end{equation}
    Hence, since $[\I_n - \pi(Z)]^2=\I_n - \pi(Z)$, we obtain
    \begin{equation*}
        \norm{[\I_n - \pi(Z)] U_k \Sigma_k}^2_2 = \norm{\Sigma_k\tra U_k\tra [\I_n - \pi(Z)] U_k \Sigma_k}_2
        \leq \norm{\Sigma_k\tra S_k\tra S_k \Sigma_k}_2
        \leq \norm{\Sigma_k\tra T_k\tra T_k \Sigma_k}_2,
    \end{equation*}
    or equivalently,
    \begin{equation*}
        \norm{[\I_n - \pi(Z)] U_k \Sigma_k}^2_2 \leq \norm{S_k \Sigma_k}_2^2 \leq \norm{T_k \Sigma_k}_2^2.
    \end{equation*}
    Using the strong submultiplicativity of the spectral norm, the latter inequality implies
    \begin{equation} \label{rel:thm1:3}
        \norm{[\I_n - \pi(Z)] U_k \Sigma_k}^2_2 \leq \min \set{ \norm{S_k}_2^2 \norm{\Sigma_k}_2^2,~ \norm{T_k \Sigma_k}_2^2}.
    \end{equation}
    Combining~\eqref{rel:thm1:1} and~\eqref{rel:thm1:3} completes the proof for the spectral norm case.

    \noindent For the Frobenius norm, similar arguments are used. In fact, we have
    \begin{align*}
        \norm{[\I_n - \pi(Z)] A}_F^2 & = \tr\left( [\I_n - \pi(Z)] A A\tra [\I_n - \pi(Z)]\right), \\
        & = \tr\left( [\I_n - \pi(Z)] [A_k A_k\tra + \ubar{A}_k\ubar{A}_k\tra] [\I_n - \pi(Z)] \right), \\
        & = \tr\left( [\I_n - \pi(Z)] A_k A_k\tra [\I_n - \pi(Z)] \right) + \tr\left( [\I_n - \pi(Z)]\ubar{A}_k\ubar{A}_k\tra [\I_n - \pi(Z)] \right), \\
        & = \norm{[\I_n - \pi(Z)]A_k}_F^2 + \norm{[\I_n - \pi(Z)] \ubar{A}_k}_F^2.
    \end{align*}
     Thus,
    \begin{equation} \label{rel:thm1:4}
        \norm{[\I_n - \pi(Z)] A}_F^2 - \norm{[\I_n - \pi(Z)]\ubar{A}_k}_F^2 = \norm{[\I_n - \pi(Z)] U_k \Sigma_k}_F^2.
    \end{equation}
    Using~\eqref{rel:thm1:2}, we obtain
    \begin{equation*}
        \norm{[\I_n - \pi(Z)] U_k \Sigma_k}^2_F = \tr\left( \Sigma_k\tra U_k\tra [\I_n - \pi(Z)] U_k \Sigma_k \right)
        \leq \tr\left( \Sigma_k\tra S_k\tra S_k \Sigma_k \right)
        \leq \tr\left( \Sigma_k\tra T_k\tra T_k \Sigma_k \right).
    \end{equation*}
    Hence we obtain
    \begin{equation} \label{rel:thm1:5}
        \norm{[\I_n - \pi(Z)] U_k \Sigma_k}^2_F \leq \min \set{ \norm{S_k}_F^2 \norm{\Sigma_k}_2^2,~ \norm{T_k \Sigma_k}_F^2}.
    \end{equation}
    Combining relations~\eqref{rel:thm1:4} and~\eqref{rel:thm1:5} completes the proof for the Frobenius norm case.
\end{proof}


\begin{remark}
    We have shown in Theorem~\ref{thm:1} that
    \begin{equation}
        \norm{[\I_n - \pi(Z)] A}_{2,F}^2 - \norm{[\I_n - \pi(Z)]\ubar{A}_k}_{2,F}^2 \leq
        \norm{S_k \Sigma_k}_{2,F}^2 \leq \norm{T_k \Sigma_k}_{2,F}^2.
    \end{equation}
    Hence,  $\norm{S_k \Sigma_k}_{2,F}^2$ definitively represents a sharper upper bound in the deterministic case. Nevertheless we have decided to use the submultiplicativity to bound $\norm{S_k \Sigma_k}_{2,F}^2$, since only this formulation authorizes a possible treatment in the stochastic setting as detailed in Section~\ref{sec:2:sub:2}.
\end{remark}

\begin{remark}
    In the particular case of $Z = A \Omega$ ($\Omega \in \Rnp$), with the notation of~\cite{HalkoMartinssonEtAl_FindingStructureRandomness_2011}, we have $\norm{T_k \Sigma_k}_{2,F} := \norm{\Sigma_2 \Omega_2 \Omega_1^{\pinv}}_{2,F}$, a quantity which precisely appears in the reference upper bound~\cite[Theorem 9.1]{HalkoMartinssonEtAl_FindingStructureRandomness_2011}. We note that our error bound~\eqref{deterministic_bound_F_2} is tighter whenever $\norm{S_k}_{2,F}^2 \norm{\Sigma_k}_2^2 <  \norm{T_k \Sigma_k}_{2,F}^2$. Hence Theorem~\ref{thm:1} either recovers or improves the reference error bound~\cite[Theorem 9.1]{HalkoMartinssonEtAl_FindingStructureRandomness_2011} in this setting.
\end{remark}

\begin{remark}
    Using $\norm{[\I_n - \pi(Z)]\ubar{A}_k}_{2,F}^2 \leq \norm{\ubar{A}_k}_{2,F}^2$ and  $\sqrt{a^2+b^2} \leq a+b$ for any real positive scalars $a$ and $b$, we note that Theorem~\ref{thm:1} implies~\cite[Theorem 3.3]{DrineasIpsen_LowRankMatrixApproximations_2019} when stated in the spectral and Frobenius norms. As~\cite[Theorem 3.3]{DrineasIpsen_LowRankMatrixApproximations_2019}, Theorem~\ref{thm:1} can be extended to the case of any Schatten-$p$ class of norms.
\end{remark}


If we target to bound $\norm{[\I_n - \pi(Z)] A}_2^2 - \norm{\ubar{A}_k}^2_2$ instead of $\norm{[\I_n - \pi(Z)] A}_2^2 - \norm{[\I_n - \pi(Z)]\ubar{A}_k}_2^2$, we are able to improve the result given in Theorem~\ref{thm:1}. This is detailed next.

\begin{theorem}(Improved deterministic error bound in spectral norm) \label{thm:2}
    Let $A \in \Rnm$ such that $n \geq m$ and $Z \in \Rnp$ a full column rank matrix with $p \leq \rank(A)$. For a given $k \in \{ 1, \dots,p\}$, set $\Omega_k = U_k\tra Z \in \R^{k \times p}$ and $\ubar{\Omega}_k = \ubar{U}_k\tra Z \in \R^{(n-k) \times p}$ such that $\Omega_k$ is full row rank (i.e., $\rank(\Omega_k) = k$).    Then, one has
    \begin{equation} \label{deterministic_bound_2_enhanced}
        \norm{[\I_n - \pi(Z)] A}_2^2 - \norm{\ubar{A}_k}^2_2 \leq
        \min \set{ \norm{S_k}_2^2 \norm{\widehat{\Sigma}_k}_2^2,~\norm{T_k \widehat{\Sigma}_k}_2^2 },
    \end{equation}
    where $\widehat{\Sigma}_k = \left(\Sigma_k^2 - \sigma_{k+1}^2 \I_k\right)\half \in \R^{k \times k}$,  $T_k = \ubar{\Omega}_k \Omega_k\pinv \in \R^{(n-k) \times k}$ and $S_k = (\I_{n-k} + T_k T_k\tra)\halfinv T_k \in \R^{(n-k) \times k}$.
\end{theorem}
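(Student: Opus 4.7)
The plan is to mimic the strategy used in the spectral-norm part of Theorem~\ref{thm:1}, but to replace the crude decomposition $AA\tra = A_k A_k\tra + \ubar{A}_k\ubar{A}_k\tra$ by a sharper one that already isolates the $\sigma_{k+1}^2$ contribution, so that the ``$\Sigma_k$'' factor gets replaced by $\widehat{\Sigma}_k$. The guiding identity is
\begin{equation*}
    AA\tra \;=\; U_k \Sigma_k^2 U_k\tra + \ubar{U}_k \ubar{\Sigma}_k \ubar{\Sigma}_k\tra \ubar{U}_k\tra,
\end{equation*}
together with the observation that $\ubar{\Sigma}_k \ubar{\Sigma}_k\tra \lleq \sigma_{k+1}^2 \I_{n-k}$ (since the singular values appearing in $\ubar{\Sigma}_k$ are all bounded by $\sigma_{k+1}$) and $\ubar{U}_k \ubar{U}_k\tra = \I_n - U_k U_k\tra$, which together give the key semidefinite estimate
\begin{equation*}
    AA\tra \;\lleq\; \sigma_{k+1}^2 \I_n + U_k \widehat{\Sigma}_k^2 U_k\tra.
\end{equation*}

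First, I would conjugate this inequality by $[\I_n - \pi(Z)]$ using~\eqref{eq:conjugation_rule}. Since $[\I_n - \pi(Z)]$ is idempotent, the left-hand side becomes $[\I_n - \pi(Z)] A A\tra [\I_n - \pi(Z)]$ (whose spectral norm is exactly $\norm{[\I_n - \pi(Z)] A}_2^2$), and the right-hand side becomes a sum of two positive semidefinite matrices. Passing to the spectral norm via Weyl's inequality (which is valid because both summands are PSD and so is the left-hand side) yields
\begin{equation*}
    \norm{[\I_n - \pi(Z)] A}_2^2 \;\leq\; \sigma_{k+1}^2 \,\norm{\I_n - \pi(Z)}_2 \;+\; \norm{[\I_n - \pi(Z)] U_k \widehat{\Sigma}_k}_2^2,
\end{equation*}
and the first term is bounded by $\sigma_{k+1}^2 = \norm{\ubar{A}_k}_2^2$.

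Second, I would estimate $\norm{[\I_n - \pi(Z)] U_k \widehat{\Sigma}_k}_2^2$ by exactly the same recipe as in the proof of Theorem~\ref{thm:1}, only with $\widehat{\Sigma}_k$ substituted for $\Sigma_k$. Concretely, rewrite this squared spectral norm as $\norm{\widehat{\Sigma}_k U_k\tra [\I_n - \pi(Z)] U_k \widehat{\Sigma}_k}_2$, apply Lemma~\ref{lem:1} together with the conjugation rule~\eqref{eq:conjugation_rule} to get
\begin{equation*}
    \widehat{\Sigma}_k U_k\tra [\I_n - \pi(Z)] U_k \widehat{\Sigma}_k \;\lleq\; \widehat{\Sigma}_k S_k\tra S_k \widehat{\Sigma}_k \;\lleq\; \widehat{\Sigma}_k T_k\tra T_k \widehat{\Sigma}_k,
\end{equation*}
and finish with the strong submultiplicativity~\eqref{eq:submultiplicativity} to bound $\norm{S_k \widehat{\Sigma}_k}_2^2 \le \norm{S_k}_2^2 \norm{\widehat{\Sigma}_k}_2^2$. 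Combining the two steps gives~\eqref{deterministic_bound_2_enhanced}.

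The only non-routine step is the first one: realizing that~\eqref{rel:thm1:1} is wasteful because it treats $\ubar{A}_k$ with a full $[\I_n - \pi(Z)]$ projection, whereas the stronger target only subtracts the scalar $\sigma_{k+1}^2$. The fix is to replace $\norm{[\I_n - \pi(Z)]\ubar{A}_k}_2^2$ by its crude upper bound $\sigma_{k+1}^2$ \emph{inside} the semidefinite inequality (and not afterwards), via $\ubar{\Sigma}_k \ubar{\Sigma}_k\tra \lleq \sigma_{k+1}^2 \I_{n-k}$. Everything after that proceeds along the familiar lines of Theorem~\ref{thm:1}.
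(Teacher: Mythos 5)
Your proposal is correct and follows essentially the same route as the paper: the same semidefinite estimate $AA\tra \lleq \sigma_{k+1}^2 \I_n + U_k \widehat{\Sigma}_k^2 U_k\tra$ obtained from $\ubar{\Sigma}_k\ubar{\Sigma}_k\tra \lleq \sigma_{k+1}^2 \I_{n-k}$, followed by conjugation with $\I_n - \pi(Z)$, the triangle inequality in the spectral norm, and a rerun of the Theorem~\ref{thm:1} argument with $\widehat{\Sigma}_k$ in place of $\Sigma_k$. The only cosmetic difference is your invocation of Weyl's inequality where the paper simply uses the triangle inequality together with monotonicity of the spectral norm under the semidefinite ordering.
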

\begin{proof}
    First, we note that $\norm{\ubar{A}_k}^2_2=\sigma_{k+1}^2$. Then, by definition of $\sigma_{k+1}$ and $\ubar{\Sigma}_k$, one has  $\ubar{\Sigma}_k \ubar{\Sigma}_k\tra  \lleq \sigma_{k+1}^2 \I_{n-k}$. Thus, we obtain
    \begin{align*}
        A A\tra  & \lleq U_k \Sigma_k^2 U_k\tra + \sigma_{k+1}^2 \ubar{U}_k \ubar{U}_k\tra,  \\
        & \lleq U_k \Sigma_k^2 U_k\tra + \sigma_{k+1}^2 (\I_n - U_k U_k\tra ),\\
        & \lleq U_k \widehat{\Sigma}_k^2 U_k\tra + \sigma_{k+1}^2 \I_n.
    \end{align*}
    Hence,
    \begin{align*}
        \norm{[\I_n - \pi(Z)] A}_2^2 & = \norm{[\I_n - \pi(Z)] A A\tra [\I_n - \pi(Z)]}_{2}, \\
        & \leq \sigma_{k+1}^2 \norm{\I_n - \pi(Z)}_2 + \norm{[\I_n - \pi(Z)] U_k \widehat{\Sigma}_k^2 U_k\tra [\I_n - \pi(Z)]}_{2}, \\
        & \leq \norm{\ubar{A}_k}^2_2 + \norm{[\I_n - \pi(Z)] U_k \widehat{\Sigma}_k}_2^2.
    \end{align*}
    Then, using~\eqref{rel:thm1:3} with $\widehat{\Sigma}_k$ instead of $\Sigma_k$, the rest of the proof
    follows straightforwardly.
\end{proof}

By definition of $\widehat{\Sigma}_k$, one has $\widehat{\Sigma}_k \lleq \Sigma_k$. Thus we deduce that if we target to bound $\norm{[\I_n - \pi(Z)] A}_2^2 - \norm{\ubar{A}_k}^2_2$, then the error bound~\eqref{deterministic_bound_2_enhanced} is tighter than~\eqref{deterministic_bound_F_2}. To the best of our knowledge, Theorem~\ref{thm:2} is new and provides an improved error bound in the spectral norm for a fairly general choice of $Z$.

Finally, we emphasize that both Theorems~\ref{thm:1} and~\ref{thm:2} will play a key role for deriving new  improved error bounds in expectation for the low-rank approximation to a given matrix. This is detailed next.


\subsection{Stochastic analysis} \label{sec:2:sub:2}

We now provide error bounds in expectation and consider the case where  $Z \in \Rnp$ is drawn as a  Gaussian matrix of mean $\widehat{Z} \in \Rnp$ and covariance matrix $\Covmat(Z)$, that is, $Z \sim \mathcal{N}(\widehat{Z}, \Covmat(Z))$, with $2 < p \le
\min\set{\rank(A), \, \rank(\Covmat(Z))}$. For $k \in \{1, \ldots, p-2\}$, we  define $\Omega_k \in \R^{k \times p}$ and $\ubar{\Omega}_k \in \R^{(n-k) \times p}$ as  $\Omega_k = U_k\tra (Z- \widehat{Z})$ and $\ubar{\Omega}_k = \ubar{U}_k\tra (Z- \widehat{Z})$, respectively. The condition $p \le \rank{(\Covmat(Z))}$ ensures that $Z- \widehat{Z}$ has full column rank with probability one~\cite{FengZhang_RankRandomMatrix_2007}. We assume that $\Omega_k$ has full row rank in this section. \\

This general approach offers several advantages but rises additional technical difficulties. In particular, $\Omega_k$ and $\ubar{\Omega}_k$ are now stochastically dependent on the distribution law of $Z$. Thus, before stating our main results in Section~\ref{sec:2:sub:2:sub:2}, we provide in Section~\ref{sec:2:sub:2:sub:1} preparatory lemmas that extend existing probabilistic results to the non-standard Gaussian case. In this section, we consider the following block partitioning of the projected covariance matrix $U\tra \Covmat(Z)\, U$\begin{equation*}
 U\tra \Covmat(Z)\, U=   \begin{bmatrix}
        ~U_k\tra~ \\
        ~\ubar{U}_k\tra~
    \end{bmatrix} \Covmat(Z)
    \begin{bmatrix}
        ~U_k & \ubar{U}_k~
    \end{bmatrix} =
    \begin{bmatrix}
        \Covmat_k(Z) & \Covmat_{\perp,k}(Z)\tra  \\
        \Covmat_{\perp,k}(Z) & \uwidebar{\Covmat}_k(Z)
    \end{bmatrix},
\end{equation*}
with
\begin{equation} \label{eq:partitioning_covariance_matrix}
    \begin{aligned}
        \Covmat_k(Z) & = U_k\tra \Covmat(Z) U_k \in \Rkk, \\
        \Covmat_{\perp,k}(Z) & = \ubar{U}_k\tra \Covmat(Z) U_k\in \real^{(n-k) \times k},  \\
        \uwidebar{\Covmat}_k(Z) & = \ubar{U}_k\tra \Covmat(Z) \ubar{U}_k\in \real^{(n-k)\times (n-k)}.
    \end{aligned}
\end{equation}


\subsubsection{Preparatory lemmas} \label{sec:2:sub:2:sub:1}

Given that $\Omega_k = U_k\tra (Z- \widehat{Z})$ and $\ubar{\Omega}_k = \ubar{U}_k\tra  (Z- \widehat{Z})$, then by using elementary properties of Gaussian vectors, one gets that $\Omega_k \sim \mathcal{N}(0, \Covmat_k(Z))$ and $\ubar{\Omega}_k \sim \mathcal{N}(0, \uwidebar{\Covmat}_k(Z))$. We note that, although $\Omega_k $ and $\ubar{\Omega}_k$ are centered Gaussian matrices, the conditional law of $\ubar{\Omega}_k$ with respect to $\Omega_k$ follows a Gaussian distribution that is not necessarily centered~\cite[Theorem 1.2.11]{Muirhead_AspectsMultivariateStatistical_1982}. We therefore adapt this result to our setting in the next lemma.

\begin{lemma} \label{lem:3}
    Let $A \in \Rnm$ such that $n \geq m$ and $Z \in \Rnp$ a Gaussian full column rank matrix of mean $\widehat{Z} \in \Rnp$ and covariance matrix $\Covmat(Z)$, that is, $Z \sim \mathcal{N}(\widehat{Z}, \Covmat(Z))$ satisfying $2 < p \leq \min\set{\rank(A), \, \rank(\Covmat(Z))}$. For a given $k \in \set{1, \dots, p-2}$, set $\Omega_k = U_k\tra (Z - \widehat{Z}) \in \R^{k \times p}$, $\ubar{\Omega}_k = \ubar{U}_k\tra  (Z - \widehat{Z}) \in \R^{(n-k) \times p}$ such that $\Omega_k$ is full row rank (i.e. $\rank(\Omega_k) = k$). If the projected covariance matrix $\Covmat_k(Z)$ is non-singular, then the random matrix $\ubar{\Omega}_k$ conditioned by $\Omega_k$ follows a Gaussian distribution of mean
    \begin{equation} \label{eq:conditional_dsitribution:mean}
        \expect{\ubar{\Omega}_k \mid \Omega_k} = \Covmat_{\perp,k}(Z) [\Covmat_k(Z)]\inv \Omega_k,
    \end{equation}
    and of covariance matrix given by
    \begin{equation} \label{eq:conditional_dsitribution:cov}
        \Covmat\left(\ubar{\Omega}_k \mid \Omega_k\right) =
        \uwidebar{\Covmat}_k(Z) - \Covmat_{\perp,k}(Z) \left[\Covmat_k(Z)\right]\inv \Covmat_{\perp,k}(Z)\tra,
    \end{equation}
    where $\Covmat_k(Z) = U_k\tra \Covmat(Z) U_k$, $\Covmat_{\perp,k}(Z) = \ubar{U}_k\tra \Covmat(Z) U_k$ and $\uwidebar{\Covmat}_k(Z) = \ubar{U}_k\tra \Covmat(Z) \ubar{U}_k$.
\end{lemma}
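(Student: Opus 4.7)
The plan is to reduce the matrix-valued statement to the classical vector-valued result on Gaussian conditioning, applied column by column. Write $Z - \widehat{Z} = [\tilde z_1, \ldots, \tilde z_p]$, where the columns $\tilde z_i$ are i.i.d.\ with distribution $\mathcal{N}(0, \Covmat(Z))$ by the definition of a Gaussian matrix given in~\eqref{eq:distribution_gaussian_matrix}--\eqref{eq:gaussian_matrix_relation}. Then the $i$-th columns of $\Omega_k$ and $\ubar{\Omega}_k$ are $U_k\tra \tilde z_i$ and $\ubar{U}_k\tra \tilde z_i$, respectively.

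Next I would concatenate these into the stacked random vector
\begin{equation*}
    y_i = \begin{bmatrix} U_k\tra \\ \ubar{U}_k\tra \end{bmatrix} \tilde z_i \in \R^{n},
\end{equation*}
which, by linearity of centered Gaussian vectors, is distributed as $\mathcal{N}(0, U\tra \Covmat(Z)\, U)$ with the block structure given just before~\eqref{eq:partitioning_covariance_matrix}. The columns $y_1, \ldots, y_p$ are i.i.d., and because $\Covmat_k(Z)$ is assumed non-singular, the classical conditional Gaussian formula~\cite[Theorem 1.2.11]{Muirhead_AspectsMultivariateStatistical_1982} applies to each $y_i$ individually: the lower block of $y_i$, conditioned on its upper block, is Gaussian with mean $\Covmat_{\perp,k}(Z)[\Covmat_k(Z)]\inv (U_k\tra \tilde z_i)$ and covariance $\uwidebar{\Covmat}_k(Z) - \Covmat_{\perp,k}(Z)[\Covmat_k(Z)]\inv \Covmat_{\perp,k}(Z)\tra$.

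Finally, since the $y_i$ are mutually independent, the joint conditional law of $\ubar{\Omega}_k = [\ubar{U}_k\tra \tilde z_1, \ldots, \ubar{U}_k\tra \tilde z_p]$ given $\Omega_k = [U_k\tra \tilde z_1, \ldots, U_k\tra \tilde z_p]$ factors over columns. Assembling the column-wise conditional means into matrix form yields exactly $\Covmat_{\perp,k}(Z)[\Covmat_k(Z)]\inv \Omega_k$, matching~\eqref{eq:conditional_dsitribution:mean}, and the column-wise conditional covariance matches~\eqref{eq:conditional_dsitribution:cov} because it is column-independent and identical across $i$. I do not expect any truly hard step here; the only mild subtlety is to verify that the stacked representation preserves the non-singularity hypothesis needed to apply the Muirhead formula, which follows directly from the assumption $\rank(\Covmat_k(Z)) = k$.
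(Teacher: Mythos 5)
Your proposal is correct and follows exactly the route the paper intends: the paper gives no explicit proof of Lemma~\ref{lem:3}, stating only that it is an adaptation of the classical conditional Gaussian formula~\cite[Theorem 1.2.11]{Muirhead_AspectsMultivariateStatistical_1982}, and your column-by-column reduction to that vector-valued result (using independence of the centered columns and the block partition of $U\tra \Covmat(Z)\, U$) is precisely that adaptation. The one point worth making explicit, which you essentially do, is that conditioning the $i$-th column of $\ubar{\Omega}_k$ on the full matrix $\Omega_k$ reduces to conditioning on its own $i$-th column by independence across columns.
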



\begin{remark}
When $\Covmat{(Z)} = A A \tra$, we note that $\Covmat_{\perp,k}(Z)~=~0$, which yields \linebreak $\expect{ \ubar{\Omega}_k \mid \Omega_k} = 0$ and   $\Covmat\left(\ubar{\Omega}_k \mid \Omega_k\right) = \uwidebar{\Covmat}_k(Z)$.
\end{remark}



The next two lemmas aim at proposing key results about Gaussian matrices in the non-standard case. In particular, we extend~\cite[Propositions 10.1 and 10.2]{HalkoMartinssonEtAl_FindingStructureRandomness_2011} to the case of a non-standard Gaussian matrix with general covariance matrix and potentially nonzero mean term.
\begin{lemma} \label{lem:4}
    Let $N \in \Rpp$ be a given matrix and $M \in \R^{k \times p}$ be a Gaussian matrix such that $M \sim \mathcal{N}(\widehat{M}, \Covmat(M))$ with mean $\widehat{M} \in \R^{k \times p}$ and covariance matrix $\Covmat(M) \in \R^{k \times k}$. Then
    \begin{equation} \label{eq:expectation_product_norm:2}
        \expect{\norm{M N}_2} \leq
        \norm{\widehat{M}N}_2 + \norm{\Covmat(M)\half}_2 \norm{N}_F + \norm{\Covmat(M)\half}_F \norm{N}_2,
    \end{equation}
    and
    \begin{equation} \label{eq:expectation_product_norm:F}
        \expect{\norm{M N}_F^2} = \norm{\widehat{M}N}_F^2 + \norm{\Covmat(M) \half}_F^2 \norm{N}_F^2.
    \end{equation}
\end{lemma}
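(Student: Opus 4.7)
The plan is to reduce the non-standard Gaussian setting to the standard one via the parametrization~\eqref{eq:gaussian_matrix_relation}, and then invoke the corresponding standard-Gaussian estimates of Halko, Martinsson and Tropp. Concretely, I write $M = \widehat{M} + \Covmat(M)\half G$ with $G \sim \mathcal{N}(0, \I_k)$, so that
\begin{equation*}
    M N = \widehat{M} N + \Covmat(M)\half G N.
\end{equation*}

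For the Frobenius identity~\eqref{eq:expectation_product_norm:F}, I would expand
\begin{equation*}
    \|MN\|_F^2 = \|\widehat{M}N\|_F^2 + 2 \tr\!\bigl(N\tra \widehat{M}\tra \Covmat(M)\half G N\bigr) + \|\Covmat(M)\half G N\|_F^2,
\end{equation*}
take expectations, and note that the cross term vanishes because $\expect{G}=0$. The remaining piece is handled entry-wise: writing out $(\Covmat(M)\half G N)_{ij}$ and using the fact that the entries of $G$ are independent $\mathcal{N}(0,1)$, a direct calculation shows $\expect{\|S G T\|_F^2} = \|S\|_F^2 \|T\|_F^2$ for arbitrary conforming deterministic matrices $S$ and $T$. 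Specializing to $S = \Covmat(M)\half$ and $T = N$ delivers~\eqref{eq:expectation_product_norm:F}.

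For the spectral bound~\eqref{eq:expectation_product_norm:2}, I apply the triangle inequality,
\begin{equation*}
    \|MN\|_2 \leq \|\widehat{M}N\|_2 + \|\Covmat(M)\half G N\|_2,
\end{equation*}
then take expectations. Since $\widehat{M}N$ is deterministic, only the second term requires bounding. For this, I would invoke~\cite[Proposition 10.1]{HalkoMartinssonEtAl_FindingStructureRandomness_2011}, which gives, for a standard Gaussian matrix $G$ and arbitrary deterministic $S$, $T$,
\begin{equation*}
    \expect{\|S G T\|_2} \leq \|S\|_2 \|T\|_F + \|S\|_F \|T\|_2.
\end{equation*}
Applying this with $S = \Covmat(M)\half$ and $T = N$ immediately yields~\eqref{eq:expectation_product_norm:2}.

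There is no serious obstacle here: the proof is really a factorization argument coupled with the standard-Gaussian estimates, and both the Frobenius identity and the spectral inequality follow mechanically. The only mild subtlety is that one has to verify that the factorization $M = \widehat{M} + \Covmat(M)\half G$ is legitimate even when $\Covmat(M)$ is only positive semidefinite, which is ensured by the discussion following~\eqref{eq:gaussian_matrix_relation} on the existence of the positive semidefinite square root.
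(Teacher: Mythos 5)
Your proposal is correct and follows essentially the same route as the paper: factor $M = \widehat{M} + \Covmat(M)\half GN$ via~\eqref{eq:gaussian_matrix_relation}, expand (Frobenius) or use the triangle inequality (spectral), kill the cross term with $\expect{G}=0$, and control the remaining standard-Gaussian term. The only cosmetic difference is that you compute $\expect{\|SGT\|_F^2}=\|S\|_F^2\|T\|_F^2$ entry-wise, whereas the paper cites \cite[Proposition 10.1]{HalkoMartinssonEtAl_FindingStructureRandomness_2011} for both norms.
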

\begin{proof}
    Using the definition of a non-standard Gaussian matrix~\eqref{eq:gaussian_matrix_relation}, we have $M = \widehat{M} + \Covmat(M) \half G$ with $G \in \R^{k \times p }$ a standard Gaussian matrix (i.e. $G \sim \mathcal{N}(0, I_k)$). Thus, by applying the triangle inequality in the spectral norm, we get
    \begin{equation*}
        \norm{M N}_{2} = \norm{(\widehat{M} + \Covmat(M)\half G) N}_{2} \leq \norm{\widehat{M}N}_{2} + \norm{\Covmat(M)\half G N}_{2}.
    \end{equation*}
    Then, by applying~\cite[Proposition 10.1]{HalkoMartinssonEtAl_FindingStructureRandomness_2011} to obtain an upper bound of $\expect{\norm{\Covmat(M)\half G N}_{2}}$, we deduce~\eqref{eq:expectation_product_norm:2}.
    In the Frobenius norm, we have
    \begin{equation*}
        \norm{M N}_F^2 = \norm{\widehat{M}N + \Covmat(M)\half G N}_F^2 = \norm{\widehat{M}N}_F^2 + \norm{\Covmat(M)\half G N}_F^2 + 2\tr (N\tra \widehat{M}\tra \Covmat(M)\half G N).
    \end{equation*}
    Taking the expectation and using~\cite[Proposition 10.1]{HalkoMartinssonEtAl_FindingStructureRandomness_2011}, we get
    \begin{equation*}
        \expect{\norm{M N}_F^2} = \norm{\widehat{M}N}_F^2 + \norm{\Covmat(M)\half}_F^2 \norm{N}_F^2 + 2~\expect{\tr (N\tra \widehat{M}\tra \Covmat(M)\half G N)}.
    \end{equation*}
    By using the linearity of the expectation and the fact that $\expect{G} = 0$, we remark that
    \begin{equation*}
        \expect{\tr (N\tra \widehat{M}\tra \Covmat(M) \half G N)}= \tr \left(N\tra \widehat{M}\tra \Covmat(M)\half \expect{G} N\right) = 0.
    \end{equation*}
    This concludes the proof.
\end{proof}

In the next lemma, we show how to bound (in expectation) the quantity $\norm{M\pinv N}_{2}$ or $\norm{M\pinv N}^2_{F}$, where $N \in \Rkk$ is a given fixed matrix and $M \in \R^{k \times p}$ follows a centered Gaussian distribution.

\begin{lemma} \label{lem:5}
    For a fixed integer $k>1$, let $N \in \Rkk$ be a given matrix and $M \in \R^{k \times p}$ (with $p > k + 1$)  be a centered  Gaussian matrix $M \sim \mathcal{N}(0, \Covmat(M))$. If the covariance matrix $\Covmat(M)$ is non-singular, then
    \begin{equation*} \label{eq:expectation_pseudoinverse_norm}
       \expect{\norm{M\pinv N}_F^2} = \frac{\norm{(N\tra [\Covmat(M)]\inv N)\half}_F}{p - k - 1}
       \quad \txtand \quad
       \expect{\norm{M\pinv N}_2} \leq \frac{e \sqrt{p}}{p-k}\sqrt{\norm{N\tra [\Covmat(M)]\inv N}_2},
    \end{equation*}
    where $e$ denotes exponential of $1$, i.e., $e=\exp(1)$.
\end{lemma}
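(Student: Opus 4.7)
The plan is to reduce this non-standard centered Gaussian problem to the classical standard Gaussian case via the representation~\eqref{eq:gaussian_matrix_relation}, namely $M = \Covmat(M)\half G$ where $G \in \R^{k \times p}$ is a standard Gaussian matrix with $G \sim \mathcal{N}(0, \I_k)$. Since $\Covmat(M)\half$ is square and invertible under the assumption that $\Covmat(M)$ is non-singular, and since $G$ has full row rank almost surely (because $p > k$), I would first establish the almost-sure identity
\begin{equation*}
    M\pinv = G\pinv \left[\Covmat(M)\half\right]\inv.
\end{equation*}
This follows from the explicit formula $M\pinv = M\tra (M M\tra)\inv$ valid for a full row rank wide matrix, by substituting $M = \Covmat(M)\half G$ and collapsing the invertible left factor $\Covmat(M)\half$. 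Consequently, introducing $\widetilde{N} = \left[\Covmat(M)\half\right]\inv N \in \Rkk$, one obtains $M\pinv N = G\pinv \widetilde{N}$ almost surely.

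With this reduction, I would apply~\cite[Proposition 10.2]{HalkoMartinssonEtAl_FindingStructureRandomness_2011} to the standard Gaussian matrix $G$ and the fixed matrix $\widetilde{N}$, which provides $\expect{\norm{G\pinv \widetilde{N}}_F^2} = \norm{\widetilde{N}}_F^2/(p - k - 1)$ and $\expect{\norm{G\pinv \widetilde{N}}_2} \leq e \sqrt{p}\, \norm{\widetilde{N}}_2/(p - k)$. It then remains to re-express $\widetilde{N}$ in terms of the original data. Since $\widetilde{N}\tra \widetilde{N} = N\tra [\Covmat(M)]\inv N$, one has $\norm{\widetilde{N}}_F^2 = \tr(N\tra [\Covmat(M)]\inv N) = \norm{(N\tra [\Covmat(M)]\inv N)\half}_F^2$ for the Frobenius part, and $\norm{\widetilde{N}}_2^2 = \norm{\widetilde{N}\tra \widetilde{N}}_2 = \norm{N\tra [\Covmat(M)]\inv N}_2$ for the spectral part. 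Substitution into the two previous bounds yields precisely the two announced statements.

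The main technical point that requires care is the pseudoinverse identity $M\pinv = G\pinv [\Covmat(M)\half]\inv$, since in general $(AB)\pinv \neq B\pinv A\pinv$. Here, however, the left factor $\Covmat(M)\half$ is square and invertible, so the collapse is straightforward from the defining formula for the pseudoinverse of a full row rank matrix, with $M M\tra = \Covmat(M)\half G G\tra \Covmat(M)\half$ being invertible almost surely thanks to the non-singularity of $\Covmat(M)$ and the almost-sure full row rank of $G$. Apart from this reduction, the result is a direct change of variable argument applied to the standard-Gaussian bounds of~\cite{HalkoMartinssonEtAl_FindingStructureRandomness_2011}, in the same spirit as Lemma~\ref{lem:4}.
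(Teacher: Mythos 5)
Your argument is correct and follows essentially the same route as the paper. The spectral-norm half is the paper's proof verbatim: write $M = \Covmat(M)\half G$, collapse the invertible square factor into the pseudoinverse so that $M\pinv = G\pinv \Covmat(M)\halfinv$, use submultiplicativity, and invoke $\expect{\norm{G\pinv}_2} \leq e\sqrt{p}/(p-k)$ from~\cite[Proposition 10.2]{HalkoMartinssonEtAl_FindingStructureRandomness_2011}. For the Frobenius half the paper works directly with $MM\tra \sim \mathcal{W}_k(p, \Covmat(M))$ and cites the inverse-Wishart first moment~\cite[Theorem 3.2.12]{Muirhead_AspectsMultivariateStatistical_1982}, whereas you whiten first and apply the corresponding fact to $G$; the two are equivalent.

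Two small points of care. First,~\cite[Proposition 10.2]{HalkoMartinssonEtAl_FindingStructureRandomness_2011} as stated only gives $\expect{\norm{G\pinv}_F^2} = k/(p-k-1)$, not the identity $\expect{\norm{G\pinv \widetilde{N}}_F^2} = \norm{\widetilde{N}}_F^2/(p-k-1)$ with a right multiplier that you attribute to it. To justify the latter you need the matrix-valued expectation $\expect{(GG\tra)\inv} = \I_k/(p-k-1)$, which is exactly the inverse-Wishart moment the paper cites (alternatively, orthogonal invariance of $G$ forces $\expect{(GG\tra)\inv}$ to be a multiple of $\I_k$, and the trace identity of Proposition 10.2 then pins down the constant). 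Second, your computation correctly yields $\norm{(N\tra [\Covmat(M)]\inv N)\half}_F^2$ in the numerator, i.e.\ $\tr(N\tra[\Covmat(M)]\inv N)$; the unsquared Frobenius norm in the lemma as printed is a typo in the paper, whose own proof derives the same trace quantity.
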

\begin{proof}
    Since $M$ is a random matrix with $p$ independent columns, where each column is a multivariate Gaussian distribution with zero mean and covariance matrix $\Covmat(M)$, the matrix $M M\tra$ follows a Wishart distribution of the form $\mathcal{W}_k(p, \Covmat(M))$\cite[Definition 3.1.3]{Muirhead_AspectsMultivariateStatistical_1982}.  One has also $\norm{M\pinv N}_F^2 = \tr(N \tra [(M\pinv)\tra M \pinv] N) = \tr(N\tra [MM\tra]\inv N)$, where the second equality holds with probability one since $p > k+1$. In fact, if $\Covmat(M)$ is non-singular, the matrix $M M\tra$ is non-singular almost surely, see~\cite[Theorem 3.1.4]{Muirhead_AspectsMultivariateStatistical_1982}. In this case, according to~\cite[Theorem 3.2.12]{Muirhead_AspectsMultivariateStatistical_1982} for $p > k + 1$, one has, for any matrix $N \in \Rkk$,
    \begin{equation*}
        \expect{N\tra [MM\tra]\inv N} = N\tra \expect{[MM\tra]\inv} N = \displaystyle \frac{N\tra \Covmat(M) \inv N}{p - k - 1}.
    \end{equation*}
     Hence, $$\expect{\tr(N\tra [M M\tra]\inv N)} = \displaystyle \frac{\tr(N\tra \Covmat(M)\inv N)}{p - k - 1}= \displaystyle \frac{\norm{(N\tra \Covmat(M)\inv N)\half}_F}{p - k - 1},$$ which concludes the proof for the Frobenius norm.
    In the spectral norm, using~\eqref{eq:gaussian_matrix_relation}, we have $M =\Covmat(M) \half G$ with $G \in \R^{k \times p}$ following a standard Gaussian distribution. If $\Covmat(M)$ is non-singular, then one has $M\pinv = G\pinv \Covmat(M)\halfinv$  and thus, for any matrix $N \in \Rkk$, we have
    \begin{equation*}
        \norm{M\pinv N}_2 = \norm{G\pinv \Covmat(M)\halfinv N}_2 \leq \norm{G\pinv}_2 \norm{\Covmat(M)\halfinv N}_2 = \norm{G\pinv}_2 \sqrt{\norm{N\tra \Covmat(M)\inv N}_2}.
    \end{equation*}
    Then, we take the expectation and apply~\cite[Proposition 10.2]{HalkoMartinssonEtAl_FindingStructureRandomness_2011} to bound the quantity $\expect{\norm{G\pinv}_2}$, which concludes the proof.
\end{proof}

We now introduce a final lemma based on Lemmas~\ref{lem:3},~\ref{lem:4} and~\ref{lem:5}, which is central in the derivation of our error bounds in expectation.

\begin{lemma} \label{lem:6}
    Let $A \in \Rnm$ such that $n \geq m$ and $Z \in \Rnp$ a Gaussian matrix of mean $\widehat{Z} \in \Rnp$ and covariance matrix $\Covmat(Z)$, that is, $Z \sim \mathcal{N}(\widehat{Z}, \Covmat(Z))$, satisfying $2 < p \leq \min\set{\rank(A), \, \rank(\Covmat(Z))}$. For a given $k \in \set{1, \dots, p-2}$, set $\Omega_k = U_k\tra (Z - \widehat{Z}) \in \R^{k \times p}$, $\ubar{\Omega}_k = \ubar{U}_k\tra  (Z - \widehat{Z}) \in \R^{(n-k) \times p}$ such that $\Omega_k$ is full row rank (i.e. $\rank(\Omega_k) = k$) and $T_k = \ubar{\Omega}_k \Omega_k\pinv$.

    If the covariance matrix $\Covmat_k(Z)$ is non-singular, then, for any matrix $N \in \Rkk$, one has
    \begin{equation} \label{eq:expectation_norm_omega_k:2}
        \expect{\norm{T_k N}_2} \leq \CtotSp(\Covmat(Z), N) := \CdepSp(\Covmat(Z), N) + \CSp(\Covmat(Z), N),
    \end{equation}
    and
    \begin{equation} \label{eq:expectation_norm_omega_k:F}
        \expect{\norm{T_k N}_F^2} =  \CtotF(\Covmat(Z), N) := \CdepF(\Covmat(Z), N)^2 + \CF(\Covmat(Z), N)^2.
    \end{equation}

    The (positive) constants are defined as
    \begin{equation} \label{eq:expectation_norm_omega_k:C_dep}
        \CdepSpF(\Covmat(Z), N) = \norm{\Covmat_{\perp,k}(Z) [\Covmat_k(Z)]\inv N}_{2,F},
    \end{equation}

    \begin{equation} \label{eq:expectation_norm_omega_k:C2}
        \begin{aligned}
            \CSp(\Covmat(Z), N) & = \frac{\norm{\Covmat\left(\ubar{\Omega}_k \mid \Omega_k\right)\half}_2 \norm{(N\tra [\Covmat_k(Z)]\inv N)\half}_F}{\sqrt{p-k-1}}  \\
            & \quad + \frac{e \sqrt{p}}{p-k}\norm{\Covmat\left(\ubar{\Omega}_k \mid \Omega_k\right)\half}_F \norm{(N\tra [\Covmat_k(Z)]\inv N)\half}_2,
        \end{aligned}
    \end{equation}
    and
    \begin{eqnarray} \label{eq:expectation_norm_omega_k:CF}
        \CF(\Covmat(Z), N) = \frac{\norm{\Covmat\left(\ubar{\Omega}_k \mid \Omega_k\right)\half}_F\norm{(N\tra [\Covmat_k(Z)]\inv N)\half}_F}{\sqrt{p-k-1}},
    \end{eqnarray}
    where {$\Covmat_k(Z) = U_k\tra \Covmat(Z) U_k$, $\Covmat_{\perp,k}(Z)= \ubar{U}_k\tra \Covmat(Z) U_k$,  $\uwidebar{\Covmat}_k(Z) = \ubar{U}_k\tra \Covmat(Z) \ubar{U}_k$ and $\Covmat\left(\ubar{\Omega}_k \mid \Omega_k\right) = \uwidebar{\Covmat}_k(Z) - \Covmat_{\perp,k}(Z) \left[\Covmat_k(Z)\right]\inv \Covmat_{\perp,k}(Z)\tra$}.
\end{lemma}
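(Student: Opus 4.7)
The plan is to combine the three preparatory lemmas via the tower property of conditional expectation. The essential observation is that once $\Omega_k$ is fixed, the product $\Omega_k\pinv N$ is deterministic, so
$T_k N = \ubar{\Omega}_k\,(\Omega_k\pinv N)$ becomes a linear function of the Gaussian matrix $\ubar{\Omega}_k$ conditioned on $\Omega_k$. Its conditional law is supplied by Lemma~\ref{lem:3}: it is Gaussian with mean $\widehat{M} := \Covmat_{\perp,k}(Z)[\Covmat_k(Z)]\inv \Omega_k$ and covariance $\Covmat(\ubar{\Omega}_k \mid \Omega_k)$. Hence Lemma~\ref{lem:4} can be applied conditionally to $\ubar{\Omega}_k \cdot (\Omega_k\pinv N)$, and Lemma~\ref{lem:5} will then take care of the outer expectation with respect to $\Omega_k \sim \mathcal{N}(0, \Covmat_k(Z))$.

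I would treat the spectral norm first. Applying~\eqref{eq:expectation_product_norm:2} conditionally on $\Omega_k$ gives
\begin{equation*}
  \expect{\norm{T_k N}_2 \mid \Omega_k}
  \leq \norm{\widehat{M}\, \Omega_k\pinv N}_2
  + \norm{\Covmat(\ubar{\Omega}_k\mid\Omega_k)\half}_2 \norm{\Omega_k\pinv N}_F
  + \norm{\Covmat(\ubar{\Omega}_k\mid\Omega_k)\half}_F \norm{\Omega_k\pinv N}_2.
\end{equation*}
Because $\Omega_k$ is full row rank, $\Omega_k \Omega_k\pinv = \I_k$, so the mean contribution collapses to
$\widehat{M}\,\Omega_k\pinv N = \Covmat_{\perp,k}(Z)[\Covmat_k(Z)]\inv N$, which no longer depends on $\Omega_k$ and is exactly $\CdepSp(\Covmat(Z), N)$ in the spectral norm. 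Taking the outer expectation, I then invoke Lemma~\ref{lem:5}: directly to control $\expect{\norm{\Omega_k\pinv N}_2}$, and through Jensen's inequality $\expect{X} \leq \sqrt{\expect{X^2}}$ with $X = \norm{\Omega_k\pinv N}_F$ to obtain $\expect{\norm{\Omega_k\pinv N}_F} \leq \norm{(N\tra [\Covmat_k(Z)]\inv N)\half}_F / \sqrt{p-k-1}$. Reassembling the three contributions reproduces $\CtotSp = \CdepSp + \CSp$.

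The Frobenius case is structurally identical but cleaner, because Lemma~\ref{lem:4} supplies an equality. Conditioning on $\Omega_k$ yields
\begin{equation*}
  \expect{\norm{T_k N}_F^2 \mid \Omega_k}
  = \norm{\widehat{M}\, \Omega_k\pinv N}_F^2
  + \norm{\Covmat(\ubar{\Omega}_k\mid\Omega_k)\half}_F^2 \norm{\Omega_k\pinv N}_F^2.
\end{equation*}
The same simplification $\widehat{M}\,\Omega_k\pinv N = \Covmat_{\perp,k}(Z)[\Covmat_k(Z)]\inv N$ peels off the first, already deterministic, term as $\CdepF(\Covmat(Z), N)^2$, and the Frobenius statement of Lemma~\ref{lem:5} converts the expectation of the second term into $\CF(\Covmat(Z), N)^2$.

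The main technical obstacle is essentially bookkeeping: Lemma~\ref{lem:4} is stated for a square multiplier $N \in \R^{p \times p}$, whereas the relevant product here is $\Omega_k\pinv N \in \R^{p \times k}$, which is rectangular. A one-line remark is needed to observe that the proof of Lemma~\ref{lem:4} extends verbatim to rectangular right-multipliers (the triangle inequality, the cross term $\expect{\tr(\cdot)} = 0$, and the invocation of~\cite[Propositions~10.1 and~10.2]{HalkoMartinssonEtAl_FindingStructureRandomness_2011} all remain valid). Beyond that, the only care required is to place Jensen's inequality in precisely the right spot so that the $1/\sqrt{p-k-1}$ factor in $\CSp$ emerges without an extra square root, matching the stated constants exactly.
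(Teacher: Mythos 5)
Your proposal is correct and follows essentially the same route as the paper: condition on $\Omega_k$, apply Lemma~\ref{lem:4} to the conditional Gaussian law from Lemma~\ref{lem:3}, use $\Omega_k\Omega_k\pinv = \I_k$ to collapse the mean term into the deterministic constant $\CdepSpF$, and finish the outer expectation with Lemma~\ref{lem:5} (with Jensen's inequality $\expect{X}\leq\sqrt{\expect{X^2}}$ for the Frobenius-norm term in the spectral bound). Your side remark that Lemma~\ref{lem:4} must be read as holding for a rectangular right-multiplier $\Omega_k\pinv N \in \R^{p\times k}$ is a legitimate bookkeeping point that the paper passes over silently.
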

\begin{proof}
    In the spectral norm, Lemma~\ref{lem:4} gives, for any matrix $N  \in \Rkk$,
    \begin{equation} \label{lem:6:eq:1}
        \begin{aligned}
            \expect{\norm{\ubar{\Omega}_k \Omega_k\pinv N}_2 \mid \Omega_k} & \leq
            \norm{\expect{\ubar{\Omega}_k \mid \Omega_k} \Omega_k\pinv N}_2 + \norm{\Covmat\left(\ubar{\Omega}_k \mid \Omega_k\right)\half}_2 \norm{\Omega_k\pinv N}_F \\
            & \quad + \norm{\Covmat\left(\ubar{\Omega}_k \mid \Omega_k\right)\half}_F \norm{\Omega_k\pinv N}_2.
        \end{aligned}
    \end{equation}
    From Lemma~\ref{lem:3}, one has for any matrix $N \in \Rkk$,
    \begin{equation*}
        \expect{ \ubar{\Omega}_k \mid \Omega_k}  \Omega_k\pinv N = \Covmat_{\perp,k}(Z) [\Covmat_k(Z)]\inv \Omega_k \Omega_k\pinv N = \Covmat_{\perp,k}(Z) [\Covmat_k(Z)]\inv N,
    \end{equation*}
    which is a deterministic constant. Hence, taking the total expectation of~\eqref{lem:6:eq:1} leads to
    \begin{align*}
        \expect{\norm{T_k N}_2} & = \expect{\norm{\ubar{\Omega}_k \Omega_k\pinv N}_2} =  \expect{\expect{\norm{\Omega_\perp \Omega_k\pinv N}_2 \mid \Omega_k}}, \\
        & \leq \norm{\Covmat_{\perp,k}(Z) [\Covmat_k(Z)]\inv N}_2  +\norm{\Covmat\left(\ubar{\Omega}_k \mid \Omega_k\right)\half}_2 \expect{\norm{\Omega_k\pinv N}_F} \\
        & \quad + \norm{\Covmat\left(\ubar{\Omega}_k \mid \Omega_k\right)\half}_F \expect{\norm{\Omega_k\pinv N}_2}.
    \end{align*}
    Finally, by using Lemma~\ref{lem:5}, one has
    \begin{equation*}
        \expect{\norm{\Omega_k\pinv N}_F} \leq
        \expect{\norm{\Omega_k\pinv N}_F^2}\half =
        \frac{\norm{(N\tra [\Covmat_k(Z)]\inv N)\half}\half_F}{\sqrt{p-k-1}}
    \end{equation*}
    and
    \begin{equation*}
        \expect{\norm{\Omega_k\pinv N}_2} \leq ~\frac{e \sqrt{p}}{p-k} \norm{(N\tra [\Covmat_k(Z)]\inv N)\half}_2.
    \end{equation*}
    This concludes the proof in the spectral norm case. The proof related to the Frobenius norm can be derived similarly.
\end{proof}

\begin{remark}
We note that both $\CdepSp(\Covmat(Z), N)$ and $\CdepF(\Covmat(Z), N)$ do depend on \linebreak $\Covmat_{\perp,k}(Z)$, which is the term related to the statistical dependence between $\Omega_k$ and $\ubar{\Omega}_k$. Those two terms cancel out whenever $\Omega_k$ and $\ubar{\Omega}_k$ are independent. By contrast, $\CSp(\Covmat(Z), N)$ and $\CF(\Covmat(Z), N)$ will not cancel out if $\Omega_k$ and $\ubar{\Omega}_k$ are independent, but do approach zero as the number of samples $p$ increases.
\end{remark}


\subsubsection{Error bounds in expectation} \label{sec:2:sub:2:sub:2}

We are now able to provide a first key result. Given $Z \sim \mathcal{N}(\widehat{Z}, \Covmat(Z))$, we aim at bounding (in expectation) the quantity $\norm{S_k}_{2,F}$, where $S_k = (\I_{n-k} + T_k T_k\tra )\halfinv T_k$ and $T_k = \ubar{\Omega}_k \Omega_k\pinv$. We recall that the positive singular values of $S_k$ represent the sine of the canonical angles between $\range{(Z\Omega_k\pinv)}$ and $\range{(U_k)}$. This will make our proposed error bounds accurate in the presence of large canonical angles,  compared to~\cite{HalkoMartinssonEtAl_FindingStructureRandomness_2011} where the analysis is based on the tangent of the canonical angles (via the matrix $T_k$). We highlight this result in the next proposition.

\begin{proposition} \label{prop:1}
    Let $A \in \Rnm$ such that $n \geq m$ and $Z \in \Rnp$ a Gaussian matrix of mean $\widehat{Z} \in \Rnp$ and covariance matrix $\Covmat(Z)$, that is, $Z \sim \mathcal{N}(\widehat{Z}, \Covmat(Z))$, satisfying $2 < p \leq \min\set{\rank(A), \, \rank(\Covmat(Z))}$. For a given $k \in \set{1, \dots, p-2}$, set $\Omega_k = U_k\tra (Z - \widehat{Z}) \in \R^{k \times p}$, $\ubar{\Omega}_k = \ubar{U}_k\tra  (Z - \widehat{Z}) \in \R^{(n-k) \times p}$ such that $\Omega_k$ is full row rank (i.e. $\rank(\Omega_k) = k$), $T_k = \ubar{\Omega}_k \Omega_k\pinv \in \R^{(n-k) \times k}$ and $S_k = (\I_{n-k} + T_k T_k\tra )\halfinv T_k \in \R^{(n-k) \times k}$. Let  $\varphi : x \mapsto x / \sqrt{1 + x^2}$ for $x \geq 0$. We have
    \begin{equation*} \label{eq:expectation_norm_S}
        \expect{\norm{S_k}_2} \leq \varphi \left( \CtotSp(\Covmat(Z), \I_k) \right)
        \quad \txtand \quad
        \expect{\norm{S_k}_F} \leq \sqrt{k} \varphi \left( \dfrac{1}{\sqrt{k}} ~\sqrt{\CtotF(\Covmat(Z), \I_k)} \right),
    \end{equation*}
    where the positive constants $\CtotSp(\Covmat(Z), \I_k)$ and $\CtotF(\Covmat(Z), \I_k)$ are given in Lemma~\ref{lem:6} (with $N = \I_k$).
\end{proposition}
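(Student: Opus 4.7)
The plan is to first reduce the random variable $\norm{S_k}_{2,F}$ to a scalar function of $\norm{T_k}_{2,F}$ by a purely deterministic spectral computation, and then pass to the expectation using Jensen's inequality together with Lemma~\ref{lem:6}.

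First I would compute the SVD of $S_k$ from that of $T_k$. Writing $T_k = U_T \Sigma_T V_T\tra$ with singular values $\sigma_1, \ldots, \sigma_k$, a direct calculation shows that $(\I_{n-k} + T_k T_k\tra)\halfinv$ acts as multiplication by $1/\sqrt{1+\sigma_i^2}$ on each left singular direction of $T_k$ and as the identity on the orthogonal complement. Hence the singular values of $S_k$ are exactly $\varphi(\sigma_i)$. This immediately gives $\norm{S_k}_2 = \varphi(\norm{T_k}_2)$ (monotonicity of $\varphi$) and $\norm{S_k}_F^2 = \sum_{i=1}^k \varphi(\sigma_i)^2$. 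A short verification shows that $\varphi$ is concave and nondecreasing on $[0,\infty)$ (since $\varphi'(x)=(1+x^2)^{-3/2}>0$ and $\varphi''(x)=-3x(1+x^2)^{-5/2}\leq 0$).

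For the spectral norm bound, Jensen's inequality applied to the concave function $\varphi$ and the random variable $\norm{T_k}_2$ gives $\expect{\norm{S_k}_2} = \expect{\varphi(\norm{T_k}_2)} \leq \varphi(\expect{\norm{T_k}_2})$. Applying Lemma~\ref{lem:6} with $N = \I_k$ yields $\expect{\norm{T_k}_2} \leq \CtotSp(\Covmat(Z), \I_k)$, and the monotonicity of $\varphi$ closes the argument. For the Frobenius norm, I would introduce the auxiliary function $f(y) = y/(1+y) = \varphi(\sqrt{y})^2$, which is concave and nondecreasing on $[0,\infty)$. Jensen's inequality on the $k$ values $\sigma_i^2$ then gives the deterministic estimate
\begin{equation*}
\norm{S_k}_F^2 = \sum_{i=1}^k f(\sigma_i^2) \leq k \, f\!\left(\tfrac{1}{k}\norm{T_k}_F^2\right) = k\, \varphi\!\left(\norm{T_k}_F/\sqrt{k}\right)^2,
\end{equation*}
so that $\norm{S_k}_F \leq \sqrt{k}\, \varphi(\norm{T_k}_F/\sqrt{k})$. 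Taking expectations, using Jensen again for the concave $\varphi$, and combining the moment inequality $\expect{\norm{T_k}_F} \leq \sqrt{\expect{\norm{T_k}_F^2}}$ with Lemma~\ref{lem:6} to get $\expect{\norm{T_k}_F} \leq \sqrt{\CtotF(\Covmat(Z), \I_k)}$ yields the desired bound.

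The main delicate point is the Frobenius step: applying Jensen to $\varphi$ directly on the singular values produces a strictly weaker inequality. The correct rescaling $1/\sqrt{k}$ inside $\varphi$ appears naturally only once one recognizes the identity $\varphi(x)^2 = f(x^2)$ and applies concavity to $f$ on the squared singular values, so that the averaging happens at the level of $\norm{T_k}_F^2$ rather than $\norm{T_k}_F$. Everything else is a routine chain of Jensen-type inequalities.
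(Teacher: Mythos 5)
Your proposal is correct and follows essentially the same route as the paper: the same identification $\sigma_i(S_k)=\varphi(\sigma_i(T_k))$, the same concavity/Jensen argument via $\psi(y)=y/(1+y)$ (your $f$) applied to the squared singular values, and the same appeal to Lemma~\ref{lem:6} with $N=\I_k$. The only cosmetic difference is in the final Frobenius step, where you apply Jensen to $\varphi$ of $\norm{T_k}_F$ and then bound the first moment by $\sqrt{\expect{\norm{T_k}_F^2}}$, whereas the paper applies Jensen to $\psi$ of $\norm{T_k}_F^2$ and then takes $\expect{\norm{S_k}_F}\leq\sqrt{\expect{\norm{S_k}_F^2}}$; both chains yield the identical bound.
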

\begin{proof}
    We define $\sigma_1(T_k) \geq \dots \geq \sigma_{k}(T_k)$ (resp. $\sigma_1(S_k) \geq \dots \geq \sigma_{k}(S_k)$) as the positive singular values of $T_k$ (resp. $S_k$). Since $\varphi$ is an increasing map, one has\footnote{Since the positive singular values of $T_k$ represent the {\it{tangent}} of the canonical angles between $\range(Z \Omega_k\pinv)$ and $\range(U_k)$, relation~\eqref{eq:relation_sigma_sin_tan} shows that the positive singular values of $S_k$ are the sine of the canonical angles between the same subspaces.}
    \begin{equation} \label{eq:relation_sigma_sin_tan}
        \sigma_i(S_k)= \frac{\sigma_i(T_k)}{\sqrt{1 + \sigma_i(T_k)^2}} = \varphi\left(\sigma_i(T_k)\right ), \quad i \in \set{1, \dots, k}.
    \end{equation}
    Then, we obtain
    \begin{equation*} \label{eq:norm_S:spec}
        \norm{S_k}_2 = \sigma_1(S_k) = \varphi\left(\sigma_1(T_k)\right) = \varphi(\norm{T_k}_2).
    \end{equation*}
    Taking the expectation then leads to
    \begin{equation*}
        \expect{\norm{S_k}_2}
        \leq \varphi\left( \expect{\norm{T_k}_2} \right)
        \leq \varphi\left( \CtotSp(\Covmat(Z), \I_k) \right),
    \end{equation*}
    where the first inequality relies on both the concavity of $\varphi$ and Jensen's inequality, while the second one uses~\eqref{eq:expectation_norm_omega_k:2} of Lemma~\ref{lem:6} and the fact that $\varphi$ is an increasing map.
    In the Frobenius norm, one has
    \begin{equation*}
        \norm{S_k}_F^2 = \tr(S_k\tra S_k) = \sum_{i=1}^{k} \sigma_i(S_k)^2 = \sum_{i=1}^{k} \frac{\sigma_i(T_k)^2}{1 + \sigma_i(T_k)^2}.
    \end{equation*}
    We note that the scalar map $\psi: x \to \displaystyle \frac{x}{1+x}$ is concave (for all $x\ge 0$). Thus, the Jensen's inequality yields






    \begin{equation*} \label{eq:norm_S:fro}
        \frac{1}{k} \norm{S_k}_F^2
        = \frac{1}{k} \sum_{i=1}^{k} \psi \left( \sigma_i(T_k)^2 \right)
        \leq \psi \left(\frac{1}{k} \sum_{i=1}^{k} \sigma_i(T_k)^2 \right)
        = \left[\varphi \left( \frac{1}{\sqrt{k}} \norm{T_k}_F \right)\right]^2.
    \end{equation*}
    Then, by taking the expectation and exploiting both the concavity of $\psi$ and~\eqref{eq:expectation_norm_omega_k:F} of Lemma~\ref{lem:6}, we finally obtain the result.
\end{proof}

By exploiting key projection perturbation results from~\cite{DrineasIpsen_LowRankMatrixApproximations_2019}, we are now able to extend our analysis to the general setting,  where $Z$ is drawn from a Gaussian distribution of mean $\widehat{Z} \in \Rnp$ and of covariance matrix $\Covmat(Z)$. This is the second key result of our stochastic analysis proposed next.

\begin{proposition} \label{prop:2}
    Let $A \in \Rnm$ such that $n \geq m$ and $Z \in \Rnp$ a Gaussian matrix of mean $\widehat{Z} \in \Rnp$ and covariance matrix $\Covmat(Z)$, that is, $Z \sim \mathcal{N}(\widehat{Z}, \Covmat(Z))$, satisfying $2 < p \leq \min\set{\rank(A), \, \rank(\Covmat(Z))}$. Let $\pi(Z)$ and $\pi(Z- \widehat{Z})$ denote the orthogonal projections onto the vector spaces spanned by the columns of $Z$ and $Z- \widehat{Z}$, respectively.

    Then, for any $k \in \set{1, \dots, p-2}$, one has
    \begin{align*}
         \expect{\norm{[\I_n - \pi(Z)] A}_{2,F} - \norm{[\I_n - \pi(Z)] \ubar{A}_k}_{2,F}} \leq \frac{e \sqrt{r}}{r-p} \frac{\norm{\widehat{Z}}_2}{\sqrt{\lambda_r}} \norm{A_k}_{2,F} + \expect{\norm{[\I_n - \pi(Z- \widehat{Z})] A_k}_{2,F}},
    \end{align*}
    where $r$ denotes the rank of $\Covmat(Z)$ and $\lambda_r$ the smallest nonzero eigenvalue of $\Covmat(Z)$.
\end{proposition}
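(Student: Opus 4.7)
The plan is to combine the deterministic inequality proved within Theorem~\ref{thm:1} with a projection perturbation argument that separates the influence of the mean $\widehat{Z}$ from that of the centered Gaussian matrix $\tilde{Z} := Z - \widehat{Z}$. First, I observe that the proof of Theorem~\ref{thm:1} actually establishes
\begin{equation*}
\norm{[\I_n - \pi(Z)] A}_{2,F}^2 \leq \norm{[\I_n - \pi(Z)]\ubar{A}_k}_{2,F}^2 + \norm{[\I_n - \pi(Z)] A_k}_{2,F}^2,
\end{equation*}
so the elementary inequality $\sqrt{a^2 + b^2} \leq a + b$ for $a, b \geq 0$ yields
\begin{equation*}
\norm{[\I_n - \pi(Z)] A}_{2,F} - \norm{[\I_n - \pi(Z)] \ubar{A}_k}_{2,F} \leq \norm{[\I_n - \pi(Z)] A_k}_{2,F}.
\end{equation*}
After taking expectations, it therefore suffices to control $\expect{\norm{[\I_n - \pi(Z)] A_k}_{2,F}}$, which requires bringing $\tilde{Z}$ into the picture.

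Second, I use the splitting $\I_n = [\I_n - \pi(\tilde{Z})] + \pi(\tilde{Z})$ together with the triangle inequality. The key algebraic ingredient---where the projection perturbation results of~\cite{DrineasIpsen_LowRankMatrixApproximations_2019} enter---is the identity
\begin{equation*}
[\I_n - \pi(Z)] \pi(\tilde{Z}) = [\I_n - \pi(Z)] \tilde{Z} \tilde{Z}\pinv = -[\I_n - \pi(Z)] \widehat{Z} \tilde{Z}\pinv,
\end{equation*}
where the first equality uses $\pi(\tilde{Z}) = \tilde{Z} \tilde{Z}\pinv$ and the second combines $[\I_n - \pi(Z)] Z = 0$ with $\tilde{Z} = Z - \widehat{Z}$. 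Invoking submultiplicativity~\eqref{eq:submultiplicativity} together with $\norm{\I_n - \pi(Z)}_2 \leq 1$, I then obtain the deterministic perturbation bound
\begin{equation*}
\norm{[\I_n - \pi(Z)] A_k}_{2,F} \leq \norm{[\I_n - \pi(\tilde{Z})] A_k}_{2,F} + \norm{\widehat{Z}}_2 \norm{\tilde{Z}\pinv}_2 \norm{A_k}_{2,F},
\end{equation*}
which cleanly separates a centered contribution from a mean contribution.

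Third, I take the expectation and control $\expect{\norm{\tilde{Z}\pinv}_2}$. By~\eqref{eq:gaussian_matrix_relation}, $\tilde{Z} = \Covmat(Z)\half G$ with $G$ an $n \times p$ standard Gaussian matrix. Writing the thin spectral decomposition $\Covmat(Z) = V_\Sigma \Lambda V_\Sigma\tra$, with $V_\Sigma \in \R^{n \times r}$ having orthonormal columns and $\Lambda \in \R^{r \times r}$ diagonal with entries $\lambda_1 \geq \dots \geq \lambda_r > 0$, the matrix $H := V_\Sigma\tra G \in \R^{r \times p}$ is itself a standard Gaussian, and since $\lambda_r H\tra H \lleq H\tra \Lambda H = \tilde{Z}\tra \tilde{Z}$ one deduces $\norm{\tilde{Z}\pinv}_2 \leq \norm{H\pinv}_2 / \sqrt{\lambda_r}$. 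Applying the classical bound $\expect{\norm{H\pinv}_2} \leq e \sqrt{r}/(r-p)$, obtained from~\cite[Proposition~10.2]{HalkoMartinssonEtAl_FindingStructureRandomness_2011} applied to $H\tra$, and combining with the two previous steps yields the stated result. The main obstacle is the algebraic manipulation of Step~2: although each step is elementary, it is the cancellation $[\I_n - \pi(Z)] Z = 0$, applied through the auxiliary matrix $\tilde{Z}$, that trades a bound involving the nontrivial mean $\widehat{Z}$ for one involving only $\pi(\tilde{Z})$ (amenable to the stochastic lemmas of Section~\ref{sec:2:sub:2:sub:1}) plus an explicit bias term.
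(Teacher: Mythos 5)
Your proof is correct and follows essentially the same route as the paper: reduce to $\expect{\norm{[\I_n - \pi(Z)]A_k}_{2,F}}$, split off the centered matrix $W = Z - \widehat{Z}$, exploit the cancellation $[\I_n - \pi(Z)]Z = 0$ to isolate the bias term $\norm{\widehat{Z}}_2 \norm{W\pinv}_2 \norm{A_k}_{2,F}$, and bound $\expect{\norm{W\pinv}_2}$ through the thin eigendecomposition of $\Covmat(Z)$ together with the standard Gaussian pseudoinverse estimate. The only (harmless) difference is that you insert $\I_n = [\I_n - \pi(W)] + \pi(W)$ to the right of $\I_n - \pi(Z)$ rather than writing $\I_n - \pi(Z) = [\I_n - \pi(W)] + [\pi(W) - \pi(Z)]$, which lets you bypass the norm identity $\norm{\pi(W) - \pi(Z)}_2 = \norm{[\I_n - \pi(Z)]\pi(W)}_2$ from Drineas--Ipsen that the paper invokes.
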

\begin{proof}
    First, we begin with the trivial observation that
    \begin{equation*}
        \norm{[\I_n - \pi(Z)] A}_{2, F} - \norm{[\I_n - \pi(Z)] \ubar{A}_k}_{2, F} \leq \norm{[\I_n - \pi(Z)] A_k}_{2,F}.
    \end{equation*}
    Let us define $W = Z - \widehat{Z}$, which is a centered Gaussian matrix with covariance matrix $\Covmat(W) = \Covmat(Z)$. It yields
    \begin{align*}
        \norm{[\I_n - \pi(Z)] A_k}_{2,F} & = \norm{[\I_n - \pi(W) + \pi(W) - \pi(Z)] A_k}_{2,F}, \\
        & = \norm{[\I_n - \pi(W)] A_k + [\pi(W) - \pi(Z)] A_k}_{2,F}, \\
        & \leq  \norm{[\I_n - \pi(W)] A_k}_{2,F} + \norm{[\pi(W) - \pi(Z)] A_k}_{2,F}, \\
        & \leq  \norm{[\I_n - \pi(W)] A_k}_{2,F} + \norm{[\pi(W) - \pi(Z)]}_2 \norm{A_k}_{2,F}, \\
        & = \norm{[\I_n - \pi(W)] A_k}_{2,F} + \norm{[\I_n - \pi(Z)] \pi(W)}_2 \norm{A_k}_{2,F},
    \end{align*}
    where the last inequality follows from~\cite[Lemma 1.5]{DrineasIpsen_LowRankMatrixApproximations_2019}. Then, adapting the proof of~\cite[Theorem 2.2]{DrineasIpsen_LowRankMatrixApproximations_2019}, we observe that
    \begin{align*}
        [\I_n - \pi(Z)] \pi(W) & = [\I_n - \pi(Z)] W W\pinv, \\
        & = [\I_n - \pi(Z)] (W - Z + Z) W\pinv, \\
        & = [\I_n - \pi(Z)] (W - Z) W\pinv  + [\I_n - \pi(Z)] Z W\pinv, \\
        & = [\I_n - \pi(Z)] (W - Z) W\pinv.
    \end{align*}
    Therefore, taking the spectral norm and using the submultiplicativity yields
    \begin{equation*}
        \norm{[\I_n - \pi(Z)] \pi(W)}_2 \leq
        \norm{W - Z}_2 \norm{W\pinv}_2 = \norm{\widehat{Z}}_2 \norm{W\pinv}_2.
    \end{equation*}
    Overall, we have
    \begin{equation*}
        \norm{[\I_n - \pi(Z)] A_k}_{2,F} \leq
        \norm{[\I_n - \pi(W)] A_k}_{2,F} + \norm{\widehat{Z}}_2 \norm{W\pinv}_2 \norm{A_k}_{2,F}.
    \end{equation*}
    Since $W \in \Rnp$ is full column rank, one has $W\pinv = (W\tra W)\inv W\tra$ and due to the unitarily invariance of the spectral norm,
    \begin{equation*}
        \norm{W\pinv}_2 = \norm{(W\tra W)\halfinv}_2 = \sqrt{\norm{(W\tra W)\inv}_2}.
    \end{equation*}
    Given $r = \rank(\Covmat{Z})$, the covariance matrix $\Covmat(Z)$ has the following thin eigendecomposition, $\Covmat(Z) = Q_r \Lambda_r Q_r\tra $
    where $Q_r \in \Rnr$ is an orthogonal matrix containing the first $\lambda_r$ eigenvectors of $\Covmat(Z)$ and $\Lambda_r = \diag(\lambda_1, \dots, \lambda_r)$ contains the corresponding nonzero eigenvalues sorted in decreasing order, i.e. $\lambda_1 \geq \dots \geq \lambda_r >0$. Then, using~\eqref{eq:gaussian_matrix_relation}, one has $W = \Covmat(Z)\half G$ with $G \in \Rnp$ such that $G \sim \mathcal{N}(0, \I_n)$. Thus,
    \begin{equation*}
        W\tra W = G\tra \Covmat(Z) G = G\tra Q_r \Lambda_r Q_r\tra G = Y_r\tra \Lambda_r Y_r \ggeq \lambda_r Y_r\tra Y_r,
    \end{equation*}
    where $Y_r = Q_r\tra G \sim \mathcal{N}(0, \I_r)$.
    Then, $(W\tra W)\inv \lleq \lambda_r\inv (Y_r\tra Y_r)\inv$ leading to
    \begin{equation*}
        \norm{W\pinv}_2 \leq
        \frac{1}{\sqrt{\lambda_r}} \sqrt{\norm{(Y_r\tra Y_r)\inv}_2} =
        \frac{1}{\sqrt{\lambda_r}} \norm{Y_r\pinv}_2 =
        \frac{1}{\sqrt{\lambda_r}} \norm{(Y_r\tra)\pinv}_2,
    \end{equation*}
    with $Y_r\tra \in \R^{p \times r}$ (with $p \leq r$ by assumption) and $Y_r\tra \sim \mathcal{N}(0, \I_p)$. We then take the expectation and apply Lemma~\ref{lem:5} (with $N = \I_r$) to deduce an upper bound of $\expect{\norm{(Y_r\tra)\pinv}_2}$, which completes the proof.
\end{proof}

We are now able to state the main theorems concerning the error bounds in expectation. Namely, given $Z \sim \mathcal{N}(\widehat{Z}, \Covmat(Z))$, we target to bound the following quantity
\begin{equation*}
    \expect{\norm{[\I_n - \pi(Z)] A}_{2,F} - \norm{[\I_n - \pi(Z)] \ubar{A}_k}_{2,F}}.
\end{equation*}


The proof of the next two theorems can be  straightforwardly obtained by using the inequality $\sqrt{a^2 + b^2} \leq a + b$ for any real positive scalars $a$ and $b$, and then by taking the expectation of~\eqref{deterministic_bound_F_2} of Theorem~\ref{thm:1}. The right-hand side terms are bounded using Lemma~\ref{lem:6} (with $N = \Sigma_k$) and Propositions~\ref{prop:1} and~\ref{prop:2}, respectively. We first state our error bound in the Frobenius norm case.

\begin{theorem}\label{thm:3}(Average analysis error bound in Frobenius norm)
    Let $A \in \Rnm$ such that $n \geq m$ and $Z \in \Rnp$ a Gaussian matrix of mean $\widehat{Z} \in \Rnp$ and covariance matrix $\Covmat(Z)$, that is, $Z \sim \mathcal{N}(\widehat{Z}, \Covmat(Z))$, satisfying $2 < p \leq \min\set{\rank(A), \, \rank(\Covmat(Z))}$. Let $\pi(Z)$ denote the orthogonal projection onto the vector space spanned by the columns of $Z$. For a given $k \in \set{1, \dots, p-2}$, set $\Omega_k = U_k\tra (Z - \widehat{Z}) \in \R^{k \times p}$, $\ubar{\Omega}_k = \ubar{U}_k\tra  (Z - \widehat{Z}) \in \R^{(n-k) \times p}$ such that $\Omega_k$ is full row rank (i.e. $\rank(\Omega_k) = k$). Let  $\varphi : x \mapsto x / \sqrt{1 + x^2}$ for $x \geq 0$.

    If the covariance matrix $\Covmat_k(Z)$ is non-singular, then  one has
    \begin{equation} \label{proba_bound_norm_F_exp_b}
        \begin{aligned}
            & \expect{\norm{[\I_n - \pi(Z)] A}_{F} - \norm{[\I_n - \pi(Z)] \ubar{A}_k}_{F} } \leq \\
            & \quad \quad \quad \frac{e \sqrt{r}}{r-p} \frac{\norm{\widehat{Z}}_2}{\sqrt{\lambda_r}} \norm{A_k}_{F} +
            \min \set{\sqrt{a_k},~\sqrt{k}~\varphi \left(\dfrac{1}{\sqrt{k}}~\sqrt{b_k} \right) \norm{\Sigma_k}_2},
        \end{aligned}
    \end{equation}
    where  $r$ denotes the rank of $\Covmat(Z)$, $\lambda_r$ the smallest nonzero eigenvalue of $\Covmat(Z)$ and
    \begin{align*}
        a_k & = \norm{\Covmat_{\perp,k}(Z) [\Covmat_k(Z)]\inv \Sigma_k}_F^2 +
        \frac{\norm{\Covmat\left(\ubar{\Omega}_k \mid \Omega_k\right)\half}_F^2\norm{(\Sigma_k\tra [\Covmat_k(Z)]\inv \Sigma_k)\half}_F^2}{p-k-1}, \\
        b_k & = \norm{\Covmat_{\perp,k}(Z) [\Covmat_k(Z)]\inv}^2_{F} + \frac{\norm{\Covmat\left(\ubar{\Omega}_k \mid \Omega_k\right)\half}_F^2\norm{[\Covmat_k(Z)]\halfinv}_F^2}{p-k-1},
    \end{align*}
    with {$\Covmat_k(Z) = U_k\tra \Covmat(Z) U_k$, $\Covmat_{\perp,k}(Z)= \ubar{U}_k\tra \Covmat(Z) U_k$,   $\uwidebar{\Covmat}_k(Z) = \ubar{U}_k\tra \Covmat(Z) \ubar{U}_k$ and $\Covmat\left(\ubar{\Omega}_k \mid \Omega_k\right) = \uwidebar{\Covmat}_k(Z) - \Covmat_{\perp,k}(Z) \left[\Covmat_k(Z)\right]\inv \Covmat_{\perp,k}(Z)\tra$}.
\end{theorem}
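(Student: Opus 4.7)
The plan is to combine the deterministic bound~\eqref{deterministic_bound_F_2} of Theorem~\ref{thm:1} with the stochastic preparation of Section~\ref{sec:2:sub:2:sub:1} and the reduction provided by Proposition~\ref{prop:2}. First I would apply Proposition~\ref{prop:2}: it already extracts the contribution of the non-zero mean $\widehat{Z}$, producing precisely the term $\frac{e\sqrt{r}}{r-p}\frac{\norm{\widehat{Z}}_2}{\sqrt{\lambda_r}}\norm{A_k}_F$ of the announced bound and reducing the problem to bounding $\expect{\norm{[\I_n - \pi(W)] A_k}_F}$ for the centered Gaussian matrix $W = Z - \widehat{Z}$. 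This substitution is essential because the preparatory lemmas, and in particular Lemma~\ref{lem:6}, are stated for centered Gaussian matrices; moreover, with this substitution the matrices $\Omega_k = U_k\tra W$ and $\ubar{\Omega}_k = \ubar{U}_k\tra W$ defining $T_k$ and $S_k$ coincide with those used in the theorem.

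Next, I would apply the middle inequality of~\eqref{deterministic_bound_F_2} of Theorem~\ref{thm:1} to the matrix $W$, namely $\norm{[\I_n - \pi(W)] A_k}_F^2 \leq \min\set{\norm{S_k}_F^2 \norm{\Sigma_k}_2^2,~ \norm{T_k \Sigma_k}_F^2}$. Taking square roots, which commutes with the minimum of nonnegative quantities, delivers $\norm{[\I_n - \pi(W)] A_k}_F \leq \min\set{\norm{S_k}_F \norm{\Sigma_k}_2,~ \norm{T_k \Sigma_k}_F}$, and then taking expectation together with the elementary inequality $\expect{\min(X,Y)} \leq \min\set{\expect{X}, \expect{Y}}$ decouples the two candidate upper bounds. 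The inequality $\sqrt{a^2+b^2} \leq a+b$ for $a,b\geq 0$ is invoked, implicitly, inside the proof of Proposition~\ref{prop:2}: it is what converts the Pythagorean identity $\norm{[\I_n-\pi(Z)]A}_F^2 = \norm{[\I_n-\pi(Z)]A_k}_F^2 + \norm{[\I_n-\pi(Z)]\ubar{A}_k}_F^2$ into an additive upper bound on the difference $\norm{[\I_n-\pi(Z)]A}_F - \norm{[\I_n-\pi(Z)]\ubar{A}_k}_F$ that appears in the theorem statement.

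At this stage the two surviving expectations close in a straightforward manner. For $\expect{\norm{S_k}_F}$ I would invoke Proposition~\ref{prop:1}, which yields $\expect{\norm{S_k}_F} \leq \sqrt{k}\,\varphi\bigl(\frac{1}{\sqrt{k}}\sqrt{\CtotF(\Covmat(Z),\I_k)}\bigr)$; direct inspection of~\eqref{eq:expectation_norm_omega_k:C_dep}--\eqref{eq:expectation_norm_omega_k:CF} with $N=\I_k$ shows that $\CtotF(\Covmat(Z),\I_k)$ coincides with $b_k$. For $\expect{\norm{T_k \Sigma_k}_F}$ I would use Jensen's inequality $\expect{\norm{T_k\Sigma_k}_F} \leq \sqrt{\expect{\norm{T_k\Sigma_k}_F^2}}$ together with~\eqref{eq:expectation_norm_omega_k:F} of Lemma~\ref{lem:6} instantiated at $N=\Sigma_k$, which gives $\expect{\norm{T_k\Sigma_k}_F^2} = \CtotF(\Covmat(Z),\Sigma_k) = a_k$. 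Substituting the two bounds inside the minimum and then into the inequality produced by Proposition~\ref{prop:2} assembles the stated bound.

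The main obstacle I anticipate is notational rather than conceptual: one must carefully verify that the constants $a_k$ and $b_k$ appearing in the statement match exactly the quantities $\CtotF(\Covmat(Z),N)$ produced by Lemma~\ref{lem:6} under the substitutions $N=\Sigma_k$ and $N=\I_k$, including the correct expression for the conditional covariance $\Covmat(\ubar{\Omega}_k \mid \Omega_k) = \uwidebar{\Covmat}_k(Z) - \Covmat_{\perp,k}(Z)[\Covmat_k(Z)]\inv \Covmat_{\perp,k}(Z)\tra$ arising from Lemma~\ref{lem:3}. Once this bookkeeping is cleared, and granted the almost-sure full-row-rank condition on $\Omega_k$ ensured by the non-singularity of $\Covmat_k(Z)$, every remaining step is an immediate specialization of an already established result, so no further technical difficulty is expected.
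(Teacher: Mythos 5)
Your proposal is correct and follows essentially the same route as the paper: the authors likewise obtain Theorem~\ref{thm:3} by combining Proposition~\ref{prop:2} (which extracts the mean term $\frac{e\sqrt{r}}{r-p}\frac{\norm{\widehat{Z}}_2}{\sqrt{\lambda_r}}\norm{A_k}_F$ and reduces to the centered matrix $W = Z - \widehat{Z}$), the deterministic bound~\eqref{deterministic_bound_F_2} of Theorem~\ref{thm:1}, the inequality $\sqrt{a^2+b^2}\leq a+b$, and then Proposition~\ref{prop:1} and Lemma~\ref{lem:6} with $N=\Sigma_k$ to bound the two branches of the minimum. Your identification of $a_k = \CtotF(\Covmat(Z),\Sigma_k)$ and $b_k = \CtotF(\Covmat(Z),\I_k)$ is exactly the bookkeeping the paper leaves implicit.
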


\begin{remark}
We note that in the case of $\widehat{Z}=0$, we are able to improve the tightness of our error bound in the Frobenius norm. More precisely, we have
    \begin{align*}
    & \expect{\norm{[\I_n - \pi(Z)] A}^2_{F} - \norm{[\I_n - \pi(Z)] \ubar{A}_k}^2_{F} } \leq \min \set{a_k,~k ~\varphi \left( \dfrac{1}{\sqrt{k}} ~\sqrt{b_k} \right)^2 \norm{\Sigma_k}^2_2},
    \end{align*}
    where $a_k$ and $b_k$ are defined in Theorem~\ref{thm:3}.
\end{remark}

Similarly, we state our error bound in the spectral norm next.

\begin{theorem}\label{thm:4}(Average analysis error bound in spectral norm)
    Let $A \in \Rnm$ such that $n \geq m$ and $Z \in \Rnp$ a Gaussian matrix of mean $\widehat{Z} \in \Rnp$ and covariance matrix $\Covmat(Z)$, that is, $Z \sim \mathcal{N}(\widehat{Z}, \Covmat(Z))$, satisfying $2 < p \leq \min\set{\rank(A), \, \rank(\Covmat(Z))}$. Let $\pi(Z)$ denote the orthogonal projection onto the vector space spanned by the columns of $Z$. For a given $k \in \set{1, \dots, p-2}$, set $\Omega_k = U_k\tra (Z - \widehat{Z}) \in \R^{k \times p}$, $\ubar{\Omega}_k = \ubar{U}_k\tra (Z -\widehat{Z}) \in \R^{(n-k) \times p}$ such that $\Omega_k$ is full row rank (i.e. $\rank(\Omega_k) = k$). Let $\varphi : x \mapsto x / \sqrt{1 + x^2}$ for $x \geq 0$.

    If the covariance matrix $\Covmat_k(Z)$ is non-singular, then  one has
    \begin{equation} \label{proba_bound_norm_2_exp_b}
        \begin{aligned}
            & \expect{\norm{[\I_n - \pi(Z)] A}_{2} - \norm{[\I_n - \pi(Z)] \ubar{A}_k}_{2} } \leq \frac{e \sqrt{r}}{r-p} \frac{\norm{\widehat{Z}}_2}{\sqrt{\lambda_r}} \norm{A_k}_{2} +
            \min \set{c_k, ~\varphi(d_k) \norm{\Sigma_k}_2},
        \end{aligned}
    \end{equation}
    where  $r$ denotes the rank of $\Covmat(Z)$, $\lambda_r$ the smallest nonzero eigenvalue of $\Covmat(Z)$ and
    \begin{align*}
        c_k & = \norm{\Covmat_{\perp,k}(Z) [\Covmat_k(Z)]\inv \Sigma_k}_{2} +
        \frac{\norm{\Covmat\left(\ubar{\Omega}_k \mid \Omega_k\right)\half}_2 \norm{(\Sigma_k\tra [\Covmat_k(Z)]\inv \Sigma_k)\half}_F}{\sqrt{p-k-1}} \\
        & \quad + \frac{e\sqrt{p}}{{p-k}}
        \norm{\Covmat\left(\ubar{\Omega}_k \mid \Omega_k\right)\half}_F \norm{(\Sigma_k\tra [\Covmat_k(Z)]\inv \Sigma_k)\half}_2 ,\\
        d_k & = \norm{\Covmat_{\perp,k}(Z) [\Covmat_k(Z)]\inv}_{2} + \frac{\norm{\Covmat\left(\ubar{\Omega}_k \mid \Omega_k\right)\half}_2 \norm{[\Covmat_k(Z)]\halfinv}_F}{\sqrt{p-k-1}}\\
        & \quad + \frac{e\sqrt{p}}{{p-k}}
        \norm{\Covmat\left(\ubar{\Omega}_k \mid \Omega_k\right)\half}_F \norm{[\Covmat_k(Z)]\halfinv}_2,
    \end{align*}
    with {$\Covmat_k(Z) = U_k\tra \Covmat(Z) U_k$,  $\Covmat_{\perp,k}(Z) = \ubar{U}_k\tra \Covmat(Z) U_k$,  $\uwidebar{\Covmat}_k(Z) = \ubar{U}_k\tra \Covmat(Z) \ubar{U}_k$ and $\Covmat\left(\ubar{\Omega}_k \mid \Omega_k\right) = \uwidebar{\Covmat}_k(Z) - \Covmat_{\perp,k}(Z) \left[\Covmat_k(Z)\right]\inv \Covmat_{\perp,k}(Z)\tra$}.
\end{theorem}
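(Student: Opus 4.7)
The plan is to combine the deterministic bound of Theorem~\ref{thm:1} with the three probabilistic ingredients just developed: Proposition~\ref{prop:2} to transfer the analysis from $Z$ to its centered counterpart $W = Z - \widehat{Z}$, Proposition~\ref{prop:1} to control $\expect{\norm{S_k}_2}$, and Lemma~\ref{lem:6} applied with $N = \Sigma_k$ to control $\expect{\norm{T_k \Sigma_k}_2}$. The argument closely mirrors the Frobenius-norm derivation of Theorem~\ref{thm:3}; only the scalar bounds differ.

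First, I would invoke Proposition~\ref{prop:2} to peel off the contribution of the mean. This produces the additive term $\frac{e\sqrt{r}}{r-p} \frac{\norm{\widehat{Z}}_2}{\sqrt{\lambda_r}} \norm{A_k}_2$ and reduces the task to bounding $\expect{\norm{[\I_n - \pi(W)] A_k}_2}$ with $W = Z - \widehat{Z}$. Since $p \leq \rank(\Covmat(Z))$, $W$ is almost surely full column rank, and the quantities $\Omega_k = U_k\tra W$, $\ubar{\Omega}_k = \ubar{U}_k\tra W$, $T_k$, and $S_k$ are exactly those introduced in the theorem statement and used in Lemma~\ref{lem:6}.

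Second, applying the spectral case of Theorem~\ref{thm:1} pointwise to $W$ yields $\norm{[\I_n - \pi(W)] A_k}_2^2 \leq \min\{\norm{S_k}_2^2 \norm{\Sigma_k}_2^2,\, \norm{T_k \Sigma_k}_2^2\}$. Equivalently, combining Theorem~\ref{thm:1} with the inequality $\sqrt{a^2+b^2} \leq a + b$ applied to $\norm{[\I_n - \pi(W)] A}_2^2 \leq \norm{[\I_n - \pi(W)] \ubar{A}_k}_2^2 + \min\{\cdots\}$ gives $\norm{[\I_n - \pi(W)] A_k}_2 \leq \min\{\norm{S_k}_2 \norm{\Sigma_k}_2,\, \norm{T_k \Sigma_k}_2\}$. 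Passing to expectations and commuting them with the minimum then produces
\[
\expect{\norm{[\I_n - \pi(W)] A_k}_2} \leq \min\bigl\{\expect{\norm{S_k}_2}\,\norm{\Sigma_k}_2,\; \expect{\norm{T_k \Sigma_k}_2}\bigr\}.
\]

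Third, Proposition~\ref{prop:1} gives $\expect{\norm{S_k}_2} \leq \varphi\bigl(\CtotSp(\Covmat(Z), \I_k)\bigr)$, and Lemma~\ref{lem:6} with $N = \Sigma_k$ gives $\expect{\norm{T_k \Sigma_k}_2} \leq \CtotSp(\Covmat(Z), \Sigma_k)$. Assembling these bounds with the output of Proposition~\ref{prop:2} produces the stated inequality. The main obstacle I anticipate is the bookkeeping step that closes the argument: one must verify that the three pieces inside $\CtotSp$, namely $\CdepSp(\Covmat(Z), \cdot)$ together with the two summands of $\CSp(\Covmat(Z), \cdot)$ (one carrying the factor $1/\sqrt{p-k-1}$, the other $e\sqrt{p}/(p-k)$), match exactly the three explicit terms appearing in $c_k$ and $d_k$, with the correct pairing of spectral and Frobenius norms on $\Covmat(\ubar{\Omega}_k \mid \Omega_k)\half$.
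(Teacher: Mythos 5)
Your proposal is correct and follows exactly the route the paper sketches for this theorem: Proposition~\ref{prop:2} to extract the mean term, the middle inequality of Theorem~\ref{thm:1} applied to the centered matrix $W=Z-\widehat{Z}$, and then Proposition~\ref{prop:1} (yielding $\varphi(d_k)\norm{\Sigma_k}_2$ via $\CtotSp(\Covmat(Z),\I_k)=d_k$) together with Lemma~\ref{lem:6} with $N=\Sigma_k$ (yielding $c_k=\CtotSp(\Covmat(Z),\Sigma_k)$). The bookkeeping you flag does check out, and the exchange $\expect{\min\set{X,Y}}\leq\min\set{\expect{X},\expect{Y}}$ is valid, so nothing is missing.
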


As in Section~\ref{sec:2:sub:1}, if we target to bound the quantity of interest $\expect{\norm{[\I_n - \pi(Z)] A}_2} - \norm{\ubar{A}_k}_2$ instead of $\expect{\norm{[\I_n - \pi(Z)] A}_2 - \norm{[\I_n - \pi(Z)]\ubar{A}_k}_2}$, it is then possible to significantly improve the error bound given in Theorem~\ref{thm:4}. The proof is similar to the proof of Theorem~\ref{thm:4}, exception made that we take the expectation of the result given in Theorem~\ref{thm:2} and that the right-hand side terms are now bounded using Lemma~\ref{lem:6} with the choice $N = \widehat{\Sigma}_k$. We give this result in the next theorem.

\begin{theorem} \label{thm:5}(Average analysis error bound in spectral norm, improved bound)
    Let $A \in \Rnm$ such that $n \geq m$ and $Z \in \Rnp$ a Gaussian matrix of mean $\widehat{Z} \in \Rnp$ and covariance matrix $\Covmat(Z)$, that is, $Z \sim \mathcal{N}(\widehat{Z}, \Covmat(Z))$, satisfying $2 < p \leq \min\set{\rank(A), \, \rank(\Covmat(Z))}$. Let $\pi(Z)$ denote the orthogonal projection onto the vector space spanned by the columns of $Z$. For a given $k \in \set{1, \dots, p-2}$, set $\Omega_k = U_k\tra (Z - \widehat{Z}) \in \R^{k \times p}$, $\ubar{\Omega}_k = \ubar{U}_k\tra  (Z - \widehat{Z}) \in \R^{(n-k) \times p}$ such that $\Omega_k$ is full row rank (i.e. $\rank(\Omega_k) = k$). Let  $\varphi : x \mapsto x / \sqrt{1 + x^2}$ for $x \geq 0$.

    If the covariance matrix $\Covmat_k(Z)$ is non-singular, then, one has
    \begin{equation} \label{proba_bound_norm_2_exp}
             \expect{\norm{[\I_n - \pi(Z)] A}_{2}} - \norm{\ubar{A}_k}_{2}  \leq \frac{e \sqrt{r}}{r-p} \frac{\norm{\widehat{Z}}_2}{\sqrt{\lambda_r}} \norm{A_k}_{2}+
            \min \set{\widehat{c}_k,~\varphi(\widehat{d}_k) \norm{\widehat{\Sigma}_k}_2},
    \end{equation}
    where $\widehat{\Sigma}_k = \left(\Sigma_k^2 - \sigma_{k+1}^2 \I_k\right)\half$,  $r$ denotes the rank of $\Covmat(Z)$, $\lambda_r$ the smallest nonzero eigenvalue of $\Covmat(Z)$ and
    \begin{align*}
        \widehat{c}_k & = \norm{\Covmat_{\perp,k}(Z) [\Covmat_k(Z)]\inv \widehat{\Sigma}_k}_{2} +
        \frac{\norm{\Covmat\left(\ubar{\Omega}_k \mid \Omega_k\right)\half}_2 \norm{(\widehat\Sigma_k\tra [\Covmat_k(Z)]\inv \widehat\Sigma_k)\half}_F}{\sqrt{p-k-1}} \\
        & \quad + \frac{e\sqrt{p}}{{p-k}} \norm{\Covmat\left(\ubar{\Omega}_k \mid \Omega_k\right)\half}_F \norm{(\widehat\Sigma_k\tra [\Covmat_k(Z)]\inv \widehat\Sigma_k)\half}_2 ,\\
        \widehat{d}_k & = \norm{\Covmat_{\perp,k}(Z) [\Covmat_k(Z)]\inv}_{2} + \frac{\norm{\Covmat\left(\ubar{\Omega}_k \mid \Omega_k\right)\half}_2 \norm{[\Covmat_k(Z)]\halfinv}_F}{\sqrt{p-k-1}}\\
        & \quad + \frac{e\sqrt{p}}{{p-k}} \norm{\Covmat\left(\ubar{\Omega}_k \mid \Omega_k\right)\half}_F \norm{[\Covmat_k(Z)]\halfinv}_2,
    \end{align*}
    with {$\Covmat_k(Z) = U_k\tra \Covmat(Z) U_k$,  $\Covmat_{\perp,k}(Z)= \ubar{U}_k\tra \Covmat(Z) U_k$ and $\uwidebar{\Covmat}_k(Z) = \ubar{U}_k\tra \Covmat(Z) \ubar{U}_k$ and  $\Covmat\left(\ubar{\Omega}_k \mid \Omega_k\right) = \uwidebar{\Covmat}_k(Z) - \Covmat_{\perp,k}(Z) \left[\Covmat_k(Z)\right]\inv \Covmat_{\perp,k}(Z)\tra$}.
\end{theorem}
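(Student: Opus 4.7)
The plan is to mirror the proof of Theorem~\ref{thm:4} while starting from the improved deterministic inequality of Theorem~\ref{thm:2} in place of Theorem~\ref{thm:1}. The proof of Theorem~\ref{thm:2} actually establishes the additive estimate
\[
\norm{[\I_n - \pi(Z)] A}_2^2 \;\le\; \norm{\ubar{A}_k}_2^2 + \norm{[\I_n - \pi(Z)] U_k \widehat{\Sigma}_k}_2^2,
\]
so $\sqrt{a^2+b^2}\le a+b$ for $a,b\ge 0$ delivers $\norm{[\I_n - \pi(Z)] A}_2 - \norm{\ubar{A}_k}_2 \le \norm{[\I_n - \pi(Z)] U_k \widehat{\Sigma}_k}_2$. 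Taking expectations, the task reduces to bounding $\expect{\norm{[\I_n - \pi(Z)] U_k \widehat{\Sigma}_k}_2}$.

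Next I would absorb the non-trivial mean $\widehat{Z}$ by the shift trick used in Proposition~\ref{prop:2}. Setting $W = Z - \widehat{Z}$, which is a centered Gaussian with $\Covmat(W) = \Covmat(Z)$, the identity $[\I_n-\pi(Z)]\pi(W) = [\I_n-\pi(Z)](W-Z) W\pinv$ combined with the strong submultiplicativity~\eqref{eq:submultiplicativity} and the fact that $\norm{U_k\widehat{\Sigma}_k}_2 = \norm{\widehat{\Sigma}_k}_2 \le \norm{A_k}_2$ yields
\[
\norm{[\I_n - \pi(Z)] U_k \widehat{\Sigma}_k}_2 \;\le\; \norm{[\I_n - \pi(W)] U_k \widehat{\Sigma}_k}_2 + \norm{\widehat{Z}}_2 \norm{W\pinv}_2 \norm{A_k}_2.
\]
Taking expectations and bounding $\expect{\norm{W\pinv}_2}$ via Lemma~\ref{lem:5}, exactly as in the proof of Proposition~\ref{prop:2}, produces the mean-induced term $\frac{e\sqrt{r}}{r-p}\frac{\norm{\widehat{Z}}_2}{\sqrt{\lambda_r}}\norm{A_k}_2$.

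For the remaining centered expectation $\expect{\norm{[\I_n - \pi(W)] U_k \widehat{\Sigma}_k}_2}$, applying Lemma~\ref{lem:1} to $W$ and conjugating by $\widehat{\Sigma}_k$, i.e.\ repeating the chain that led to~\eqref{rel:thm1:3} with $\widehat{\Sigma}_k$ in place of $\Sigma_k$, gives
\[
\norm{[\I_n - \pi(W)] U_k \widehat{\Sigma}_k}_2 \;\le\; \min\bigl\{\,\norm{S_k}_2 \norm{\widehat{\Sigma}_k}_2,\; \norm{T_k \widehat{\Sigma}_k}_2\,\bigr\},
\]
where $S_k$ and $T_k$ are precisely the matrices defined from $W$ in Section~\ref{sec:2:sub:2}. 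Taking expectations, exchanging with the $\min$ by monotonicity of $\expect{\cdot}$, then invoking Proposition~\ref{prop:1} for the $\norm{S_k}_2$ branch (yielding $\varphi(\widehat{d}_k)$, since $\CtotSp(\Covmat(Z),\I_k) = \widehat{d}_k$) and Lemma~\ref{lem:6} with $N = \widehat{\Sigma}_k$ for the $\norm{T_k \widehat{\Sigma}_k}_2$ branch (yielding $\widehat{c}_k = \CtotSp(\Covmat(Z),\widehat{\Sigma}_k)$), one obtains the bound $\min\{\varphi(\widehat{d}_k)\norm{\widehat{\Sigma}_k}_2,\ \widehat{c}_k\}$. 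Summing the two contributions gives the claimed estimate.

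The main obstacle is purely notational rather than conceptual: one must carefully track which quantities live in the $Z$-world and which in the $W$-world, and verify that the constants produced by the $W$-shifted applications of Lemmas~\ref{lem:1} and~\ref{lem:6} coincide with the $\widehat{c}_k$ and $\widehat{d}_k$ stated in the theorem. Since $W$ is centered with $\Covmat(W) = \Covmat(Z)$, the matrices $\Omega_k = U_k\tra W$, $\ubar{\Omega}_k = \ubar{U}_k\tra W$, $T_k$ and $S_k$ generated by the shifted computation agree verbatim with those of Section~\ref{sec:2:sub:2}, so the identification of constants closes cleanly.
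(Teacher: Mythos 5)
Your proposal is correct and follows essentially the same route the paper takes (and merely sketches): take the square root of the improved deterministic bound of Theorem~\ref{thm:2} via $\sqrt{a^2+b^2}\le a+b$, split off the mean contribution with the projection-perturbation argument of Proposition~\ref{prop:2}, and bound the centered remainder using Proposition~\ref{prop:1} and Lemma~\ref{lem:6} with $N=\widehat{\Sigma}_k$. Your explicit verification that the shift trick applies to $U_k\widehat{\Sigma}_k$ (using $\norm{U_k\widehat{\Sigma}_k}_2\le\norm{A_k}_2$) and that the resulting constants coincide with $\widehat{c}_k$ and $\widehat{d}_k$ fills in exactly the details the paper leaves implicit.
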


\begin{remark}
    Since $\norm{\widehat{\Sigma}_k}_2 \leq \norm{\Sigma_k}_2$ and due to the partial ordering property, we deduce that $\widehat{c}_k \le c_k$ and remark that $\widehat{d}_k = d_k$. Hence if one targets to bound $\expect{\norm{[\I_n - \pi(Z)] A}_{2}} - \norm{\ubar{A}_k}_2$, then the error  bound~\eqref{proba_bound_norm_2_exp} is tighter than~\eqref{proba_bound_norm_2_exp_b}. Hence Theorem~\ref{thm:5} provides a new error bound in the spectral norm in a fairly general setting.
\end{remark}

We provide in Section~\ref{sec:4} numerical illustrations to highlight the potential of the error bounds.

\section{Application to the  Randomized Singular Value Decomposition} \label{sec:3}

As an illustration, we consider the Randomized Singular Value Decomposition (RSVD), a popular algorithm for obtaining a low-rank approximation to a given matrix~\cite{HalkoMartinssonEtAl_FindingStructureRandomness_2011,MartinssonTropp_RandomizedNumericalLinear_2020}. In this method, $Z \in \Rnp$ is constructed to approximate the $k$ dominant left singular vectors of $A$. Given $q \in \N$ and $A_q = (A A\tra)^q A$, the Randomized Singular Value Decomposition considers $Z = A_q G$. In the single-pass setting ($q=0$), error bounds have been notably provided in~\cite{HalkoMartinssonEtAl_FindingStructureRandomness_2011,Halko_RandomizedMethodsComputing_2012} for the spectral norm. For $q \in \N^{\star}$, we provide error bounds which, to the best of our knowledge, are new. As detailed next, the relation $Z \sim \mathcal{N}(0, A_q A_q\tra)$ leads to significant simplifications in the expressions of the error bounds given in Theorems~\ref{thm:3},~\ref{thm:4} and~\ref{thm:5}, respectively. Deriving these error bounds implies to first express the projected covariance matrices defined in~\eqref{eq:partitioning_covariance_matrix}. This is the main purpose of the next lemma.

\begin{lemma} \label{lem:3:rsvd}
    Let $A \in \Rnm$ such that $n \geq m$ and $Z \in \Rnp$ with $2 < p \leq \rank(A)$ such that $Z = A_q G$ with $A_q = (A A\tra)^q A \in \Rnm$ ($q \in \N$) and $G \in \Rmp$ a standard Gaussian matrix ($G \sim \mathcal{N}(0, \I_m)$). For a given integer $k \in \set{1, \dots, p}$, set $\Omega_k = U_k\tra Z$ and $\ubar{\Omega}_k = \ubar{U}_k\tra Z$. The projected covariance matrices are then given by
    \begin{equation}\label{eq:partitioning_covariance_matrix:rsvd}
        \begin{aligned}
            \Covmat_k(Z) & = \Sigma_k^{4q+2}, \\
            \Covmat_{\perp,k}(Z) & = 0,   \\
            \uwidebar{\Covmat}_k(Z) & = (\ubar{\Sigma}_k \ubar{\Sigma}_k\tra)^{2q+1}.
        \end{aligned}
    \end{equation}
    Then the random matrix $\ubar{\Omega}_k$ conditioned by $\Omega_k$ follows a Gaussian distribution of mean
    \begin{equation} \label{eq:conditional_dsitribution:mean:rsvd}
        \expect{\ubar{\Omega}_k \mid \Omega_k} = 0,
    \end{equation}
    and of covariance matrix
    \begin{equation} \label{eq:conditional_dsitribution:cov:rsvd}
        \Covmat\left(\ubar{\Omega}_k \mid \Omega_k\right) = (\ubar{\Sigma}_k \ubar{\Sigma}_k\tra)^{2q+1}.
    \end{equation}
\end{lemma}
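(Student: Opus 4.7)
The plan is to proceed in two short arithmetic stages: first identify the unconditional distribution of $Z$ via the SVD of $A$, read off the three block entries of the projected covariance matrix, and then invoke Lemma~\ref{lem:3} to obtain the conditional law of $\ubar{\Omega}_k$ given $\Omega_k$.

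First I would observe that $G \sim \mathcal{N}(0, \I_m)$ implies column-wise that $Z = A_q G \sim \mathcal{N}(0, A_q A_q\tra)$, so $\widehat{Z} = 0$ and $\Covmat(Z) = A_q A_q\tra$. Using the SVD $A = U \Sigma V\tra$, one has $(AA\tra)^q = U(\Sigma\Sigma\tra)^q U\tra$, hence $A_q = U(\Sigma\Sigma\tra)^q \Sigma V\tra$, and therefore
\begin{equation*}
\Covmat(Z) \;=\; A_q A_q\tra \;=\; U (\Sigma\Sigma\tra)^q \Sigma \Sigma\tra (\Sigma\Sigma\tra)^q U\tra \;=\; U (\Sigma\Sigma\tra)^{2q+1} U\tra.
\end{equation*}

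Next I would exploit the block diagonality of $\Sigma\Sigma\tra$ with respect to the partitioning~\eqref{eq:partitioning_svd}, namely
\begin{equation*}
\Sigma\Sigma\tra \;=\; \begin{bmatrix} \Sigma_k^2 & 0 \\ 0 & \ubar{\Sigma}_k \ubar{\Sigma}_k\tra \end{bmatrix},
\end{equation*}
so that $(\Sigma\Sigma\tra)^{2q+1}$ is again block-diagonal with blocks $\Sigma_k^{4q+2}$ and $(\ubar{\Sigma}_k \ubar{\Sigma}_k\tra)^{2q+1}$. Combined with $U_k\tra U_k = \I_k$, $\ubar{U}_k\tra \ubar{U}_k = \I_{n-k}$ and $U_k\tra \ubar{U}_k = 0$, the definitions in~\eqref{eq:partitioning_covariance_matrix} deliver \eqref{eq:partitioning_covariance_matrix:rsvd} directly; in particular the off-diagonal block $\Covmat_{\perp,k}(Z)$ vanishes because $U_k$ and $\ubar{U}_k$ project onto complementary eigenspaces of $\Covmat(Z)$.

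Finally I would conclude by applying Lemma~\ref{lem:3} to $Z$ (which is admissible since $\widehat{Z}=0$, $Z - \widehat{Z} = Z$, and $\Covmat_k(Z) = \Sigma_k^{4q+2}$ is non-singular under $p \leq \rank(A)$). Substituting $\Covmat_{\perp,k}(Z) = 0$ into~\eqref{eq:conditional_dsitribution:mean} yields $\expect{\ubar{\Omega}_k \mid \Omega_k} = 0$, and into~\eqref{eq:conditional_dsitribution:cov} yields $\Covmat(\ubar{\Omega}_k \mid \Omega_k) = \uwidebar{\Covmat}_k(Z) = (\ubar{\Sigma}_k \ubar{\Sigma}_k\tra)^{2q+1}$, which is~\eqref{eq:conditional_dsitribution:mean:rsvd}--\eqref{eq:conditional_dsitribution:cov:rsvd}. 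No genuine obstacle is anticipated; the only mild care is in tracking the non-square shape of $\ubar{\Sigma}_k \in \R^{(n-k)\times(m-k)}$ so as to write all powers in the form $\ubar{\Sigma}_k \ubar{\Sigma}_k\tra$ rather than $\ubar{\Sigma}_k^2$.
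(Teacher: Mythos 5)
Your proof is correct and follows essentially the same route as the paper: identify $\Covmat(Z) = A_q A_q\tra = U(\Sigma\Sigma\tra)^{2q+1}U\tra$ via the SVD, read off the block entries from the orthogonality of $U_k$ and $\ubar{U}_k$, and substitute $\Covmat_{\perp,k}(Z)=0$ into Lemma~\ref{lem:3}. The only detail the paper adds is the explicit observation that $\Omega_k = \Sigma_k^{2q+1}V_k\tra G$ has full row rank with probability one, which justifies invoking Lemma~\ref{lem:3}; your non-singularity remark covers the covariance condition but you could state this rank point explicitly.
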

\begin{proof}
    A straightforward calculation gives
    $\Omega_k = \Sigma_k^{2q+1} V_k\tra G$. Hence $\Omega_k$ has full row rank with probability one~\cite{FengZhang_RankRandomMatrix_2007}. Since $Z \sim \mathcal{N}(0,A_q A_q\tra)$,
    $\Covmat(Z)= A_q A_q\tra$ can be expressed as
    \begin{equation*}
        \Covmat(Z) = U (\Sigma \Sigma\tra)^{2q+1} U^T.
    \end{equation*}
    Thus $\Covmat(Z)$ and $A$ have the same rank and we deduce
    \begin{align*}
        \Covmat_k(Z) & = \Sigma_k^{4q+2}, \\
        \Covmat_{\perp,k}(Z) & = 0, \\
        \uwidebar{\Covmat}_k(Z) & = (\ubar{\Sigma}_k \ubar{\Sigma}_k\tra)^{2q+1}.
    \end{align*}
    We apply relations (\ref{eq:conditional_dsitribution:mean}) and (\ref{eq:conditional_dsitribution:cov}) of Lemma~\ref{lem:3} to deduce
    \begin{align*}
        \expect{ \ubar{\Omega}_k \mid \Omega_k} & = 0, \\
        \Covmat\left(\ubar{\Omega}_k \mid \Omega_k\right) & = (\ubar{\Sigma}_k \ubar{\Sigma}_k\tra)^{2q+1},
    \end{align*}
    which concludes the proof.
\end{proof}

\begin{remark}
    From Lemma~\ref{lem:3:rsvd}, we deduce that the projected covariance matrix $\Covmat_k(Z)$ is non-singular, allowing us to apply Theorems~\ref{thm:3},~\ref{thm:4} and~\ref{thm:5}, respectively. Since $\Covmat_{\perp,k}(Z) = 0$, we also note that $\ubar{\Omega}_k$ and $\Omega_k$ become statistically independent.
\end{remark}

With the help of relations~\eqref{eq:partitioning_covariance_matrix:rsvd},~\eqref{eq:conditional_dsitribution:mean:rsvd} and~\eqref{eq:conditional_dsitribution:cov:rsvd},  we specialize the main theorems proposed in Section~\ref{sec:2:sub:2} to the setting of the Randomized Singular Value Decomposition. To enhance the readability of the constants arising in the error bounds, we introduce the singular value ratios related to the matrix $A$ as
\begin{equation} \label{eq:ratio_singular_values}
    \gamma_i = \frac{\sigma_{k+1}}{\sigma_i}, \quad i = 1, \dots, \rank{(A)}.
\end{equation}

\subsection{Error bound in expectation in Frobenius norm} \label{sec:rsvd:frob}


We first consider the case of the Frobenius norm. A direct application of Theorem~\ref{thm:3} and Lemma~\ref{lem:3:rsvd} then leads to the following corollary.

\begin{corollary}(Average analysis error bound in Frobenius norm, RSVD) \label{thm:3:rsvd}
    Let $A \in \Rnm$ such that $n \geq m$ and $Z \in \Rnp$ with $2 < p \leq \rank(A)$ such that $Z = A_q G$ with $A_q = (A A\tra)^q A \in \Rnm$ ($q \in \N$) and $G \in \Rmp$ a standard Gaussian matrix ($G \sim \mathcal{N}(0, \I_m)$). Let $\pi(Z)$ denote the orthogonal projection onto the vector space spanned by the columns of $Z$. For a given $k \in \set{1, \dots, p-2}$, set $\Omega_k = U_k\tra Z \in \R^{k \times p}$ such that $\Omega_k$ is full row rank (i.e. $\rank(\Omega_k) = k$). Let  $\varphi : x \mapsto x / \sqrt{1 + x^2}$ for $x \geq 0$.

    Then, we have
    \begin{equation} \label{thm:3:rsvd:bound}
        \expect{\norm{[\I_n - \pi(Z)] A}_{F} - \norm{[\I_n - \pi(Z)] \ubar{A}_k}_{F} } \leq
        \min \set{\sqrt{a_k}, ~\sqrt{k} ~\varphi \left( \dfrac{1}{\sqrt{k}} ~\sqrt{b_k} \right) \sigma_1},
    \end{equation}
    where
    \begin{align*}
        a_k & = \frac{\sigma_{k+1}^2}{p-k-1} \left( \sum_{i=k+1}^{\rank{(A)}} \frac{1}{\gamma_i^{4q+2}} \right) \left(\sum_{i=1}^k {\gamma_i^{4q}} \right), \\
        b_k & = \frac{1}{p-k-1} \left( \sum_{i=k+1}^{\rank{(A)}} \displaystyle \frac{1}{\gamma_i^{4q+2}}
        \right) \left( \sum_{i=1}^k \gamma_i^{4q+2} \right),
    \end{align*}
with the singular value ratios $\gamma_i$ given in~\eqref{eq:ratio_singular_values}.
\end{corollary}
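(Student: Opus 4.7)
The plan is to derive Corollary~\ref{thm:3:rsvd} as a direct specialization of Theorem~\ref{thm:3}, using the closed-form expressions for the projected covariance matrices supplied by Lemma~\ref{lem:3:rsvd}. The key observation is that the RSVD choice $Z = A_q G$ with $G \sim \mathcal{N}(0,\I_m)$ yields $Z \sim \mathcal{N}(0, A_q A_q\tra)$, so the mean term $\widehat{Z}$ vanishes and the cross-covariance matrix $\Covmat_{\perp,k}(Z)$ is equal to $0$.

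First, I would verify that the hypotheses of Theorem~\ref{thm:3} are met: since $\Omega_k = \Sigma_k^{2q+1} V_k\tra G$ (as established in the proof of Lemma~\ref{lem:3:rsvd}), it is full row rank with probability one, and by Lemma~\ref{lem:3:rsvd} the projected covariance matrix $\Covmat_k(Z) = \Sigma_k^{4q+2}$ is non-singular. Then I would note that $\widehat{Z} = 0$ eliminates the first term of the bound~\eqref{proba_bound_norm_F_exp_b}, while $\Covmat_{\perp,k}(Z) = 0$ cancels the leading deterministic summands in the definitions of $a_k$ and $b_k$. It thus remains to evaluate the two surviving quantities
\begin{equation*}
\norm{\Covmat\!\left(\ubar{\Omega}_k \mid \Omega_k\right)\half}_F^2 \, \norm{(\Sigma_k\tra [\Covmat_k(Z)]\inv \Sigma_k)\half}_F^2
\quad \txtand \quad
\norm{\Covmat\!\left(\ubar{\Omega}_k \mid \Omega_k\right)\half}_F^2 \, \norm{[\Covmat_k(Z)]\halfinv}_F^2.
\end{equation*}

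Next, substituting $\Covmat_k(Z) = \Sigma_k^{4q+2}$ and $\Covmat(\ubar{\Omega}_k\mid\Omega_k) = (\ubar{\Sigma}_k \ubar{\Sigma}_k\tra)^{2q+1}$ from Lemma~\ref{lem:3:rsvd}, each of the Frobenius norms reduces to the trace of a power of a diagonal matrix. A direct computation gives $\norm{\Covmat(\ubar{\Omega}_k\mid\Omega_k)\half}_F^2 = \sum_{i=k+1}^{\rank(A)} \sigma_i^{4q+2}$, $\norm{(\Sigma_k\tra[\Covmat_k(Z)]\inv\Sigma_k)\half}_F^2 = \sum_{i=1}^k \sigma_i^{-4q}$ and $\norm{[\Covmat_k(Z)]\halfinv}_F^2 = \sum_{i=1}^k \sigma_i^{-(4q+2)}$. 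Factoring $\sigma_{k+1}$ out through the identity $\sigma_i = \sigma_{k+1}/\gamma_i$ rewrites every sum in terms of the ratios $\gamma_i$ defined in~\eqref{eq:ratio_singular_values} and produces precisely the claimed expressions for $a_k$ and $b_k$. Combined with $\norm{\Sigma_k}_2 = \sigma_1$, this yields~\eqref{thm:3:rsvd:bound}.

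I do not anticipate any substantive obstacle here: the argument is essentially a bookkeeping exercise that injects the structural identities of Lemma~\ref{lem:3:rsvd} into the general bound of Theorem~\ref{thm:3} and rewrites the resulting sums through the ratios $\gamma_i$. The only point demanding mild care is tracking the exponents of the $\sigma_i$ arising from $\Sigma_k^{-(4q+2)}$ and $(\ubar{\Sigma}_k \ubar{\Sigma}_k\tra)^{2q+1}$, and ensuring that the index range for the tail sum matches $\rank(A)$ rather than $m$ (since only the nonzero singular values contribute).
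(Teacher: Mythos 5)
Your proposal is correct and follows exactly the route the paper intends: the paper gives no separate proof for Corollary~\ref{thm:3:rsvd}, stating only that it is ``a direct application of Theorem~\ref{thm:3} and Lemma~\ref{lem:3:rsvd}'', which is precisely your specialization with $\widehat{Z}=0$, $\Covmat_{\perp,k}(Z)=0$, $\Covmat_k(Z)=\Sigma_k^{4q+2}$ and $\Covmat(\ubar{\Omega}_k\mid\Omega_k)=(\ubar{\Sigma}_k\ubar{\Sigma}_k\tra)^{2q+1}$. Your trace computations and the rewriting in terms of the ratios $\gamma_i$ check out (the powers of $\sigma_{k+1}$ cancel as required), so nothing is missing.
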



\begin{remark}
    In~\cite{BoulleTownsend_LearningEllipticPartial_2022}, Boullé and Townsend have considered the low-rank approximation of partial differential operators in infinite dimension using a zero mean and a general covariance matrix for the random matrix variable. They have recently improved their error bound to the finite-dimensional case using the Frobenius norm in~\cite{BoulleTownsend_GeneralizationRandomizedSingular_2022}. It can be shown that the error bound~\eqref{thm:3:rsvd:bound} provided in Corollary~\ref{thm:3:rsvd} is always tighter. An illustration is given in Section~\ref{sec:numerical:expect_comparison}.
\end{remark}

\subsection{Error bounds in expectation in spectral norm} \label{sec:rsvd:spec}

We next consider the case of the spectral norm. A straightforward application of Theorem~\ref{thm:4} and Lemma~\ref{lem:3:rsvd} first leads to the following corollary.

\begin{corollary}(Average analysis error bound in spectral norm, RSVD) \label{thm:4:rsvd}
    Let $A \in \Rnm$ such that $n \geq m$ and $Z \in \Rnp$  with $2 < p \leq \rank(A)$ such that $Z = A_q G$ with $A_q = (A A\tra)^q A \in \Rnm$ ($q \in \N $) and $G \in \Rmp$ a standard Gaussian matrix ($G \sim \mathcal{N}(0, \I_m)$). Let $\pi(Z)$ denote the orthogonal projection onto the vector space spanned by the columns of $Z$. For a given $k \in \set{1, \dots, p-2}$, set $\Omega_k = U_k\tra Z \in \R^{k \times p}$, such that $\Omega_k$ is full row rank (i.e. $\rank(\Omega_k) = k$). Let  $\varphi : x \mapsto x / \sqrt{1 + x^2}$ for $x \geq 0$.

    Then, we have
    \begin{equation} \label{thm:4:rsvd:bound}
        \expect{\norm{[\I_n - \pi(Z)] A}_{2} - \norm{[\I_n - \pi(Z)] \ubar{A}_k}_{2}}
        \leq \min \set{c_k, ~\varphi(d_k) \sigma_1},
    \end{equation}

    where
    \begin{align*}
        c_k & = \frac{\sigma_{k+1}}{\sqrt{p-k-1}} \left(\sum_{i=1}^k {\gamma_i^{4q}}\right)\half +
        \sigma_k \left(\sum_{i=k+1}^{\rank{(A)}} \left(\frac{\sigma_i}{\sigma_k}\right)^{4q+2}\right)\half \frac{e \sqrt{p}}{p-k},\\
        d_k & = \frac{1}{\sqrt{p-k-1}} \left(\sum_{i=1}^k {\gamma_i^{4q+2}}\right)\half +
        \left(\sum_{i=k+1}^{\rank{(A)}} \left(\frac{\sigma_i}{\sigma_k}\right)^{4q+2}\right)\half \frac{e \sqrt{p}}{p-k},
    \end{align*}
with the singular value ratios $\gamma_i$ given in~\eqref{eq:ratio_singular_values}.
\end{corollary}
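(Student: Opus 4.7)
The plan is to obtain Corollary~\ref{thm:4:rsvd} as a direct specialization of Theorem~\ref{thm:4} using the explicit form of the projected covariance matrices supplied by Lemma~\ref{lem:3:rsvd}. Since $Z = A_q G$ with $G \sim \mathcal{N}(0, I_m)$, one has $Z \sim \mathcal{N}(0, A_q A_q\tra)$, so in particular $\widehat{Z} = 0$. Consequently, the leading term $\frac{e\sqrt{r}}{r-p}\frac{\norm{\widehat{Z}}_2}{\sqrt{\lambda_r}}\norm{A_k}_2$ in~\eqref{proba_bound_norm_2_exp_b} vanishes, and the bound to be established reduces to $\min\{c_k, \varphi(d_k)\sigma_1\}$ with the constants given by Theorem~\ref{thm:4}.

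Next I would substitute the identities from Lemma~\ref{lem:3:rsvd}, namely $\Covmat_k(Z) = \Sigma_k^{4q+2}$, $\Covmat_{\perp,k}(Z) = 0$ and $\Covmat(\ubar{\Omega}_k \mid \Omega_k) = (\ubar{\Sigma}_k \ubar{\Sigma}_k\tra)^{2q+1}$, into the expressions for $c_k$ and $d_k$. The vanishing of $\Covmat_{\perp,k}(Z)$ immediately eliminates the first term in both $c_k$ and $d_k$ (the mean-dependence term), confirming the independence of $\Omega_k$ and $\ubar{\Omega}_k$ already noted after the lemma. The remaining factors become diagonal and reduce to:
\begin{align*}
\norm{\Covmat(\ubar{\Omega}_k \mid \Omega_k)\half}_2 &= \sigma_{k+1}^{2q+1}, &
\norm{\Covmat(\ubar{\Omega}_k \mid \Omega_k)\half}_F^2 &= \sum_{i=k+1}^{\rank(A)} \sigma_i^{4q+2}, \\
(\Sigma_k\tra [\Covmat_k(Z)]\inv \Sigma_k)\half &= \Sigma_k^{-2q}, &
[\Covmat_k(Z)]\halfinv &= \Sigma_k^{-(2q+1)}.
\end{align*}

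Then it is a matter of rearranging each term using the singular value ratios $\gamma_i = \sigma_{k+1}/\sigma_i$ defined in~\eqref{eq:ratio_singular_values}. For instance, in the $c_k$ estimate the term $\sigma_{k+1}^{2q+1}\bigl(\sum_{i=1}^k \sigma_i^{-4q}\bigr)\half/\sqrt{p-k-1}$ is rewritten by pulling out $\sigma_{k+1}^{-2q}$ from the sum, yielding $\sigma_{k+1}\bigl(\sum_{i=1}^k \gamma_i^{4q}\bigr)\half/\sqrt{p-k-1}$; the second term is handled analogously by factoring $\sigma_k^{-2q}$ and recognizing $\sigma_k \bigl(\sum_{i=k+1}^{\rank(A)}(\sigma_i/\sigma_k)^{4q+2}\bigr)\half$. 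The same bookkeeping applied to $d_k$ gives the stated expression. The non-singularity hypothesis of Theorem~\ref{thm:4} on $\Covmat_k(Z)$ is guaranteed because $\Sigma_k^{4q+2}$ is invertible for $k \le \rank(A)$, and the full row rank of $\Omega_k = \Sigma_k^{2q+1} V_k\tra G$ holds with probability one by~\cite{FengZhang_RankRandomMatrix_2007}, so Theorem~\ref{thm:4} genuinely applies.

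I do not anticipate a substantive obstacle: the only care required is to keep track of the exponents of the $\sigma_i$'s when trading between $\sigma_{k+1}$-normalized and $\sigma_k$-normalized expressions, so that the final constants match the two distinct normalizations used for the ``inside-$\Sigma_k$'' sum (appearing as $\sum \gamma_i^{4q}$ or $\sum \gamma_i^{4q+2}$) and the ``tail'' sum (appearing as $\sum (\sigma_i/\sigma_k)^{4q+2}$). Once this algebra is carried out, both terms of the minimum fall into place and the corollary follows.
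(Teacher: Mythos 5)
Your proposal is correct and is exactly the route the paper takes: the paper presents this corollary as "a straightforward application of Theorem~\ref{thm:4} and Lemma~\ref{lem:3:rsvd}," and your substitutions ($\widehat{Z}=0$ killing the leading term, $\Covmat_{\perp,k}(Z)=0$ killing the dependence terms, and the diagonal reductions of the remaining covariance factors) reproduce the stated constants $c_k$ and $d_k$ after the $\gamma_i$ bookkeeping, which I have verified. Nothing further is needed.
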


We finally state the improved error bound related to Theorem~\ref{thm:5} and Lemma~\ref{lem:3:rsvd} in the next corollary.

\begin{corollary}(Average analysis error bound in spectral norm, improved bound, RSVD) \label{thm:5:rsvd}
    Let $A \in \Rnm$ such that $n \geq m$ and $Z \in \Rnp$ with $2 < p \leq \rank(A)$ such that $Z = A_q G$ with $A_q = (A A\tra)^q A \in \Rnm$ ($q \in \N$) and $G \in \Rmp$ a standard Gaussian matrix ($G \sim \mathcal{N}(0, \I_m)$). Let $\pi(Z)$ denote the orthogonal projection onto the vector space spanned by the columns of $Z$. For a given $k \in \set{1, \dots, p-2}$, set $\Omega_k = U_k\tra Z \in \R^{k \times p}$, such that $\Omega_k$ is full row rank (i.e. $\rank(\Omega_k) = k$). Let  $\varphi : x \mapsto x / \sqrt{1 + x^2}$ for $x \geq 0$.

    Then, we have
    \begin{equation} \label{thm:5:rsvd:bound}
        \expect{\norm{[\I_n - \pi(Z)] A}_{2} - \sigma_{k+1}}
        \leq \min \set{\widehat{c}_k, ~\varphi(\widehat{d}_k) \sqrt{\sigma_1^2 - \sigma_{k+1}^2}},
    \end{equation}
    where
    \begin{align*}
        \widehat{c}_k & = \frac{\sigma_{k+1}}{\sqrt{p-k-1}} \left(\sum_{i=1}^k {\gamma_i^{4q}(1-\gamma_i^{2})}\right)\half + \sqrt{1-\gamma_\ell^2}~
        \sigma_\ell \left(\sum_{i=k+1}^{\rank{(A)}} \left(\frac{\sigma_i}{\sigma_\ell}\right)^{4q+2}\right)\half \frac{e \sqrt{p}}{p-k}, \\
        \widehat{d}_k & = \frac{1}{\sqrt{p-k-1}} \left(\sum_{i=1}^k {\gamma_i^{4q+2}}\right)\half +
        \left(\sum_{i=k+1}^{\rank{(A)}} \left(\frac{\sigma_i}{\sigma_k}\right)^{4q+2}\right)\half \frac{e \sqrt{p}}{p-k},
    \end{align*}
    with
    \begin{equation}\label{eq:index:l}
        \ell =
        \begin{dcases}
            ~ 1, \quad (q=0 \quad ~\mathrm{or~~if}~ \sigma_{k+1} \sqrt{1+1/(2q)} \ge \sigma_1 , \quad (q \in \N^{\star})), \\
            ~ \argmax_i \left( \frac{\sqrt{1 - \gamma_i^2}}{\sigma_i^{2q}}, \frac{\sqrt{1 - \gamma_{i+1}^2}}{\sigma_{i+1}^{2q}}\right), ~\mathrm{with}~ \sigma_{k+1} \sqrt{1+1/(2q)} \in
            [\sigma_i, \sigma_{i+1}], \quad (q \in \N^{\star}), \\
            ~ k, \quad ~\mathrm{if}~ \sigma_{k+1} \sqrt{1+1/(2q)} \le \sigma_k , \quad (q \in \N^{\star}),
        \end{dcases}
    \end{equation}
    with the singular value ratios $\gamma_i$ given in~\eqref{eq:ratio_singular_values}.
\end{corollary}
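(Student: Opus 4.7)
The plan is to apply Theorem~\ref{thm:5} directly, specializing the constants through Lemma~\ref{lem:3:rsvd}. Since here $\widehat{Z}=0$ and $Z \sim \mathcal{N}(0, A_q A_q\tra)$, the additive perturbation term $\tfrac{e\sqrt{r}}{r-p}\tfrac{\|\widehat{Z}\|_2}{\sqrt{\lambda_r}}\|A_k\|_2$ vanishes, and the covariance factors become diagonal via
\begin{equation*}
\Covmat_k(Z)=\Sigma_k^{4q+2},\qquad \Covmat_{\perp,k}(Z)=0,\qquad \Covmat\bigl(\ubar{\Omega}_k\mid\Omega_k\bigr)=(\ubar{\Sigma}_k\ubar{\Sigma}_k\tra)^{2q+1}.
\end{equation*}
In particular, the $\Covmat_{\perp,k}(Z)[\Covmat_k(Z)]\inv$ contributions drop out of both $\widehat{c}_k$ and $\widehat{d}_k$, so only the two terms involving the conditional covariance remain.

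Next, I would substitute these diagonal matrices into the definitions of $\widehat{c}_k$ and $\widehat{d}_k$ from Theorem~\ref{thm:5}. Using $\widehat{\Sigma}_k=(\Sigma_k^2-\sigma_{k+1}^2\I_k)\half$, the diagonal entries of $(\widehat{\Sigma}_k\tra[\Covmat_k(Z)]\inv\widehat{\Sigma}_k)\half$ are $\sqrt{1-\gamma_i^2}/\sigma_i^{2q}$ for $i=1,\ldots,k$; the singular values of $\Covmat(\ubar{\Omega}_k\mid\Omega_k)\half$ are $\sigma_i^{2q+1}$ for $i\ge k+1$; and the diagonal of $[\Covmat_k(Z)]\halfinv$ is $1/\sigma_i^{2q+1}$. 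Taking the required Frobenius and spectral norms and factoring $\sigma_{k+1}^{2q+1}$ or $\sigma_k^{2q+1}$ outside yields the closed-form expressions for $\widehat{d}_k$ and for the Frobenius-type piece of $\widehat{c}_k$ in terms of the ratios $\gamma_i$ defined in~\eqref{eq:ratio_singular_values}. The inequality $\|\widehat{\Sigma}_k\|_2=\sqrt{\sigma_1^2-\sigma_{k+1}^2}$ handles the $\varphi(\widehat{d}_k)\|\widehat{\Sigma}_k\|_2$ factor in~\eqref{proba_bound_norm_2_exp}, and $\norm{\ubar{A}_k}_2=\sigma_{k+1}$ yields the left-hand side of~\eqref{thm:5:rsvd:bound}.

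The main obstacle is the spectral-norm factor $\|(\widehat{\Sigma}_k\tra[\Covmat_k(Z)]\inv\widehat{\Sigma}_k)\half\|_2=\max_{1\le i\le k}\sqrt{1-\gamma_i^2}/\sigma_i^{2q}$, which must be rewritten in the form $\sqrt{1-\gamma_\ell^2}\,\sigma_\ell\bigl(\sum_{i\ge k+1}(\sigma_i/\sigma_\ell)^{4q+2}\bigr)\half /\sigma_\ell^{2q+1}\cdot\sigma_\ell^{2q+1}$, i.e.\ one must identify the explicit maximiser $\ell$. To locate it, I would treat the map $f(\sigma)=\sigma^{-(2q+1)}\sqrt{\sigma^2-\sigma_{k+1}^2}$ as a continuous function on $[\sigma_{k+1},\infty)$ and solve $f'(\sigma)=0$; a short logarithmic differentiation gives the unique interior critical point $\sigma^\star=\sigma_{k+1}\sqrt{1+1/(2q)}$ for $q\ge 1$, while for $q=0$ the function is monotonically increasing and the maximum over $\{\sigma_1,\ldots,\sigma_k\}$ is $\sigma_1$.

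Finally, a case analysis based on the position of $\sigma^\star$ relative to the discrete grid $\sigma_k\le\cdots\le\sigma_1$ produces the three branches of~\eqref{eq:index:l}: if $\sigma^\star\ge\sigma_1$ the monotonicity on $[\sigma_k,\sigma_1]$ forces $\ell=1$; if $\sigma^\star\le\sigma_k$ the function is decreasing on that interval, so $\ell=k$; otherwise $\sigma^\star$ lies between two consecutive values $\sigma_{i+1}$ and $\sigma_i$ and, since $f$ is unimodal, the discrete maximiser is whichever of these two endpoints yields the larger value of $\sqrt{1-\gamma_\cdot^2}/\sigma_\cdot^{2q}$. Assembling $\widehat{c}_k$ and $\widehat{d}_k$ with this choice of $\ell$ and applying~\eqref{proba_bound_norm_2_exp} of Theorem~\ref{thm:5} concludes the proof.
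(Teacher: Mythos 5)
Your proposal is correct and follows essentially the same route as the paper: specialize Theorem~\ref{thm:5} using the projected covariances of Lemma~\ref{lem:3:rsvd} (with the $\widehat{Z}=0$ term and the $\Covmat_{\perp,k}(Z)$ contributions vanishing), and identify the maximiser $\ell$ of $\sqrt{1-\gamma_i^2}/\sigma_i^{2q}$ by locating the critical point $\sigma_{k+1}\sqrt{1+1/(2q)}$ of the associated unimodal map, exactly as the paper does with $\psi(x)=(x-\sigma_{k+1}^2)/x^{2q+1}$ in the variable $x=\sigma^2$. The only difference is cosmetic (you differentiate in $\sigma$ rather than in $\sigma^2$), and your case analysis for the three branches of~\eqref{eq:index:l} matches the paper's.
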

\begin{proof}
    To deduce the expression of $\widehat{c}_k$, we need to obtain $\norm{(\widehat{\Sigma}_k\tra [\Covmat_k(Z)]\inv \widehat{\Sigma}_k)\half}_2$. Since
    \begin{equation*}
        \widehat{\Sigma}_k\tra [\Covmat_k(Z)]\inv \widehat{\Sigma}_k =
        \diag \left( \frac{\sigma_i^2 - \sigma_{k+1}^2}{\sigma_i^{4q+2}} \right), \quad i = 1, \dots, k,
    \end{equation*}
    we introduce the map $\psi : x \mapsto (x - \sigma_{k+1}^2) / x^{2q+1}$ for $x \geq \sigma_{k+1} > 0$ for $q \in \N^{\star}$. A simple calculation shows that the extremum of $\psi$ is reached for $x = \sigma_{k+1}^2 (1+1/(2q))$. Hence we deduce
    \begin{equation*}
        \norm{(\widehat{\Sigma}_k\tra [\Covmat_k(Z)]\inv \widehat{\Sigma}_k)\half}_2 :=
         \frac{\sqrt{1 - \gamma_\ell^2}}{\sigma_\ell^{2q}},
    \end{equation*}
    with $\ell$ defined in~\eqref{eq:index:l}. The other quantities arising in $\widehat{c}_k$ can be obtained straightforwardly.
\end{proof}

To the best of our knowledge, we note that the error bounds (\ref{thm:3:rsvd:bound}),  (\ref{thm:4:rsvd:bound}) and (\ref{thm:5:rsvd:bound}) provided in Corollaries~\ref{thm:3:rsvd},~\ref{thm:4:rsvd} and~\ref{thm:5:rsvd} are new. In Section~\ref{sec:numerical:expect_comparison:RSVD}, we provide numerical illustrations to show the relevance of these bounds in the context of the Randomized Singular Value Decomposition.

\section{Numerical illustrations} \label{sec:4}

In this section, we aim at illustrating the numerical behaviour of the error bounds introduced in Sections~\ref{sec:2} and~\ref{sec:3}, respectively. We first show the tightness of the error bounds compared to empirical experimental errors in Section~\ref{sec:numerical:proba_and_expect}. Then, we propose a detailed comparison with the state-of-the-art error bounds~\cite{HalkoMartinssonEtAl_FindingStructureRandomness_2011} in Sections~\ref{sec:numerical:expect_comparison} and~\ref{sec:numerical:expect_comparison:RSVD}, respectively. In the following, we consider a square matrix $A$ (with $n=m=1000$) obtained as follows. First, we select two orthogonal matrices $U\in \Rnn$ and $V \in \Rnn$. Each matrix is obtained independently by drawing a random standard Gaussian matrix and taking its $QR$ factorization. Then, given
\begin{equation*}
    \Sigma = \diag ( \underbrace{1, \dots, 1}_{10 \mbox{~times}},~2\halfinv,~3\halfinv,~\dots,~(n - 9)\halfinv ) \in \Rnn,
\end{equation*}
we conduct all the numerical experiments with $A = U \Sigma V\tra$. This test case is directly inspired from~\cite{SaibabaHartEtAl_RandomizedAlgorithmsGeneralized_2021,TroppYurtseverEtAl_PracticalSketchingAlgorithms_2017}. We consider two different target ranks $k$  ($k=5$ and $k=15$, respectively) to allow a variety of results and comments. We believe that performing such an analysis is instructional since, in practice, we are not aware of the ideal value for the target rank $k$. The data and the scripts to reproduce all the numerical results and the  resulting figures are publicly available at \url{https://github.com/a-scotto/R-SVD-Analysis}.




\subsection{Our error bounds in expectation versus the empirical error} \label{sec:numerical:proba_and_expect}

We first focus on the tightness of the error bounds in expectation, given in Theorem~\ref{thm:3} for the Frobenius norm and in Theorem~\ref{thm:4} for the spectral norm, respectively. To quantify their tightness, for a given target rank $k$ and for a fixed value of the oversampling parameter $\varrho(p) = p - k$, we compare our bounds with the  empirical mean error over $100$ samples, i.e.,
\begin{equation*}
     \frac{1}{100} \sum_{i=1}^{100} \left( \norm{[\I_n - \pi(A G_i)]A}_{2,F} - \norm{[\I_n - \pi(A G_i)]\ubar{A}_k}_{2,F} \right),
\end{equation*}
where $G_i \in \Rnp$ ($1 \leq i \leq 100$) are $100$ independent standard Gaussian matrices ($G_i \sim \mathcal{N}(0, \I_n)$). We note that, in this setting, one has $\widehat{Z} = 0$ and $\Covmat(Z) = A A\tra$.

\begin{figure}
    \centering
    \begin{subfigure}{\textwidth}
        \centering
        \includegraphics[scale=0.5]{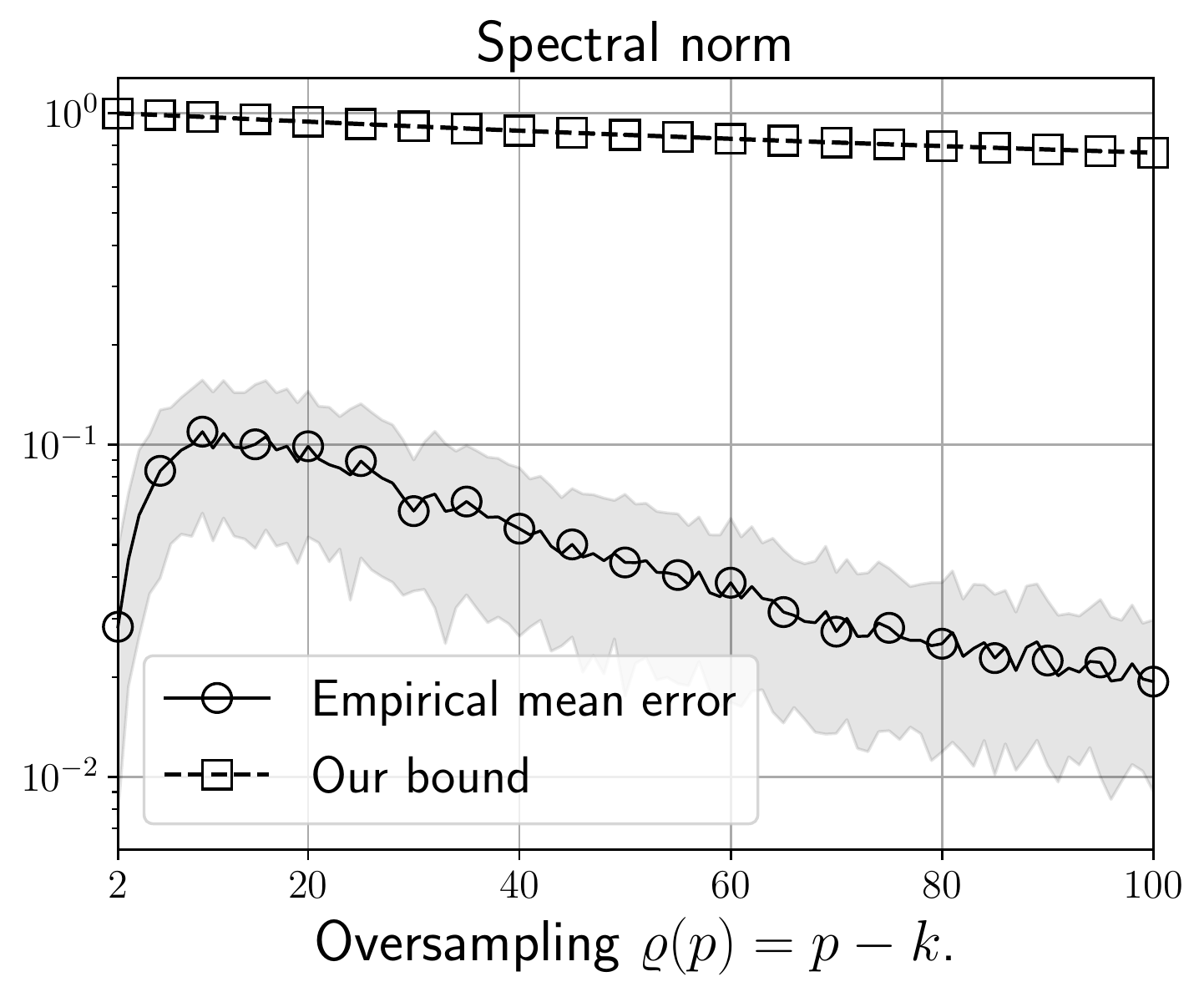} \label{fig:our_bound:spectral:5}
        \includegraphics[scale=0.5]{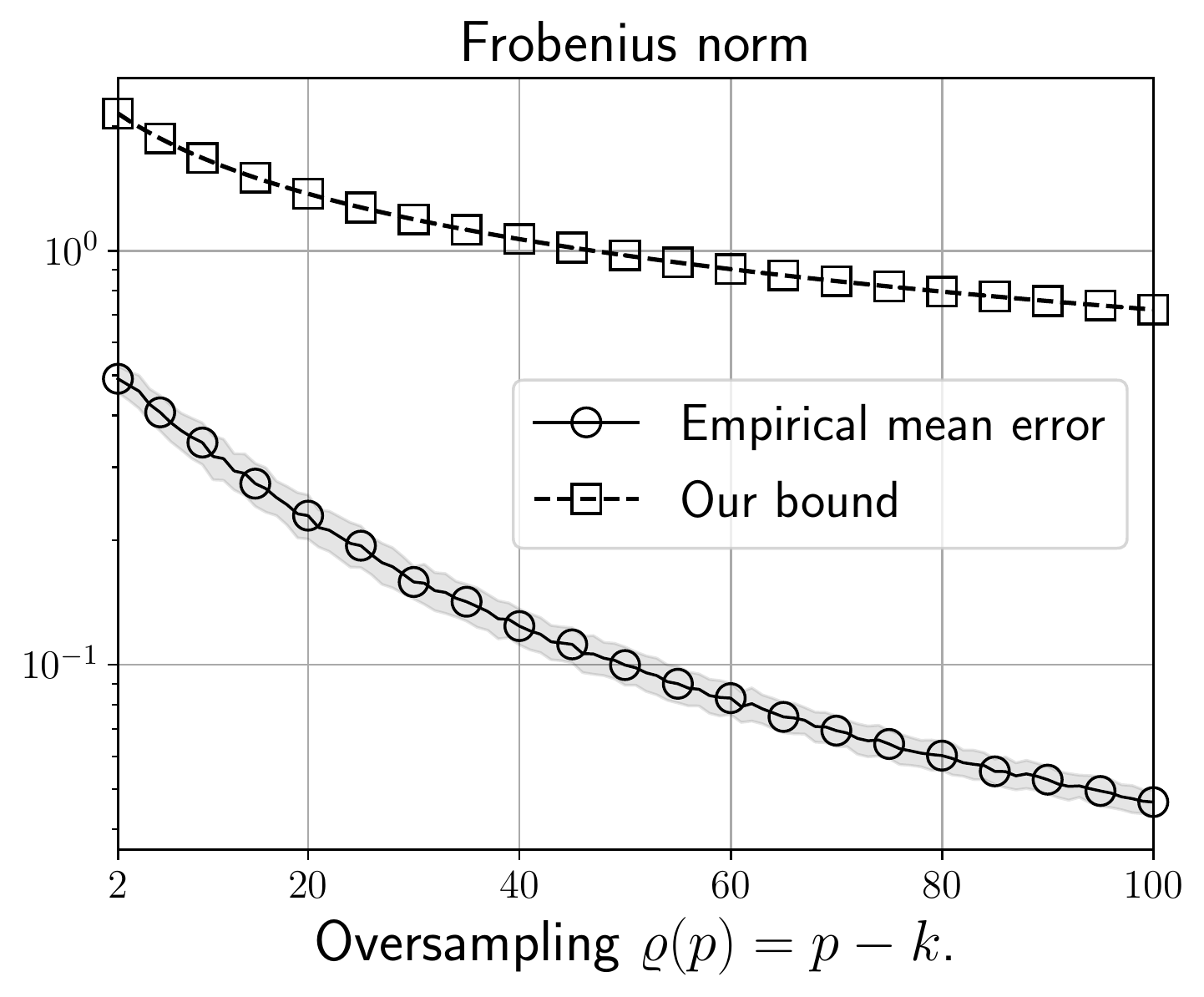}
        \caption{Results for $k=5$.}
        \label{fig:our_bound:5}
    \end{subfigure}
    \begin{subfigure}{\textwidth}
    \centering
        \includegraphics[scale=0.5]{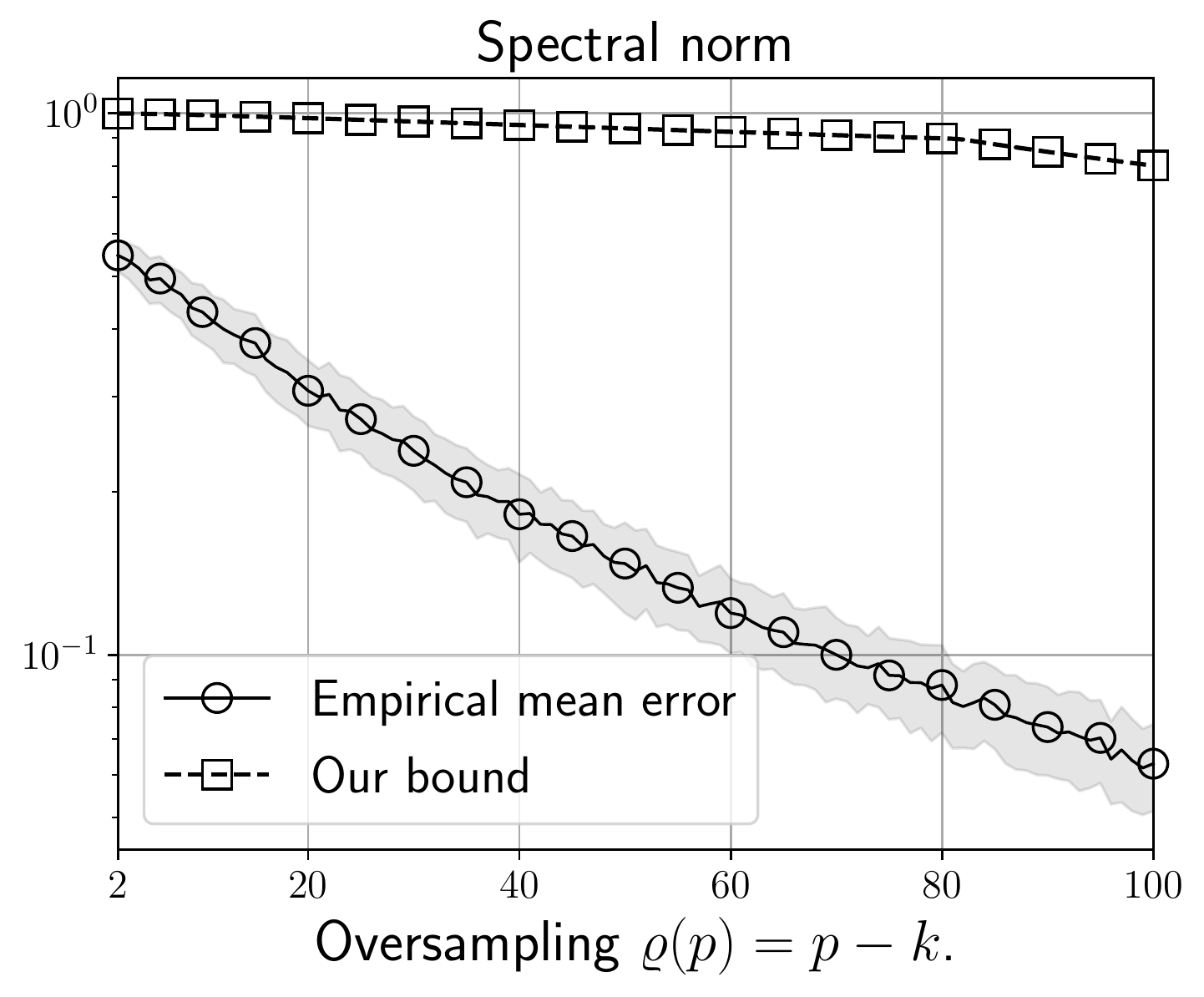}
        \includegraphics[scale=0.5]{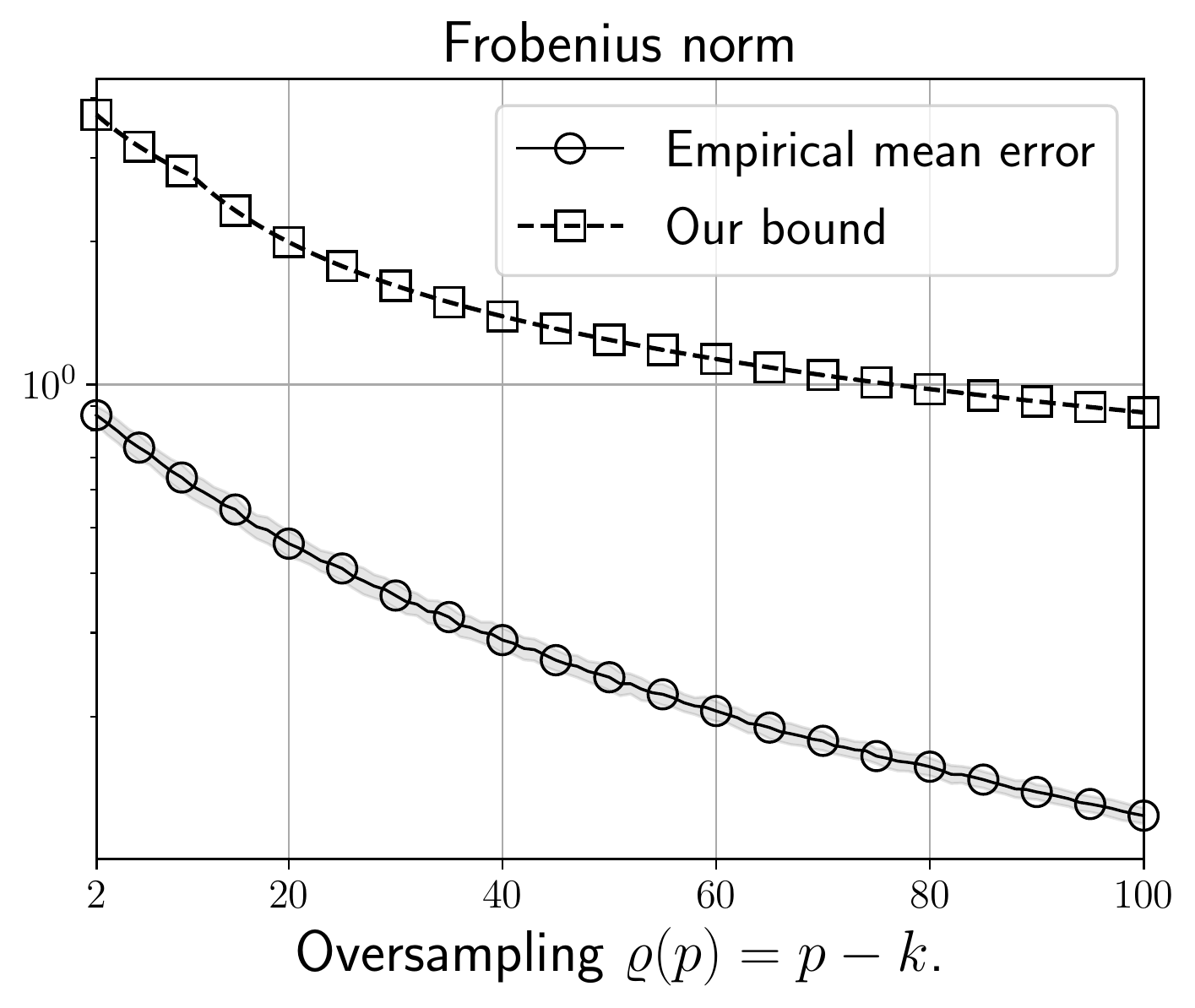}
        \caption{Results for $k=15$.}
        \label{fig:our_bound:15}
    \end{subfigure}
    \caption{Empirical mean error and our error bounds in expectation with respect to the oversampling parameter $\varrho(p) = p - k$. Case of $k=5$ (top), $k=15$ (bottom),  spectral norm (left) and Frobenius norm (right). Statistics on errors are also shown: empirical mean (circle mark) $\pm$ one standard deviation (grey area).}
    \label{fig:expect_proba}
\end{figure}

With respect to the target rank $k$, Figure~\ref{fig:expect_proba} shows the  empirical error as well as our error bounds (in both norms) with respect to the oversampling parameter $\varrho(p)$ for both target ranks $k=5$ and $k=15$, respectively. We remark that, as the number of samples increases, our bounds predict a smaller error in both norms. We also note that the error bounds in expectation in the Frobenius norm are relatively accurate compared to the spectral case. In fact, the decrease rate seems to be slower in the spectral norm compared to the Frobenius norm. This is indeed expected since our error bounds in the spectral norm have been obtained in a looser way compared to the Frobenius norm.

\begin{figure}
    \centering
    \begin{subfigure}{\textwidth}
        \centering
        \includegraphics[scale=0.5]{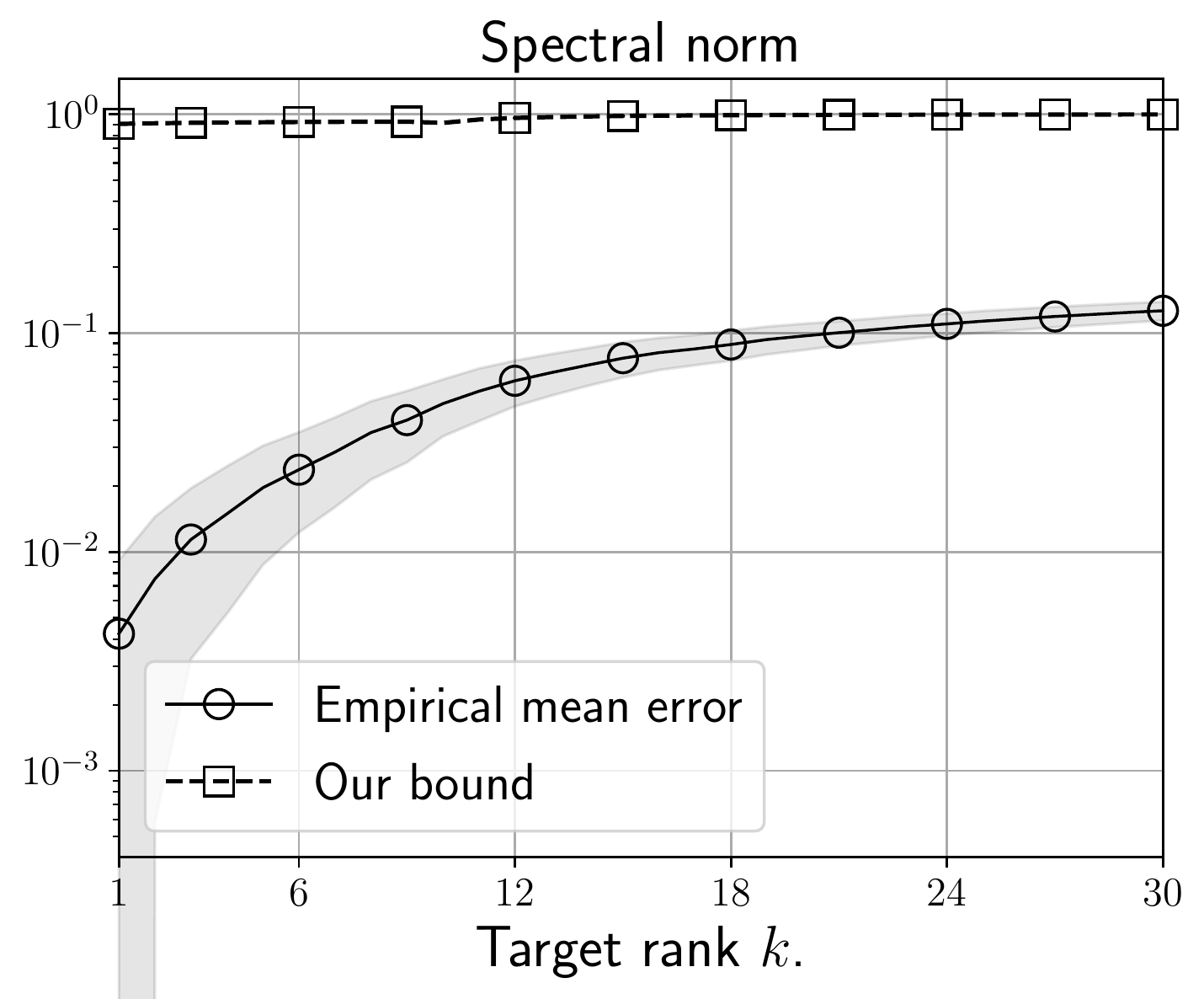}\label{fig:our_bound:spectral:p32}
        \includegraphics[scale=0.5]{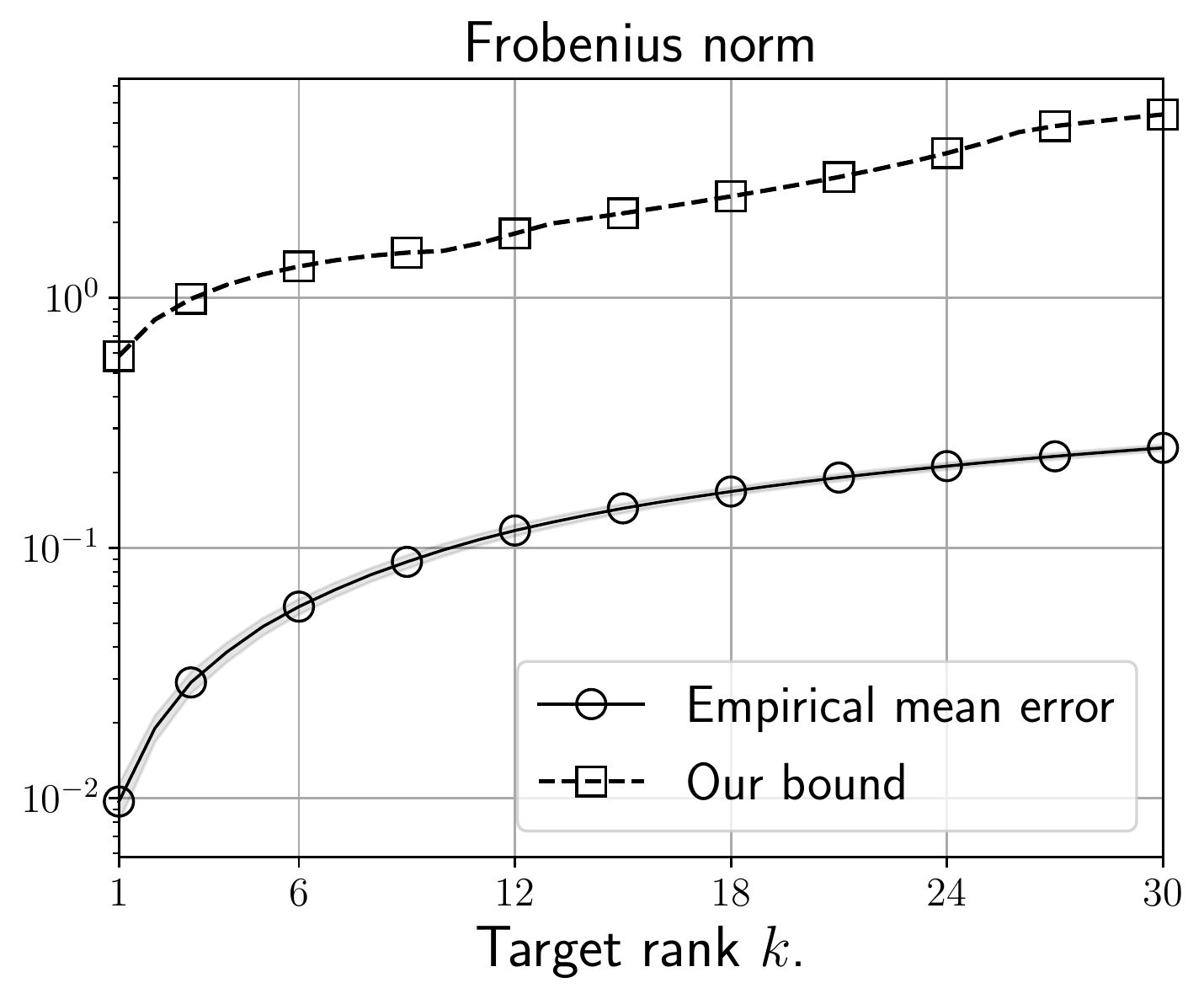}
        \caption{Results for $p=32$.}
        \label{fig:expect_proba_versus:p32}
    \end{subfigure}
    \begin{subfigure}{\textwidth}
        \centering
        \includegraphics[scale=0.5]{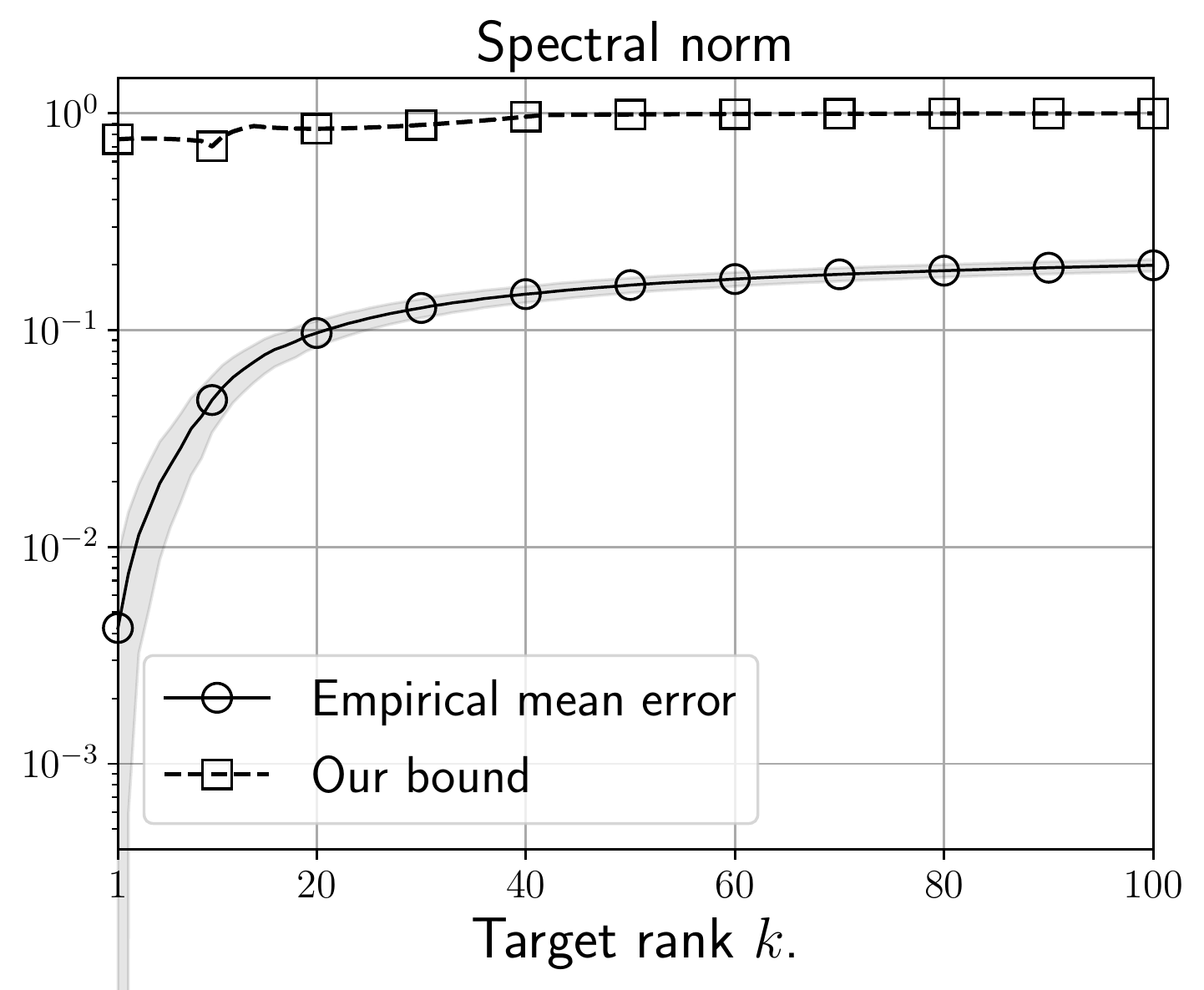}
        \includegraphics[scale=0.5]{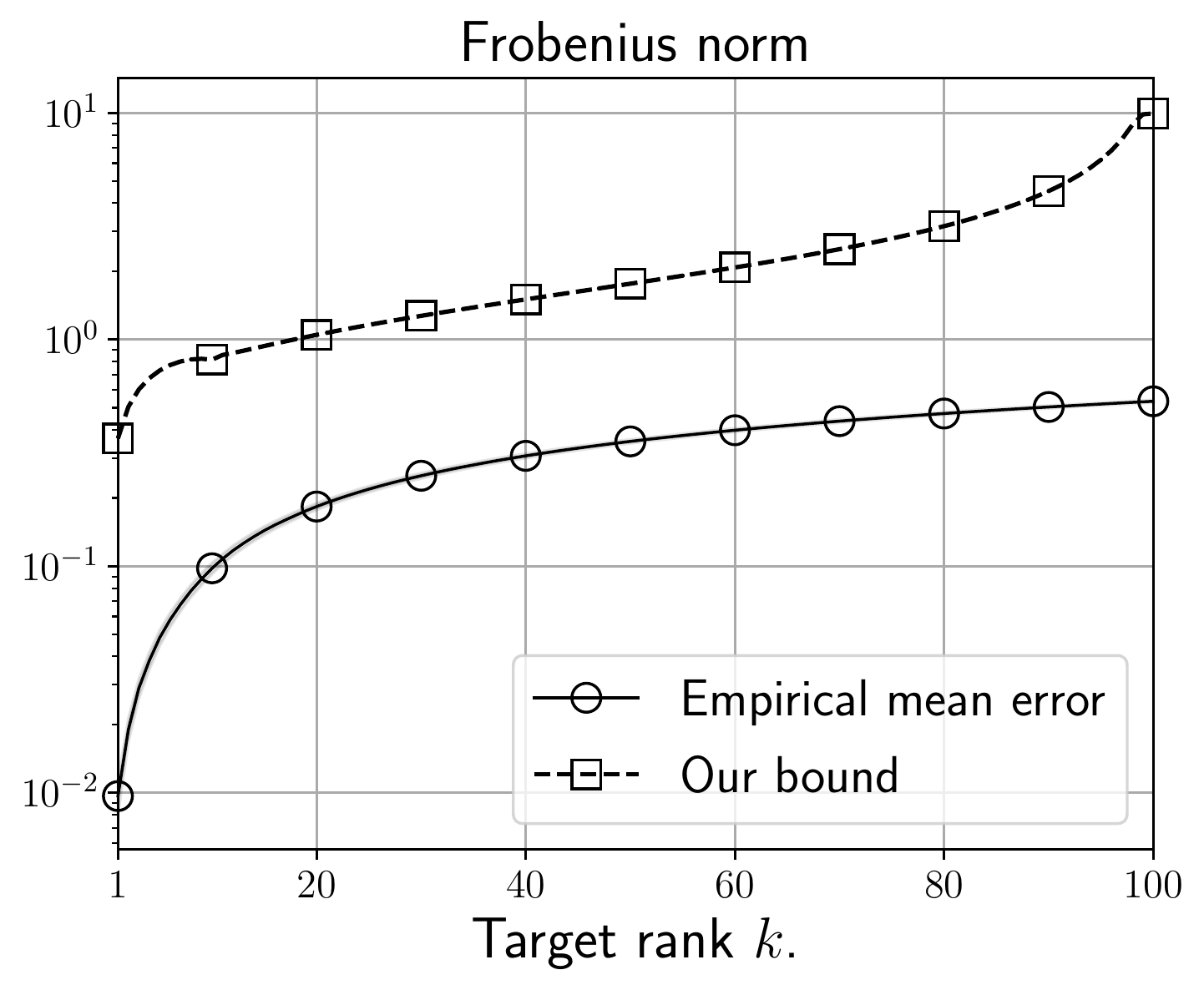}
        \caption{Results for $p=102$.}
        \label{fig:expect_proba_versus:p102}
    \end{subfigure}
    \caption{Empirical mean error and our error bounds in expectation with respect to the target rank $k$. Cases of $p=32$ (top) and of $p=102$ (bottom), spectral norm (left) and Frobenius norm (right). Statistics on errors are also shown: empirical mean (circle mark) $\pm$ one standard deviation (grey area).}
    \label{fig:expect_proba_versus_k}
\end{figure}

With respect to the oversampling parameter $\varrho(p)$, Figure~\ref{fig:expect_proba_versus_k} shows the empirical error and our error bounds with respect to the target rank $k$ for two fixed values of the sample parameter ($p=32$ and $p=102$, respectively). As expected, the error bounds are found to be more accurate for a lower target rank. In the spectral norm, for small target ranks, the gap between  the empirical mean error and our error bound does not seem to be as tight as in the Frobenius norm. We also note that by increasing the value of $p$, our bound reaches a smaller value for the same target rank $k$. For instance, for a target rank of order $20$, our bound is of order $10^0$ for $p=102$, while it exceeds $5\times 10^0$ in the case of $p=32$.


\subsection{Our error bounds in expectation versus the state-of-the-art} \label{sec:numerical:expect_comparison}

We now compare our error bounds with respect to the reference error bounds~\cite[Theorems 10.5 and 10.6]{HalkoMartinssonEtAl_FindingStructureRandomness_2011}. We stress that various reference error bounds exist in the literature but they can be mostly considered as adaptations\footnote{A noticeable exception is the more involved analysis proposed in~\cite{Gu_SubspaceIterationRandomization_2015}, which investigates the quality of the singular value approximation and of the randomized low-rank approximation to a given matrix (see Theorem 5.7 therein, notably). To unify our presentation, we have therefore decided to restrict the comparison to the reference bounds proposed in~\cite{HalkoMartinssonEtAl_FindingStructureRandomness_2011}.} of the error bounds proposed by Halko, Martinsson and Tropp~\cite{HalkoMartinssonEtAl_FindingStructureRandomness_2011} in different settings, see e.g.,~\cite{BoulleTownsend_GeneralizationRandomizedSingular_2022}. 
For simplicity reasons, we later refer to the Halko, Martinsson and Tropp error bounds~\cite{HalkoMartinssonEtAl_FindingStructureRandomness_2011} by  ``\texttt{HMT~bound}". We note that the ``\texttt{HMT bound}" corresponds to an upper bound for the following expected error quantity
\begin{equation} \label{eq:old_metric}
    \expect{\norm{[\I_n - \pi(Z)]A}_{2,F} - \norm{\ubar{A}_k}_{2,F}},
\end{equation}
where $Z = AG$ with $G$ drawn following a standard Gaussian distribution ($G \sim \mathcal{N}(0, \I_n)$).

In this case, since $\norm{[\I_n - \pi(Z)]\ubar{A}_k}_{2,F} \le \norm{\ubar{A}_k}_{2,F}$, we deduce that the error bounds given in Theorems~\ref{thm:3} and~\ref{thm:4} can be also  considered as upper bounds of the error quantity~\eqref{eq:old_metric} with $\Covmat(Z) = A A\tra$. In the spectral norm case, when we consider the error quantity~\eqref{eq:old_metric}, we have been able to derive an improved error bound in Theorem~\ref{thm:5}. We refer to such a bound as ``\texttt{Improved bound}" in the numerical experiments.

\begin{figure}
    \centering
    \begin{subfigure}{\textwidth}
        \centering
        \includegraphics[scale=0.5]{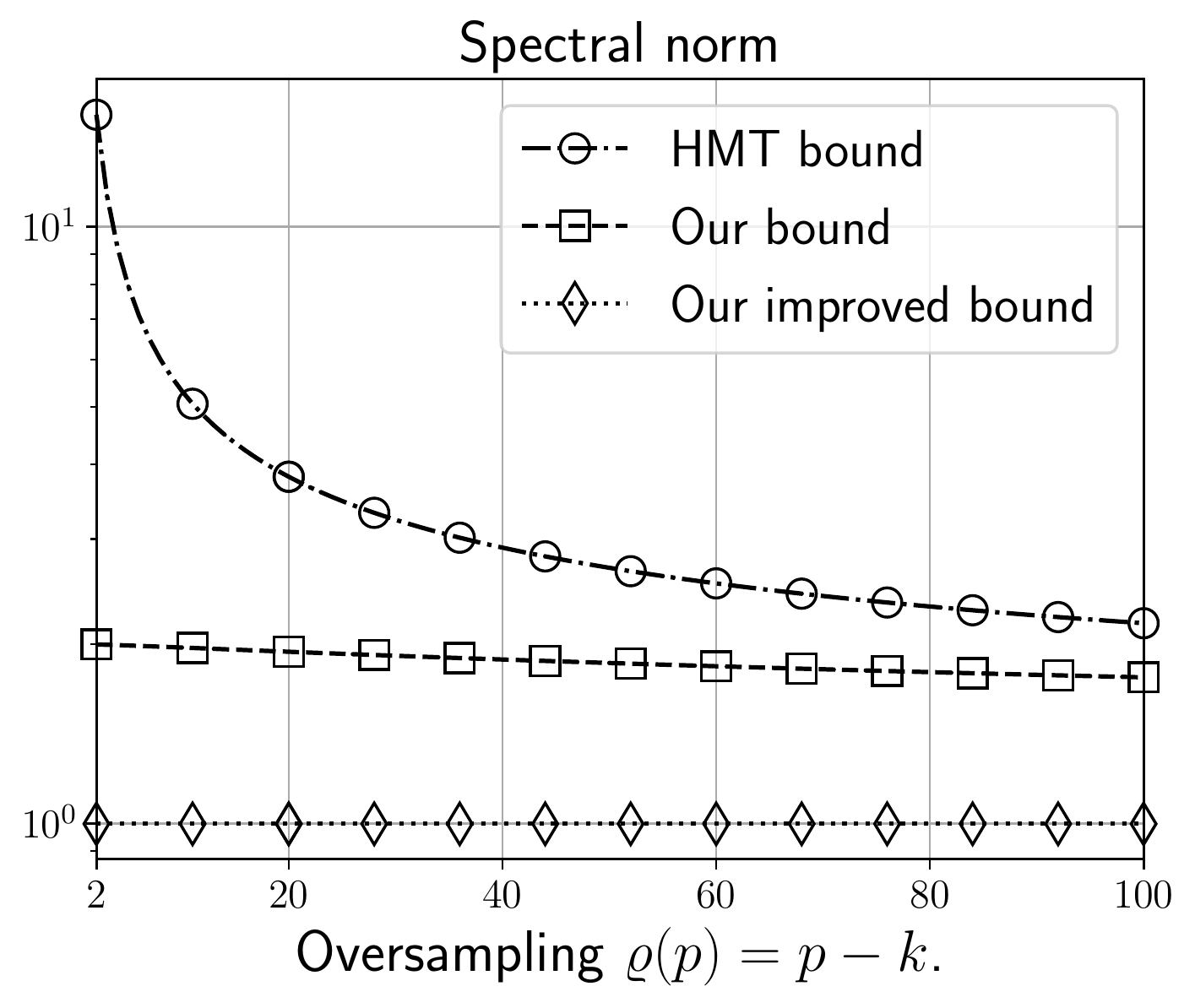}
        \includegraphics[scale=0.5]{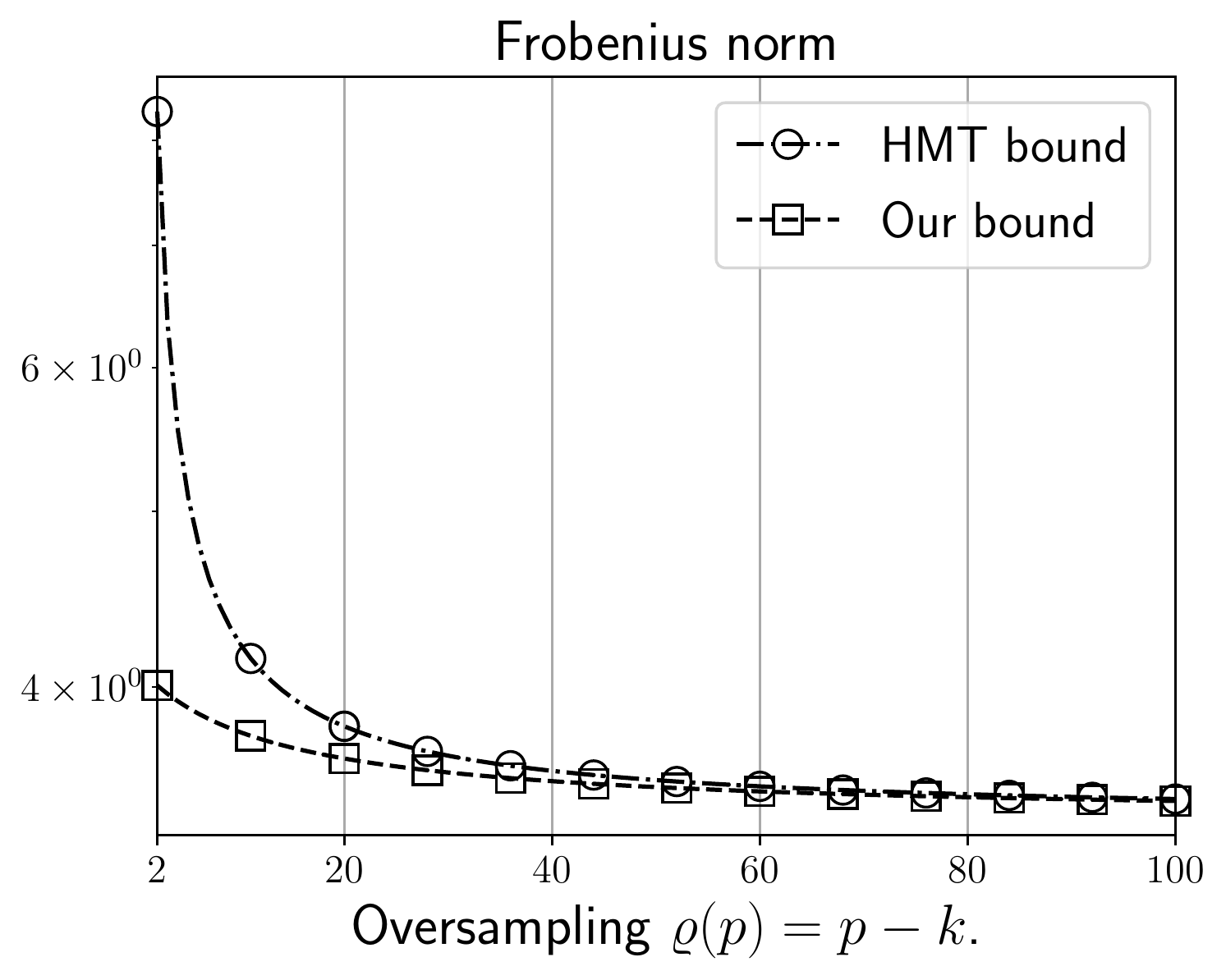}
        \caption{Results for $k=5$.}
        \label{fig:comparison_hmt:5}
    \end{subfigure}
    \begin{subfigure}{\textwidth}
    \centering
        \includegraphics[scale=0.5]{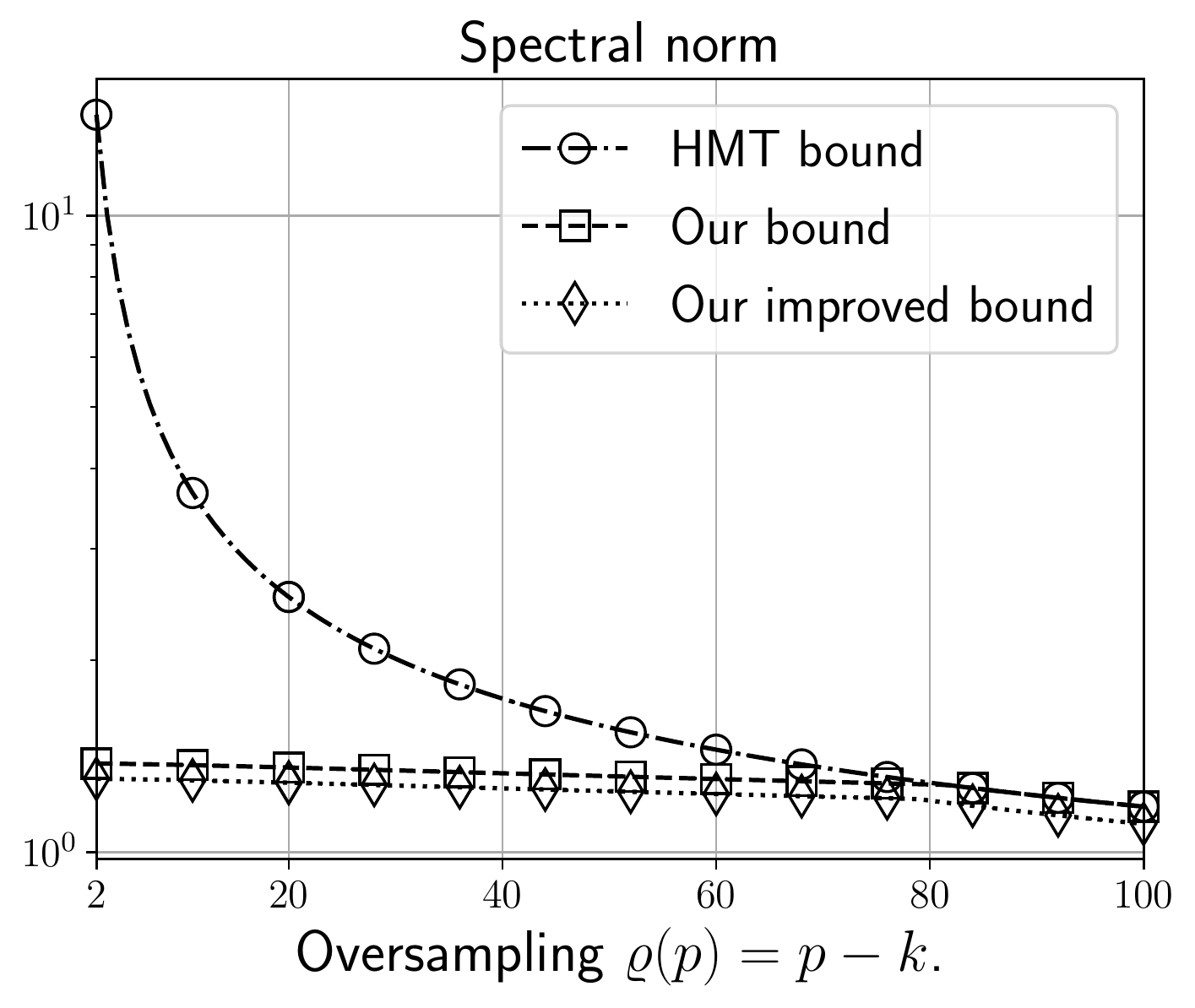}
        \includegraphics[scale=0.5]{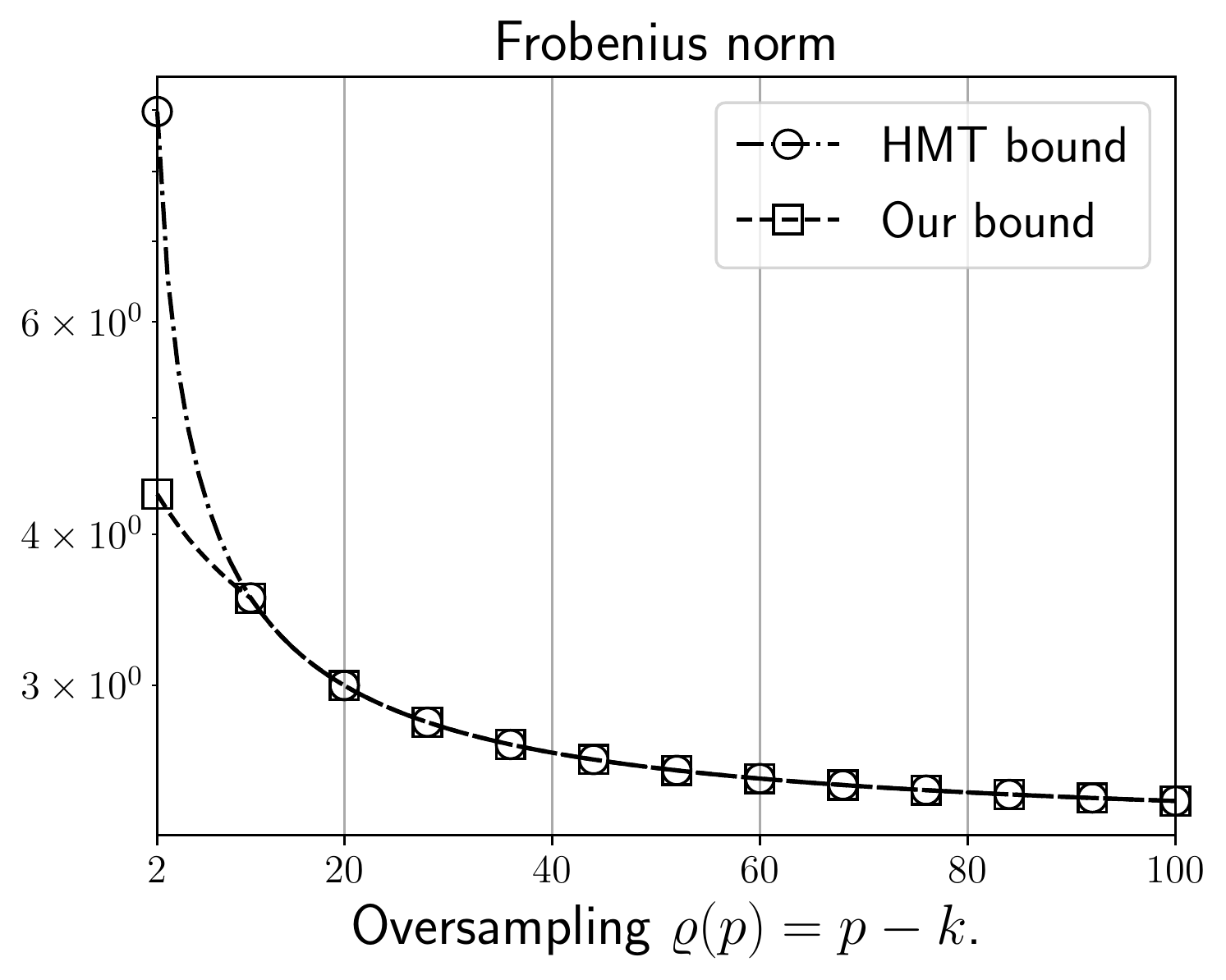}
        \caption{Results for $k=15$.}
        \label{fig:comparison_hmt:15}
    \end{subfigure}
    \caption{Comparison of different bounds for the error quantity~\eqref{eq:old_metric} using two different target ranks $k=5$ (top) and $k=15$ (bottom), spectral norm (left) and Frobenius norm (right).}
    \label{fig:comparison_hmt}
\end{figure}

Figure~\ref{fig:comparison_hmt} shows a comparison of the three error bounds for the error quantity~\eqref{eq:old_metric} using two different target ranks for $k$ in both norms. In the spectral norm case, for $k=5$, ``\texttt{Our improved bound}" outperforms by far the other error bounds (i.e., ``\texttt{HMT bound}" and ``\texttt{Our bound}"). The ``\texttt{HMT bound}" is in particular found to be  very loose for small values of the oversampling parameter $\varrho(p)$. When the target rank $k$ is getting larger (i.e., $k=15$), ``\texttt{Our bound}" and ``\texttt{Our improved bound}" behave similarly (with a slight advantage for ``\texttt{Our improved bound}"). Also, for $\varrho(p) \approx 80$, we observe a break in the curves related to both bounds. This is mainly due to a change in the minimum in our bounds. The ``\texttt{HMT bound}" is very loose for small value of $p$, but as far as the sample parameter gets larger, the bound gets close to our two error bounds.

In the Frobenius norm case, as shown in Figure~\ref{fig:comparison_hmt}, we again  notice the clear benefit of ``\texttt{Our bound}" which, compared to  ``\texttt{HMT bound}", is leading to a moderate overestimation of the error, in particular for small values of the oversampling parameter. The asymptotic behaviour of the error bounds is consistent with the fact that, for a large value of $p$, ``\texttt{Our bound}" reduces to the ``\texttt{HMT bound}" in this norm. For further comparison, we have also plotted the behaviour of the error bounds proposed by Boullé and Townsend (``\texttt{BT bound}")~\cite[Proposition 6]{BoulleTownsend_GeneralizationRandomizedSingular_2022} using $K = I_n$ in their general setting.

\subsection{Error bounds for the Randomized Singular Value Decomposition}
\label{sec:numerical:expect_comparison:RSVD}
Finally, we illustrate the relevance of our error bounds in the context of the Randomized Singular Value Decomposition.  
We consider the quantity of interest (\ref{eq:old_metric}) now with $Z = A_q G$, with $A_q = (A A\tra)^q A$ for a given $q \in \N$ and $G$ drawn following a standard Gaussian distribution ($G \sim \mathcal{N}(0, \I_n)$). We later consider the single-pass case (i.e., $q=0$) and the power scheme for two different values of the iteration (i.e., $q=1$ and $q=2$).

We denote the error bound~\cite[Corollary 10.10]{HalkoMartinssonEtAl_FindingStructureRandomness_2011} by ``\texttt{HMT bound}" in this section. To the best of our knowledge, in the context of the power iteration scheme for the Randomized Singular Value Decomposition, the ``\texttt{HMT bound}" has been derived only in the spectral norm. We note that we have been able to provide an error analysis for both the Frobenius and the spectral norms. Hence, we later denote the error bound given in Corollary~\ref{thm:3:rsvd} by ``\texttt{Our bound}".  In the spectral norm case, only ``\texttt{Our improved bound}" given in Corollary~\ref{thm:5:rsvd} is considered in the following, since it is found to outperform ``\texttt{Our bound}".

\begin{figure}
    \centering
    \begin{subfigure}{\textwidth}
        \includegraphics[scale=0.5]{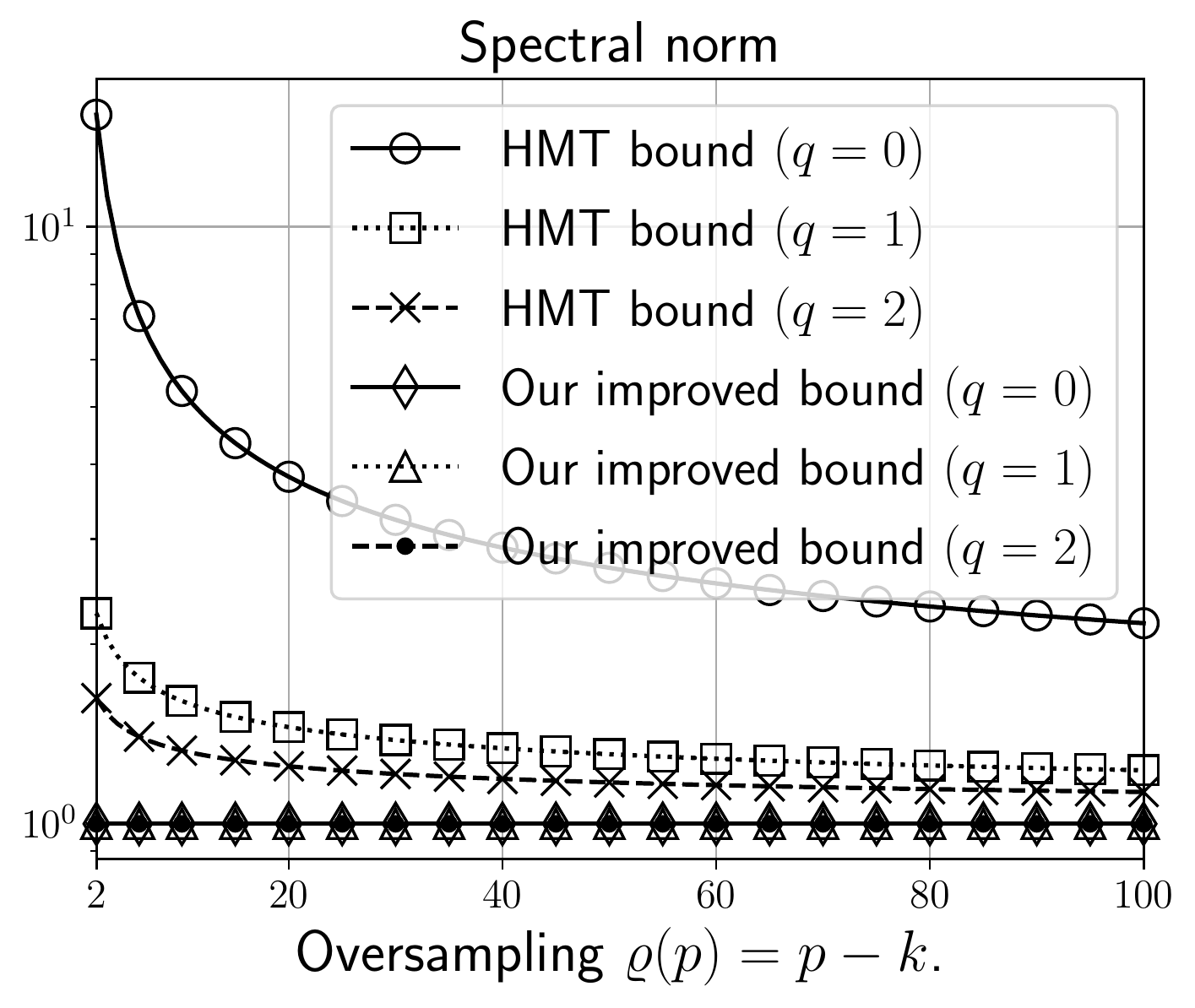}
        \includegraphics[scale=0.5]{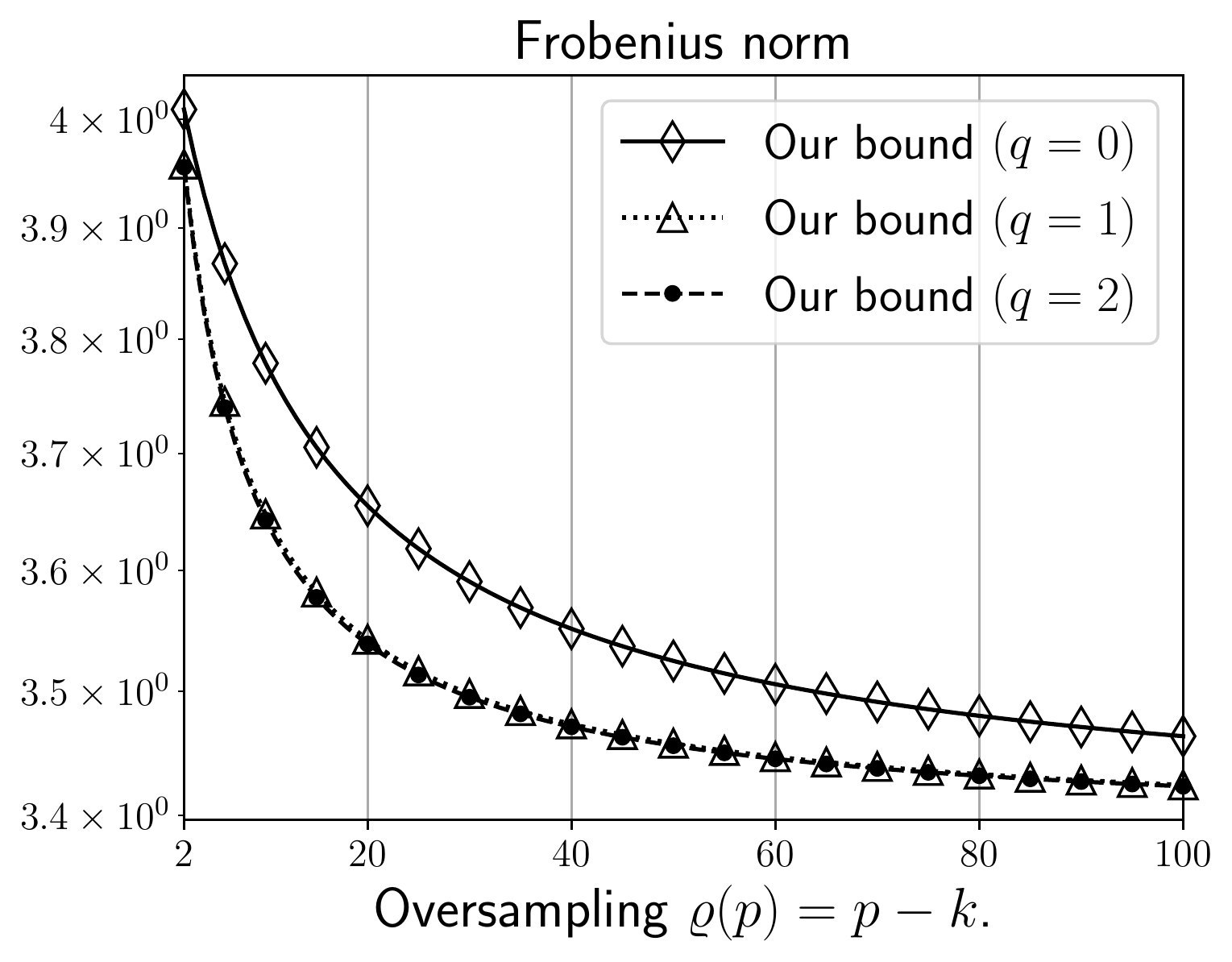}
        \caption{Results for $k=5$.}
        \label{fig:power_iteration:5}
    \end{subfigure}
    \begin{subfigure}{\textwidth}
        \includegraphics[scale=0.5]{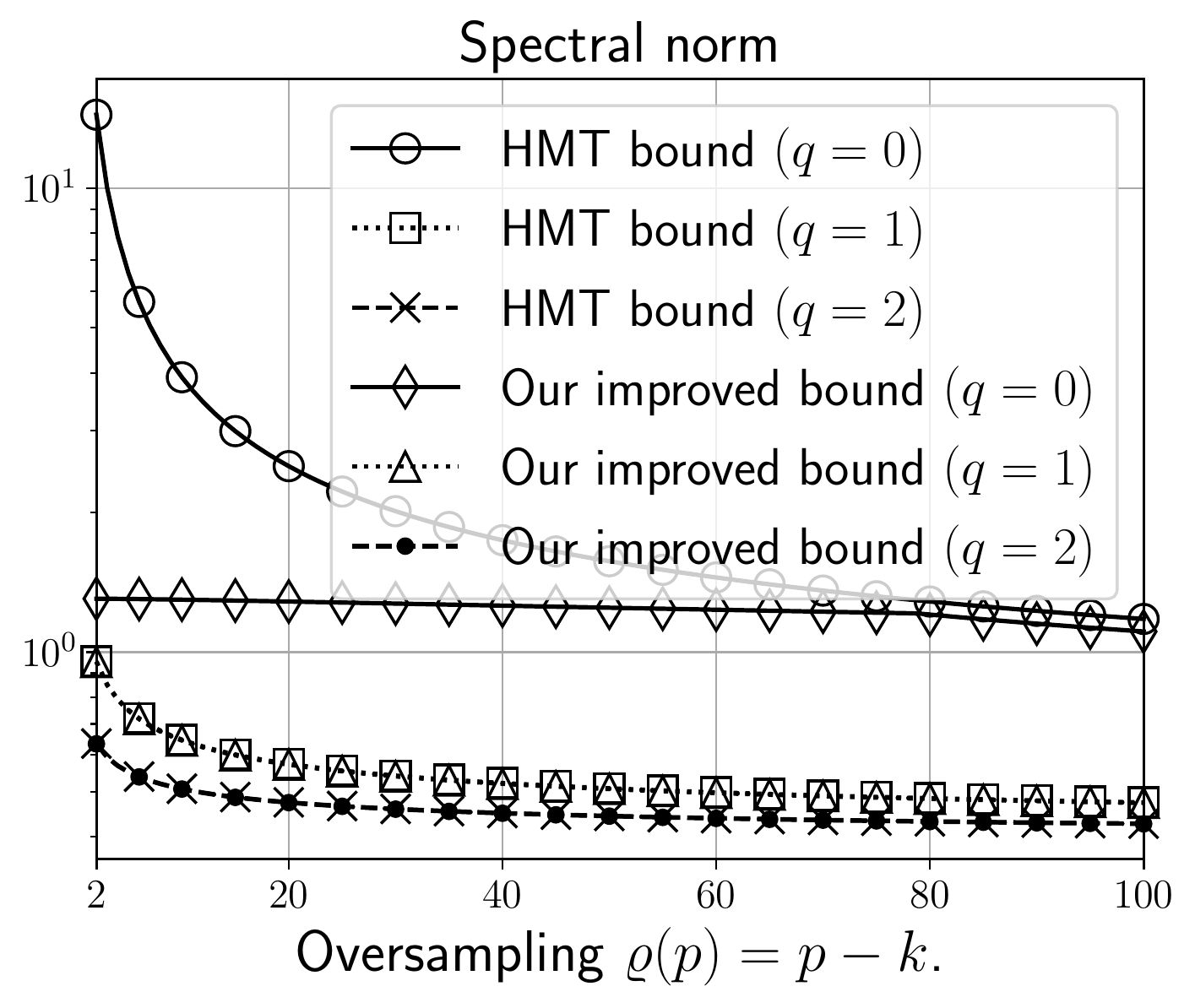}
        \includegraphics[scale=0.5]{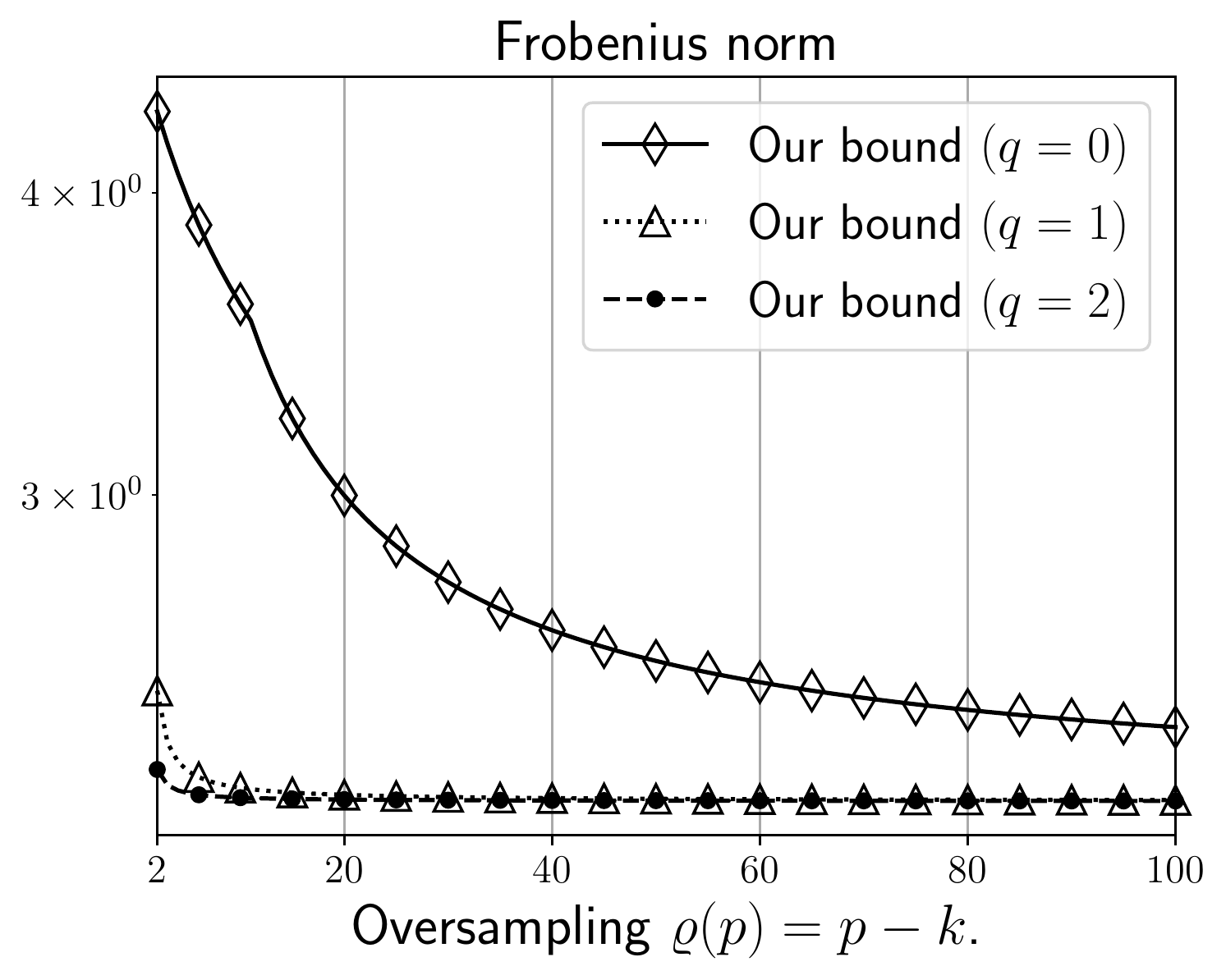}
        \caption{Results for $k=15$.}
        \label{fig:power_iteration:15}
    \end{subfigure}
    \caption{Randomized Singular Value Decomposition: comparison of different bounds for the error quantity~\eqref{eq:old_metric} for two different target ranks $k=5$ (top) and $k=15$ (bottom), spectral norm (left) and Frobenius norm (right).}
    \label{fig:power_iteration}
    \vspace{-.3cm}
\end{figure}

Figure~\ref{fig:power_iteration} shows the comparison between the different bounds related to the error quantity~\eqref{eq:old_metric}, in the context of the Randomized Singular Value Decomposition. First, as expected, we clearly identify that increasing the value of $q$ in the power iteration scheme does lead to  a strong improvement in the tightness of all the error bounds. In fact, while the error bounds are above $10^0$ when $q=0$, they become much smaller when $q \geq 1$. In the spectral norm case, the advantage of ``\texttt{Our improved bound}" over the ``\texttt{HMT bound}" is clear. We also note that for a large value of the target rank (i.e., $k=15$), the gap between the two error bounds gets smaller when $q\ge 1$. In the Frobenius norm case, we observe that the power iteration scheme is indeed very profitable, in particular for the large rank case, i.e., $k=15$. A value of $q=1$ seems to be sufficient to get optimal error bounds, since the convergence towards the optimal value $\norm{\ubar{A}_k}_F$ is almost immediate. In terms of computational cost, our numerical experiments suggest that performing only $q=1$ iteration is a very satisfactory trade-off in this test case.

\section{Conclusions} \label{sec:5}

We have analyzed theoretically the low-rank approximation to a given matrix in both the spectral and Frobenius norms. First, we have derived in Theorems~\ref{thm:1} and~\ref{thm:2} deterministic error bounds that hold with some minimal assumptions. Second, we have derived error bounds in expectation in the non-standard Gaussian case, assuming a  non-trivial mean and a general covariance matrix for the random matrix variable (Theorems~\ref{thm:3},~\ref{thm:4} and~\ref{thm:5}). This analysis generalizes and improves the error bounds proposed in~\cite{HalkoMartinssonEtAl_FindingStructureRandomness_2011}. Then, we have applied our analysis to the Randomized Singular Value Decomposition and have deduced the related error bounds in expectation (Corollaries~\ref{thm:3:rsvd},~\ref{thm:4:rsvd} and~\ref{thm:5:rsvd}). Numerical experiments on a synthetic test case have shown the tightness of the new error bounds.

In a near future, we plan to derive the probabilistic error bounds related to the error analysis proposed in Section~\ref{sec:2} and specialize them to the case of algorithms based on randomized subspace iterations. More specifically,  we aim at extending our analysis to the Generalized Singular Value Decomposition~\cite{PaigeSaunders_GeneralizedSingularValue_1981, VanLoan_GeneralizingSingularValue_1976} and to the generalized Nyström method~\cite{Nakatsukasa_FastStableRandomized_2020}, due to possible applications in weighted inverse problems, uncertainty quantification and model reduction to name a few. In all these applications, exploiting prior information on the singular vectors of $A$ to define the covariance matrix $\Covmat(Z)$ is expected to be beneficial for the accuracy of the low-rank approximation.

\bibliographystyle{siamplain}
\bibliography{bibliography}

\end{document}